\newcommand{\define}{\textbf}
\newcommand{\excise}[1]{}
\newcommand{\isom}{\cong}
\renewcommand{\setminus}{\smallsetminus}
\renewcommand{\phi}{\varphi}
\newcommand{\exterior}{\textstyle\bigwedge}
\renewcommand{\tilde}{\widetilde}
\renewcommand{\bar}{\overline}
\newcommand{\CC}{\mathbb{C}}
\newcommand{\ZZ}{\mathbb{Z}}
\newcommand{\PP}{\mathbb{P}}
\renewcommand{\AA}{\mathbb{A}}
\newcommand{\OO}{\mathcal{O}}
\newcommand{\cM}{\mathcal{M}}
\newcommand{\shfF}{\mathscr{F}}
\newcommand{\Fl}{\mathbf{Fl}}
\newcommand{\Gr}{\mathbf{Gr}}
\newcommand{\cQ}{\mathcal{Q}}
\newcommand{\pt}{\mathrm{pt}}
\newcommand{\id}{\mathrm{id}}
\newcommand{\bk}{\mathbf{k}}
\newcommand{\bp}{\mathbf{p}}
\newcommand{\bq}{\mathbf{q}}
\newcommand{\triple}{{\bm{\tau}}}
\newcommand{\eu}{\varepsilon}
\newcommand{\Eta}{\mathrm{H}}
\newcommand{\opK}{\mathrm{op}K}
\newcommand{\tor}{T\hspace{-.6ex}or}
\DeclareMathOperator{\rk}{rk}
\DeclareMathOperator{\im}{im}
\DeclareMathOperator{\Hom}{Hom}
\DeclareMathOperator{\Pf}{Pf}
\newtheorem{theorem}{Theorem}[section]
\newtheorem{lemma}[theorem]{Lemma}
\newtheorem{corollary}[theorem]{Corollary}
\newtheorem{thm}{Theorem}
\newtheorem*{lem}{Lemma}
\theoremstyle{definition}
\newtheorem{remark}[theorem]{Remark}
\newtheorem*{def*}{Definition}
\newtheorem*{ex*}{Example}
\begin{document}
%%%%%%%%%%%%%%%%%%%%%%%%%%%%%%%%%%%%%%%

%%%%%%%%%%%%%%%%%%%%%%%%%%%%%%%%%%%%%%%%%%%%%%%%%%%%%%%%%%%%%%%%%%%%%%
\title[K-theoretic Chern class formulas for degeneracy loci]{K-theoretic Chern class formulas for vexillary degeneracy loci}
\author{David Anderson}
\email{anderson.2804@math.osu.edu}
\address{Department of Mathematics, The Ohio State University, Columbus, OH 43210}
%\keywords{}
%\classification{14N15 (primary), 14C17, 14C35 (secondary)}
\date{May 16, 2019}
\thanks{The author was partially supported by NSF Grant DMS-1502201.}

\begin{abstract}
Using raising operators and geometric arguments, we establish formulas for the $K$-theory classes of degeneracy loci in classical types.  
We also find new determinantal and Pfaffian expressions for classical cases considered by Giambelli: the loci where a generic matrix drops rank, and where a generic symmetric or skew-symmetric matrix drops rank.

In an appendix, we construct a $K$-theoretic Euler class for even-rank vector bundles with quadratic form, refining the Chow-theoretic class introduced by Edidin and Graham.  We also establish a relation between top Chern classes of maximal isotropic subbundles, which is used in proving the type D degeneracy locus formulas.
\end{abstract}

\maketitle
%%%%%%%%%%%%%%%%%%%%%%%%%%%%%%%%%%%%%%%%%%%%%%%%%%%%%%%%%%%%%%%%%%%%%%%

%%%%%%%%%%%%%%%%%%%%%%%%%%%%%%%%%%%%%%%%
\section*{Introduction}%\label{s.intro}
%%%%%%%%%%%%%%%%%%%%%%%%%%%%%%%%%%%%%%%%

Given a map of vector bundles $\phi\colon E_p \to F_q$ on a variety $X$ (with subscripts indicating ranks), an old and natural problem is to determine the class of the {\em degeneracy locus} $\Omega_r \subseteq X$ where $\rk\phi\leq r$.  The prototypical setting is where $X$ is projective space and the vector bundles are sums of line bundles: then $\phi$ is represented as a $q\times p$ matrix of homogeneous forms, and one asks for the class of the locus where this matrix drops rank.  In this language, the problem was considered by nineteenth-century mathematicians.  (Cayley wrote about the case where $p=q+1$ and $r=q-1$; Salmon and Roberts treated the general case of sub-maximal rank.)  It was solved by Giambelli in 1904, who gave the answer in the form of a determinant in the homogeneous degrees of the entries of the matrix.

The modern version of this problem asks for formulas in terms of Chern classes.  Porteous's formula expresses Giambelli's determinant in the general setting of cohomology classes of degeneracy loci, and this was extended by Kempf and Laksov to loci that include all Schubert varieties in a Grassmann bundle \cite{kl}.  A more detailed history can be found in Fulton \cite{f-flags}, who also gives a further generalization of the determinantal formula to {\em vexillary} Schubert varieties in flag bundles.

Perhaps surprisingly, analogous determinantal formulas for the $K$-theory classes of degeneracy loci (i.e., their structure sheaves) were obtained only very recently.  To my knowledge, the first explicitly stated determinant appears in the remarkable paper of Hudson, Ikeda, Matsumura, and Naruse \cite{himn}.  (Some precursors can be found in work of Lascoux \cite{lascoux,l2}, Lenart \cite{lenart}, Sam \cite{sam}, and Kirillov \cite{kirillov}, as well as in the Porteous and Jacobi-Trudi formulas of Buch \cite{buch}.)

For symplectic degeneracy loci---or, locally, symmetric matrices---the story is somewhat parallel, but its culmination has been reached only very recently.  Here one has a vector bundle $V$ of even rank, equipped with a symplectic form, along with isotropic subbundles $E$ and $F$, and the problem is to express the class of the locus where $\dim(E \cap F)\geq k$.  Again, Giambelli (in 1906) was one of the first to give a general formula, expressed as a determinant.  Other determinantal formulas for cohomology classes of symplectic degeneracy loci were found in the late twentieth century, but in hindsight these were coincidences: as Pragacz discovered, the formulas are better expressed as {\em Pfaffians} (which happen to equal determinants in the special cases considered by Giambelli and others).  Pragacz also brought the formalism of Young's {\em raising operators} into this subject \cite{p1,p2}.  On the geometric side, Kazarian found the most natural extension of the Kempf-Laksov argument to the symplectic (and orthogonal) setting, and gave elegant proofs of Pfaffian formulas \cite{k}.  
This technique was profoundly refined by Buch, Kresch, and Tamvakis, who introduced and studied {\it theta-} and {\it eta-polynomials} as representatives for Schubert classes in the cohomology of isotropic Grassmannians \cite{bkt1,bkt2}.  ``Double'' versions of these polynomials were defined and shown to represent equivariant Schubert classes and degeneracy loci in \cite{wilson,im,tw,af2,t1}.

The main aim of this article is to prove determinantal and Pfaffian formulas for the $K$-theory classes of degeneracy loci.  A special case of our results directly generalizes Giambelli's formula: given a map $\phi\colon E \to F$ of vector bundles on $X$, the locus $\Omega_r$ where $\rk(\phi)\leq r$ has $K$-theory class
\begin{align}\label{e.g1}
  [\OO_{\Omega_r}] = \det\left(\sum_{m\geq0} \binom{q-r+m-1}{m}\,(-1)^m\,c_{q-r+j-i+m}(F-E) \right)_{1\leq i,j\leq p-r}.
\end{align}
\noindent
Here $p$ and $q$ are the ranks of $E$ and $F$, respectively, and $c(F-E)$ is the $K$-theoretic Chern class.  The leading term, where $m=0$, is precisely Giambelli's determinantal formula in cohomology.\footnote{Using some algebraic identities, one can deduce \eqref{e.g1} from other formulas in the literature---for example, it is equivalent to a special case of \cite[(1.2)]{himn}, but their formula uses a complete filtration of $F$ by quotient bundles.  I do not know of a reference where it is clearly stated in terms of only the virtual bundle $F-E$, although \cite[Corollaire~4.5]{lascoux} is close.}  

We will prove similar formulas for vexillary degeneracy loci, incorporating the stability properties of the cohomology formulas.\footnote{In their recent article, Hudson and Matsumura also give determinantal formulas for vexillary loci in type A \cite{hm}.  They  carry out some computations in algebraic cobordism, but so far no determinantal expressions have been found in that generality.}  Our results generalize the formulas of \cite{himn} for types A, B, and C.  In general, the loci are described by conditions of the form $\dim(E_{p_i}\cap F_{q_i})\geq k_i$, and the results of \cite{himn} are recovered as the special case where $k_i=i$ and $p_i=p$ for all $i$.

We also find formulas for the more difficult type D loci.  These require the introduction of the {\em Euler class} in $K$-theory.  This is a characteristic class for even-rank vector bundles with quadratic form which extends the Chow-theoretic Euler class defined by Edidin and Graham \cite{eg}.  The key observation is that an Euler class should be defined up to unit.  (For the Chow ring, the homogeneous units are only $\{\pm 1\}$, and indeed the Edidin-Graham Euler class is defined up to sign---but in $K$-theory there are more units.)  In contrast to the Edidin-Graham Euler class, one must invert $2$ to construct the $K$-theoretic Euler class, even if one restricts attention to bundles which are Zariski-locally trivial.  The construction of this class is described in Appendix~\ref{s.euler}, along with some fundamental properties of Chern classes of maximal isotropic subbundles.

Formulas for cohomology classes are intimately related to special polynomials in algebraic combinatorics, and often the combinatorial constructions preceded the geometric ones.  This was the case for the {\it Schubert polynomials} introduced by Lascoux and Sch\"utzenberger in 1982 as representatives for type A Schubert classes.  Analogues for other types begin with Billey and Haiman's work \cite{bh}.  Ikeda, Mihalcea, and Naruse defined double versions representing equivariant classes \cite{i,in1,imn}.  In each case, there is a Schubert polynomial for each (signed) permutation; these are defined inductively by starting at a longest element and applying difference operators.  
More detailed accounts of the Schubert polynomial story can be found in \cite{af1} and \cite{t2}.

For {\em vexillary (signed) permutations}, the Schubert polynomials have determinantal or Pfaffian formulas, which are much more efficient than the inductive definition.  
There is a simple correspondence between vexillary permutations and the rank conditions defining vexillary degeneracy loci; the interested reader can find the combinatorics worked out in \cite{bl,af1,af3,jordan}.  
Since present article is focused on degeneracy loci, we mainly use the language of rank conditions rather than permutations.

As in cohomology, the $K$-theoretic degeneracy locus formulas are related to  combinatorial polynomials.  In type A, these are the {\em Grothendieck polynomials} of Lascoux and Sch\"utzenberger.  Grothendieck polynomials in other classical types have been introduced recently by Ikeda, Kirillov, and Naruse \cite{in,kn}.  As with Schubert polynomials, the Grothendieck polynomials are defined inductively for each (signed) permutation; by replacing variables with Chern classes, these polynomials can be interpreted as formulas for quite general degeneracy loci.  Conversely, by replacing Chern classes with appropriately specialized variables, the results presented here lead to determinantal and Pfaffian---or more generally, raising-operator---formulas for vexillary Grothendieck polynomials.

Since degeneracy loci are ubiquitous in algebraic geometry, applications of these determinantal and Pfaffian formulas should be abundant.  For instance, variants of the formula \eqref{e.g1} and those of \cite{himn} have been used to compute the holomorphic Euler characteristics of Brill-Noether loci in Jacobians of curves \cite{act}.  In fact, Kempf and Laksov state that their formula was motivated by applications to the study of exactly such loci, and their paper was immediately followed by a computation of the cohomology class of a Brill-Noether locus \cite{kll}.

Both the proofs and the formulas themselves simplify considerably by employing the methods of \cite{af2}.  The vexillary formulas are not merely a generalization from the Grassmannian case---including vexillary loci streamlines the argument, even for Grassmannians.  Furthermore, our formulas are written in terms of $K$-theoretic Chern classes, rather than the useful but less familiar Segre classes of \cite{himn}.  (See Remark~\ref{r.himn} for the translation between the two.)    
Perhaps more significantly, the same method leads directly to both the discovery and the proof of the type D formulas.

The essential idea of the proof is to reformulate the techniques of Kempf-Laksov and Kazarian in terms of raising operators, using only basic properties of Chern classes and pushforwards via projective bundles.  The extra flexibility gained by including the vexillary loci (instead of only Grassmannian loci) allows one to separate the geometry from the algebra.  There are four main steps.  First, one identifies a product formula for the class of a simple kind of vexillary degeneracy locus, called ``dominant''---these loci are never Grassmannian, except in trivial cases.  Second, by using the algebra of raising operators, one transforms this product into a determinant (in type A) or Pfaffian (in other types).  Third, one constructs a resolution of any vexillary degeneracy locus by a dominant one, via a tower of projective bundles; a special case of the desired formula is deduced by pushing forward the determinantal or Pfaffian formula for the dominant locus.  A final algebraic manipulation produces the general formula, using some universal relations among Chern classes in types B, C, and D.

The determinantal formula for type A is Theorem~\ref{t.mainA}, and the Pfaffian formulas for type C, B, and D are Theorems~\ref{t.mainC}, \ref{t.mainB}, and \ref{t.mainD}, respectively.  Theorems~\ref{t.thetaC}, \ref{t.thetaB}, and \ref{t.etaD} provide ``$\beta$-theta'' and ``$\beta$-eta polynomial'' formulas for loci involving conditions on coisotropic bundles; these are notationally more cumbersome than the Pfaffian formulas, but conceptually very similar.  Extensions of Giambelli's formula for symmetric and skew-symmetric matrices, parallel to \eqref{e.g1}, are stated in Corollaries~\ref{c.pf} and \ref{c.pfD}.

In a sense, the $K$-theory formulas proved here are the most general ones of their kind: a theorem of Bressler and Evens shows that Schubert classes are sensitive to the choice of desingularization in all cohomology theories beyond $K$-theory \cite{be}.  Several authors have made significant progress in understanding generalized cohomology theories of flag varieties and the related degeneracy loci (see \cite{hm1,hm2,cpz,czz} and references therein).  Formulas in this context must necessarily be of a different flavor, however.

\smallskip
\noindent
{\it Acknowledgements.}  I especially thank William Fulton for the ongoing collaboration that has led to my understanding of the determinantal formulas presented here.  I also thank Linda Chen and Nicola Tarasca for suggestions for improving the $K$-theory formulas in type A, and Anders Buch, Dan Edidin, William Graham, Eric Marberg, Brendan Pawlowski, and Richard Thomas for comments on the manuscript.  Finally, I must acknowledge my debt to the authors of \cite{himn} for their groundbreaking work on the subject, and I also thank them for remarks on earlier drafts.

Much of this work was done during the thematic program on Combinatorial Algebraic Geometry at the Fields Institute, and while visiting the University of Oslo.

\smallskip
\noindent
{\it Conventions.}  Throughout, $X$ is an irreducible variety over an algebraically closed field.  In the body of the paper, the results are expressed in {\it connective $K$-theory}, $CK^*(X)$, which is a graded algebra over $\ZZ[\beta]$ interpolating $K^\circ(X)$ and the Chow ring $A^*(X)$.  A digest of the relevant facts, along with a short development of the foundations, can be found in Appendix~\ref{s.kthy}.

I am deliberately avoiding the issue of interpreting degeneracy locus formulas.  To a first approximation, the standard constructions of intersection theory give meaning to the class $[\Omega]$; more details are given in \cite{himn} and \cite{act}.  The reader may assume $X$ is smooth to simplify matters, and see also Remark~\ref{r.interpret}.  (An expert reader may notice that I have abused notation for degeneracy locus classes if $X$ is singular: more precisely, the Chern class formulas should be transported to homology by capping with the fundamental class $[X]$.)

%%%%%%%%%%%%%%%%%%%%%%%%%%%%%%%%%%%%%%%%
\section{Type A degeneracy loci}\label{s.typeA}
%%%%%%%%%%%%%%%%%%%%%%%%%%%%%%%%%%%%%%%%

Given two flags vector bundles on $X$,
\[
  E_{p_1} \subseteq \cdots \subseteq E_{p_s} \subseteq V
\]
and
\[
  F_{q_1} \subseteq \cdots \subseteq F_{q_s} \subseteq V,
\]
with $\rk(E_{p_i}) = p_i$ and $\rk(V/F_{q_i})=q_i$, we consider degeneracy loci corresponding to \define{triples} $\triple = (\bk,\bp,\bq)$, defined by
\begin{align*}
 \Omega_\triple &= \{ x\in X\,|\, \dim(E_{p_i} \cap F_{q_i}) \geq k_i \text{ for } 1\leq i\leq s\} \\
   &=\{ x\in X\,|\, \rk(E_{p_i} \to \tilde{F}_{q_i})\leq r_i \text{ for } 1\leq i\leq s\},
\end{align*}
where $\tilde{F}_{q_i} = V/F_{q_i}$ and $r_i=p_i-k_i$.

In order for the rank conditions to be feasible and nontrivial, we must require some inequalities on the triple $\triple$.  From the setup, it is clear that $0<k_1<\cdots <k_s$, $0<p_1\leq \cdots \leq p_s$, and $q_1\geq \cdots \geq q_s>0$.  Furthermore, the sequence
\[
  \lambda_{k_i} = q_i-p_i+k_i
\]
must be weakly decreasing---so it can be extended to a partition $\lambda=\lambda(\triple)$ by setting $\lambda_k=\lambda_{k_i}$ whenever $k_{i-1}<k\leq k_i$. 
For example, if $\triple = (\bk,\bp,\bq)=(\; 1\;2\;4\, ,\; 1\;3\;6\, , \; 4\;4\;4\, \; )$, then $\lambda = (4,3,2,2)$.

Such degeneracy loci are called \define{vexillary}, because they correspond to Schubert varieties for vexillary permutations, i.e., permutations avoiding the pattern $2\;1\;4\;3$.  A recipe for writing down the vexillary permutation $w(\triple)$ associated to a triple can be found in \cite{af1}.

An equivalent point of view is as follows: start with maps of bundles,
\[
E_{p_1} \hookrightarrow E_{p_2} \hookrightarrow \cdots \hookrightarrow E_{p_s} \xrightarrow{\phi} \tilde{F}_{q_1} \twoheadrightarrow \tilde{F}_{q_2} \twoheadrightarrow \cdots \twoheadrightarrow \tilde{F}_{q_s},
\]
of ranks indicated by the subscripts, and define the locus $\Omega_\triple$ according to the second description above.  (The previous situation is recovered by setting $V=E_{p_s} \oplus \tilde{F}_{q_1}$ and considering the graph of $\phi$.)

Schubert loci in Grassmann bundles are a notable special case.  Here we start with a vector bundle $V$ on a variety $Y$, and set $X=\Gr(p,V)$.  Taking all $p_i=p$, and the single bundle $E=E_p\subset V$ to be the tautological subbundle on $X$, the locus $\Omega_\triple$ is exactly the Schubert variety $\Omega_\lambda$ as defined in \cite{himn}.

More generally, when $X=\Fl(V)$ is a (partial or complete) flag bundle, with the $E_{p_i}$ forming the tautological flag, the loci $\Omega_\triple$ coincide with Schubert loci $\Omega_{w(\triple)}$ for an associated {\em vexillary permutation}, as explained in \cite{af1}.  These are the universal cases for degeneracy locus problems, and as such they play a role in interpreting the formulas we give.

\begin{thm}\label{t.mainA}
In $CK^*(X)$, we have
\begin{align*}
  [\Omega_\triple] &= \Delta_{\lambda(\triple)}( c(1),\ldots,c(k_s); \beta ) \\
  &:=\det\left( \sum_{m\geq0} \binom{\lambda_i+m-1}{m}\beta^m\, c_{\lambda_i-i+j+m}(i) \right)_{1\leq i,j\leq k_s} ,
\end{align*}
where $c(k_i) = c(V-E_{p_i}-F_{q_i}) = c(\tilde{F}_{q_i}-E_{p_i})$ for $1\leq i\leq s$, and $c(k)=c(k_i)$ when $k_{i-1}<k\leq k_i$.
\end{thm}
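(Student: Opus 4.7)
My plan is to follow the four-step strategy announced in the introduction: (1) establish a product formula for the class of a ``dominant'' vexillary locus, (2) use raising-operator algebra to convert the product into the determinant on the right-hand side of Theorem~\ref{t.mainA}, (3) resolve an arbitrary vexillary locus by a dominant one via a tower of projective bundles and push the dominant formula forward, and (4) tidy up the result using Whitney-sum relations among Chern classes.

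A triple $\triple$ is \emph{dominant} when the rank conditions $\rk(E_{p_i}\to\tilde F_{q_i})\le r_i$ simultaneously cut out $\Omega_\triple$ Zariski-locally as the zero scheme of a regular section of a bundle $\shfF$ that splits as a direct sum of Hom-bundles built from the flags. (In the vexillary-permutation dictionary of \cite{af1}, $\triple$ is dominant iff the corresponding permutation is dominant with Lehmer code $\lambda(\triple)$.) The Koszul resolution of $\OO_{\Omega_\triple}$ then yields $[\OO_{\Omega_\triple}] = c_{\mathrm{top}}^K(\shfF)$, which a splitting-principle computation expands as an explicit product of Chern classes indexed by $\lambda(\triple)$. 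To convert the product into a determinant, introduce $\ZZ[\beta]$-linear raising operators $R_{ij}$ ($i<j$) acting on formal monomials $c_\alpha = c_{\alpha_1}(1)\,c_{\alpha_2}(2)\cdots c_{\alpha_{k_s}}(k_s)$ by $R_{ij}\cdot c_\alpha = c_{\alpha+e_i-e_j}$. A $K$-theoretic Jacobi--Trudi identity, a variant of the one in Buch \cite{buch} and \cite{himn}, asserts that applying $\prod_{i<j}(1-R_{ij})(1+\beta R_{ij})^{-1}$ to the diagonal monomial $c_{\lambda_1}(1)\cdots c_{\lambda_{k_s}}(k_s)$ produces the determinant in the statement, with the binomial factors $\binom{\lambda_i+m-1}{m}\beta^m$ arising as the geometric expansion of $(1+\beta R_{ij})^{-1}$. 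Together, Steps~1 and~2 establish the theorem in the dominant case.

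For Step~3, construct $\pi\colon Z\to X$ as an iterated projective bundle, each stage projectivizing an appropriate tautological kernel so that on $Z$ the pulled-back flag refines to a dominant triple $\tilde\triple$ with $\lambda(\tilde\triple)=\lambda(\triple)$. A standard transversality and rational-singularities argument in the Kempf--Laksov mode shows that $\pi$ restricts to a birational map $\Omega_{\tilde\triple}\to\Omega_\triple$ with $\pi_*[\OO_{\Omega_{\tilde\triple}}] = [\OO_{\Omega_\triple}]$ in $CK^*(X)$. Applying $\pi_*$ to the determinant of Step~2 on $Z$ and using the $K$-theoretic projective-bundle formula from Appendix~\ref{s.kthy} collapses each row of the determinant. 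A final algebraic manipulation using $c(V)=c(E)\cdot c(V/E)$ converts the row-dependent Chern classes that emerge into the classes $c(i)=c(\tilde F_{q_i}-E_{p_i})$ appearing in the statement.

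I expect Step~2 to be the main obstacle. In cohomology the raising-operator Jacobi--Trudi is a one-line identity, but in connective $K$-theory the additional factor $(1+\beta R_{ij})^{-1}$ must be expanded and commuted past the antisymmetrizer $(1-R_{ij})$, and its Taylor coefficients must assemble exactly into the binomials $\binom{\lambda_i+m-1}{m}\beta^m$ rather than closely related variants. The geometric Steps~1 and~3 are $K$-theoretic translations of classical Kempf--Laksov arguments whose principal difficulty is bookkeeping, and Step~4 uses only standard Chern-class manipulations.
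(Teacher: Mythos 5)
Your Step~2 contains a genuine error. The binomial factors $\binom{\lambda_i+m-1}{m}\beta^m$ appearing in the determinant entries depend on $\lambda_i$, so they cannot possibly arise from a $\lambda$-independent raising operator such as $(1+\beta R_{ij})^{-1}$: expanding that operator simply produces coefficients $(-\beta)^m$ with no reference to the partition. In the paper's argument the $\beta$-binomials appear \emph{before} any raising operators enter: they come directly from the geometric ``basic case,'' where Property~(a) of Appendix~\ref{s.kthy} gives
\[
  c_e(E\otimes L^*) = (1-\beta T)^{-e}\,c_e(E-L),
\]
so that each factor in the dominant-case product already carries an operator $(1-\beta T_i)^{-q_i}$ whose exponent is the rank $q_i=\lambda_i$. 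The raising operator needed to convert the product into the determinant is then exactly the \emph{cohomological} Vandermonde $\prod_{i<j}(1-R_{ij})$, with no $\beta$-modification; see \eqref{e.raisingA}. (This is a genuine feature of type~A: in types B, C, D the raising operator really is $\beta$-deformed, e.g.\ to $\frac{1-R_{ij}}{1+R_{ij}-\beta T_i}$, but in type~A it is not.) As written, your Step~2 both applies the wrong operator and misattributes the origin of the $\beta$-factors, so the identity you assert would not hold.

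Two smaller inaccuracies: you claim the dominant triple on the resolution $Z$ satisfies $\lambda(\tilde\triple)=\lambda(\triple)$. In the paper's construction the dominant locus on $X_s$ has $\tilde\lambda_i=q_i$, and it is precisely the pushforward along each $\PP(E_{p_i}/D_{i-1})\to X_{i-1}$, via Property~(c), that reduces the exponent from $q_i$ to $\lambda_i=q_i-p_i+i$; the partitions are not equal upstairs. Finally, your Step~4 only speaks of Whitney-sum bookkeeping, but the paper still needs a separate ``general case'' step to pass from triples with $k_i=i$ to arbitrary vexillary triples, by inflating $\triple$ to $\triple'$ with consecutive $\bk'$ and verifying that the determinant is unchanged (a row-operation argument). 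That inflation step is absent from your outline.
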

\noindent
Specializing $\beta=0$ immediately recovers the classical determinantal formulas in cohomology, and setting $\beta=-1$ yields the formula \eqref{e.g1} of the introduction, for the triple $\triple=(p-r,p,q)$.

The point of view we emphasize here is that {\em raising operators} simplify the formulas.  Let $T_i$ be the operator which raises the index of $c(i)$ by one, and write $R_{ij} = T_i/T_j$.  The $(i,j)$ entry of the determinant in the theorem can be written compactly as
\[
  (1-\beta T_i)^{-\lambda_i}\cdot c_{\lambda_i-i+j}(i) ,
\]
using the generalized binomial coefficient identity $\binom{-\lambda}{m} = (-1)^m\binom{\lambda+m-1}{m}$.  Using the Vandermonde identity, the determinant itself can be expressed as
\begin{align}\label{e.raisingA}
  \left( \prod_{1\leq i<j\leq k_s} (1-R_{ij}) \right) \cdot \left( \prod_{i=1}^{k_s} (1-\beta T_i)^{-\lambda_i} c_{\lambda_i}(i) \right).
\end{align}
(We leave this simple exercise in matrix algebra to the reader.)  In fact, it is this formulation which we will prove.

\begin{proof}
The proof follows the outline of \cite{af2}.

\medskip
\noindent
{\bf Basic case.}  First we examine the simplest locus, where $s=1$, $p_1=1$, and $k_1=1$.  (This means $\lambda_1=q_1$.)  So $E_1$ is a line bundle, and the degeneracy locus $\Omega_\triple$ is defined by $E_1 \subseteq F_{q_1}$, or equivalently, $E_1 \to \tilde{F}_{q_1}$ is zero.  By basic properties of Chern classes (Property (a) in Appendix~\ref{s.kthy}), we have
\begin{align*}
  [\Omega_\triple] &= c_{q_1}(\tilde{F}_{q_1}\otimes E_1^*) \\
           &= (1-\beta T)^{-q_1} c_{q_1}(\tilde{F}_{q_1}-E_1),
\end{align*}
as desired.

\medskip
\noindent
{\bf Dominant case.}  Next we establish a product formula for the case where $p_i=k_i=i$ for all $i$.  (This means $\lambda_i=q_i$ for all $i$.)  The locus is defined by requiring $E_i\subseteq F_{q_i}$ for all $i$, or equivalently, requiring $E_i/E_{i-1} \to \tilde{F}_{q_i}$ to be zero.  So by repeated application of the basic case, we see
\begin{align*}
  [\Omega_\triple] &= \prod_{i=1}^s (1-\beta T_i)^{-q_i} c_{q_i}(\tilde{F}_{q_i}-E_i/E_{i-1}) \\
  &= \prod_{i=1}^s (1-\beta T_i)^{-q_i} \prod_{i=1}^s c_{q_i}(\tilde{F}_{q_i}-E_i/E_{i-1}) \\
  &=  \left( \prod_{i=1}^s (1-\beta T_i)^{-q_i} \right) \left( \prod_{1\leq i<j\leq s}(1-R_{ij})\right) \cdot c_{q_1}(1) \cdots c_{q_s}(s),
\end{align*}
which is the required formula.

(The equality
\begin{align}\label{e.prod-raise}
\prod_{i=1}^s c_{q_i}(\tilde{F}_{q_i}-E_i/E_{i-1}) = \left( \prod_{1\leq i<j\leq s}(1-R_{ij})\right) \cdot c_{q_1}(1) \cdots c_{q_s}(s),
\end{align}
used between the second and third lines, is formal algebra.  Setting $t_i=-c_1(E_i/E_{i-1})$, the Whitney formula lets us write
\[
  \prod_{i=1}^s c_{q_i}(\tilde{F}_{q_i}-E_i/E_{i-1}) = \prod_{j=1}^s\left[ c(j)\cdot \prod_{i=1}^{j-1} (1-t_i) \right]_{q_i};
\]
now argue by induction on $s$ to obtain \eqref{e.prod-raise}, using Property (b) from Appendix~\ref{s.kthy}.  This is the same argument as in \cite{af2}.)

\medskip
\noindent
{\bf Main case.}  Now we push forward the above expression to obtain the formula for the case where $k_i=i$.  (So $\lambda_i=q_i-p_i+i$.)  This is done by resolving the locus $\Omega_\triple$, via exactly the same Kempf-Laksov-type resolution described in \cite{af1}, \cite{af2} and \cite{himn}.  

Specifically, we form a sequence of projective bundles
\[
  X=X_0 \xleftarrow{\pi^{(1)}} X_1 = \PP(E_{p_1}) \xleftarrow{\pi^{(2)}} X_2 = \PP(E_{p_2}/D_1) \leftarrow \cdots \xleftarrow{\pi^{(s)}} X_s = \PP(E_{p_s}/D_{s-1}),
\]
where $D_i/D_{i-1} \subseteq E_{p_i}/D_{i-1}$ is the tautological line bundle on $X_i$.  Let $\pi\colon X_s \to X$ be the composition of all these maps.

On $X_s$, there is the locus $\tilde\Omega$ where $D_i/D_{i-1} \to \tilde{F}_{q_i}$ is zero for all $i$.  This is an instance of the dominant case, with $\tilde{\lambda}_i=q_i$ for all $i$, and $\tilde{c}(i) = c(F_{q_i}-D_i)$, so we have
\begin{align}\label{e.res-form}
  [\tilde\Omega] = \left( \prod_{i=1}^s (1-\beta T_i)^{-q_i} \right) \left( \prod_{1\leq i<j\leq s}(1-R_{ij})\right) \cdot \tilde{c}_{q_1}(1) \cdots \tilde{c}_{q_s}(s)
\end{align}
in $CK^*(X_s)$.

We need to compute $\pi_*[\tilde\Omega]$.  To do this, first use the Whitney formula to write
\[
  \tilde{c}(i) = c(\tilde{F}_{q_i}-D_i) = c(\tilde{F}_{q_i}-E_{p_i})\cdot c(E_{p_i}-D_i) =  c(i)\cdot c(E_{p_i}/D_i).
\]
Next, observe that $E_{p_i}/D_i$ is the tautological quotient bundle for the projective bundle $\pi^{(i)}\colon X_i \to X_{i-1}$, and $c(i)$ is pulled back from $X$.

We claim that
\begin{align}\label{e.push}
  \pi^{(i)}_*\left[ (1-\beta T_i)^{-q_i}\cdot \tilde{c}_{q_i}(i) \right] &= (1-\beta T_i)^{-q_i+p_i-i} \cdot c_{q_i-p_i+i}(i) \\
  &= (1-\beta T_i)^{-\lambda_i} \cdot c_{\lambda_i}(i).  \nonumber
\end{align}
To see this, let us write 
\begin{align*}
(1-\beta T_i)^{-q_i}\cdot \tilde{c}_{q_i}(i) &= \left[(1-\beta)^{-q_i}\cdot \tilde{c}(i)\right]_{q_i} \\
&= \left[ (1-\beta)^{-q_i}\cdot c(i)\cdot c(E_{p_i}/D_i) \right]_{q_i}.
\end{align*}
The pushforward $\pi^{(i)}_*$ lowers degree by $p_i-i$, so using the projection formula together with Property (c) of Appendix~\ref{s.kthy}, we have
\begin{align*}
  \pi^{(i)}_*\left[ (1-\beta )^{-q_i} c(i)\cdot c(E_{p_i}/D_i) \right]_{q_i} &= \left[ (1-\beta )^{-q_i} \cdot c(i) \cdot  \pi^{(i)}_*c(E_{p_i}/D_i) \right]_{q_i-p_i+i} \\
  &= \left[ (1-\beta )^{-q_i} \cdot c(i) \cdot  (1-\beta)^{p_i-i} \right]_{q_i-p_i+i} \\
  &= (1-\beta T_i)^{-q_i+p_i-i}\cdot c_{q_i-p_i+i}(i),
\end{align*}
as claimed.

Applying this calculation to \eqref{e.res-form} (for each $i$ from $s$ to $1$), we find
\begin{align*}
[\Omega]= \pi_*[\tilde\Omega] &= \left( \prod_{i=1}^s (1-\beta T_i)^{-\lambda_i} \right) \left( \prod_{1\leq i<j\leq s}(1-R_{ij})\right) \cdot {c}_{\lambda_1}(1) \cdots {c}_{\lambda_s}(s),
\end{align*}
as required.

\medskip
\noindent
{\bf General case.}  Given a general triple $\triple=(\bk,\bp,\bq)$, one can ``inflate'' it by adding entries to obtain a triple $\triple'=(\bk',\bp',\bq')$ having $k'_i=i$, without essentially changing the corresponding degeneracy locus.  This is done just as in \cite[\S1.4]{af2} (cf.~\cite[(3.6)]{mac2}).  Suppose $k_i-k_{i-1}>1$, and note that $q_{i-1}>q_i$ or $p_{i-1}<p_i$; we consider the latter case, the former being similar.  Insert $(k_{i}-1, p_i-1, q_i)$ between $(k_{i-1},p_{i-1},q_{i-1})$ and $(k_i,p_i,q_i)$, to obtain a new triple $\triple'$ having $\lambda(\triple')=\lambda(\triple)$; note that $\lambda_{k_i-1}=\lambda_{k_i}$.  The condition $\dim(E_{p_i}\cap F_{q_i})\geq k_i$ (from $\triple$) implies the only new condition in $\triple'$, namely $\dim(E_{p_i-1}\cap F_{q_i})\geq k_i-1$, so the degeneracy locus is the same.

The entries of the determinant do change when passing from $\triple$ to $\triple'$, but the determinant itself does not, by the algebraic identity of \cite[App.~A.3, Lemma 1]{af2}.  Let us explain this lemma in the case described above, so $\triple'$ comes from $\triple$ by inserting $(k_i-1,p_i-1,q_i)$.  The difference in the matrices appears in rows $(k_{i-1}+1)$ through $(k_i-1)$, where the classes $c(k_i)=c(V-E_{p_i}-F_{q_i})$ are replaced by $c(k_i-1)=c(V-E_{p_i-1}-F_{q_i})=c(k_i)\cdot c(E_{p_i}/E_{p_i-1})$.  So 
\[
  c_m(k_i-1) = c_m(k_i)+a c_{m-1}(k_i)
\]
for all $m$, where $a=c_1(E_{p_i}/E_{p_i}-1)$.

Writing $\ell=\lambda_{k_i-1}=\lambda_{k_i}$, and focusing on the rows $k_i-1$ and $k_i$, the determinant for $\triple'$ appears as
\begin{align*}
& \left|\begin{array}{cccc}\cdots & c_\ell(k_i-1) & c_{\ell+1}(k_i-1) & \cdots \\ \cdots & c_{\ell-1}(k_i) & c_\ell(k_i) & \cdots\end{array}\right| \\
& \qquad = \left|\begin{array}{cccc}\cdots & c_\ell(k_i) + a c_{\ell-1}(k_i) & c_{\ell+1}(k_i) + a c_{\ell}(k_i) & \cdots \\ \cdots & c_{\ell-1}(k_i) & c_\ell(k_i) & \cdots\end{array}\right| \\
& \qquad = \left|\begin{array}{cccc}\cdots & c_\ell(k_i)  & c_{\ell+1}(k_i)  & \cdots \\ \cdots & c_{\ell-1}(k_i) & c_\ell(k_i) & \cdots\end{array}\right| +  \left|\begin{array}{cccc}\cdots &  a c_{\ell-1}(k_i) &  a c_{\ell}(k_i) & \cdots \\ \cdots & c_{\ell-1}(k_i) & c_\ell(k_i) & \cdots\end{array}\right| \\
& \qquad = \left|\begin{array}{cccc}\cdots & c_\ell(k_i)  & c_{\ell+1}(k_i)  & \cdots \\ \cdots & c_{\ell-1}(k_i) & c_\ell(k_i) & \cdots\end{array}\right| + 0,
\end{align*}
with both rows multiplied by $(1-\beta T)^{-\ell}$, so this equals the determinant for $\triple$.
\end{proof}

\begin{remark}\label{r.himn}
In \cite{himn}, the determinantal formula is given in terms of {\em Segre classes} in $CK$-theory.  Since these are somewhat less standard than Chern classes, we have preferred to avoid them in the main formulas.  They are defined in \cite{himn} by setting
\[
  s(E^*-F^*;u) = \frac{1}{1+u^{-1}\beta}c(F-E;u+\beta),
\]
where $s(V;u)=\sum s_m(V) u^m$ and $c(V;u) = \sum c_m(V) u^m$ are the Segre and Chern polynomials, respectively.  To simplify the notation a bit, let us write $c'_m(F|E) = s_m(E^*-F^*)$.  (Warning: These Segre classes can be nonzero in negative degree.  And while these classes are well-defined for virtual bundles $E^*-F^*$, they are {\em not} compatible with the Whitney sum formula.)  In this notation, \cite[Theorem~3.10]{himn} says
\begin{align}\label{e.himn}
 [\Omega_\triple] = \det\left(\sum_{m\geq 0} \binom{i-j}{m}\beta^m\, c'_{\lambda_i-i+j+m}(\tilde{F}_{q_i}|E) \right)_{1\leq i,j\leq p},
\end{align}
for $E=E_p$, with $\triple$ having $k_i=i$ for $1\leq i\leq p$, and all $p_i=p$.

In fact, our formula is equivalent to the natural generalization of \eqref{e.himn} to vexillary loci,
\begin{align}\label{e.himn2}
 [\Omega_\triple] = \det\left(\sum_{m\geq 0} \binom{i-j}{m}\beta^m\, c'_{\lambda_i-i+j+m}(\tilde{F}_{q_i}|E_{p_i}) \right)_{1\leq i,j\leq k_s},
\end{align}
for any triple $\triple$.

To see this is equal to the formula of Theorem~\ref{t.mainA}, first observe
\begin{align*}
  c'_m(F|E) &= \sum_{i=0}^\infty \binom{m-1+i}{i}\beta^i\, c_{m+i}(F-E) \\
     &= \sum_{i=0}^{\infty} \binom{-m}{i}(-\beta)^i c_{m+i}(F-E) \\
     &= (1-\beta T)^{-m} \cdot c_m(F-E),
\end{align*}
where $T$ is the operator raising the index of $c(F-E)$ by one.  So the $(i,j)$ entry of the matrix in \eqref{e.himn2} is
\begin{align*}
 & \sum_{m\geq 0} \binom{i-j}{m}\beta^m\,c'_{\lambda_i-i+j+m}(\tilde{F}_{q_i}|E_{p_i})\\
 &\qquad = \sum_{m\geq 0} \binom{i-j}{m}\beta^m\, (1-\beta T_i)^{-\lambda_i+i-j-m} \cdot c_{\lambda_i-i+j+m}(\tilde{F}_{q_i}-E_{p_i}) \\
 & \qquad=  \sum_{m\geq 0} \binom{i-j}{m}\beta^m\, \sum_{l\geq 0} \binom{-\lambda_i+i-j-m}{l}(-\beta)^l \cdot c_{\lambda_i-i+j+m+l}(\tilde{F}_{q_i}-E_{p_i}) \\
 & \qquad= \sum_{t\geq 0} \sum_{m+l=t} \binom{i-j}{m}\binom{\lambda_i-i+j+m_l-1}{l}\beta^t\, c_{\lambda_i-i+j+t}(\tilde{F}_{q_i}-E_{p_i}) \\
 & \qquad= \sum_{t\geq 0} \binom{\lambda_i+t-1}{t}\beta^t\, c_{\lambda_i-i+j+t}(\tilde{F}_{q_i}-E_{p_i}) ,
\end{align*}
which is the $(i,j)$ entry of Theorem~\ref{t.mainA}.  (The last line uses the Cauchy identity
\[
  \sum_{a+b=c}\binom{p}{a}\binom{q}{b} = \binom{p+q}{c},
\]
valid for generalized binomial coefficients.)
\end{remark}

\begin{remark}\label{r.eagon-northcott}
The problem of computing the $K$-theory class of $\Omega_r$ is naturally related to that of constructing complexes which resolve ideals generated by minors of a matrix.  As noted in \cite[\S4]{l2}, in the case of maximal minors, this is the well-known Eagon-Northcott complex.  Such a complex determines a formula in $K$-theory; however, packaging the resulting formula as determinant seems to involve nontrivial algebra.
\end{remark}

\begin{remark}\label{r.interpret}
As in cohomology, the degeneracy locus formulas require either some genericity hypothesis, or some interpretation.  We briefly describe the situation for our purposes, which is a variation on the standard construction in intersection theory \cite[\S14]{f-it}; a more detailed treatment is in \cite{himn} and \cite[\S1]{act}.  The universal case for these formulas is that of a flag bundle $\Fl(V) \to X$, with the tautological flag $S_\bullet$ playing the role of $E_\bullet$, and with $F_\bullet$ being a flag of bundles pulled back from $X$.  The locus $\Omega_\triple$ is identified with a Schubert variety $\Omega_{w(\triple)}\subseteq \Fl(V)$, for a {\it vexillary permutation} $w(\triple)$.  The fact that Schubert varieties have rational singularities means that the resolution $\pi\colon\tilde\Omega \to \Omega$ constructed in the ``main case'' of the proof does in fact satisfy $\pi_*[\tilde\Omega] = [\Omega]$.

In general, the meaning of $[\Omega]$, for a degeneracy locus $\Omega\subseteq X$, is the pullback of the corresponding locus in $\Fl(V)$ via the section $X \to \Fl(V)$ determined by the given flag $E_\bullet$.

The same comments apply to the symplectic degeneracy loci of the following sections, with the type C, B, or D flag bundles in place of $\Fl(V)$.
\end{remark}

%%%%%%%%%%%%%%%%%%%%%%%%%%%%%%%%%%%%%%%%
\section{Type C degeneracy loci}\label{s.typeC}
%%%%%%%%%%%%%%%%%%%%%%%%%%%%%%%%%%%%%%%%

Here $V$ is a vector bundle of rank $2n$, equipped with a symplectic form, as well as two flags of isotropic subbundles
\[
  V \supset E_{p_1} \supseteq E_{p_2} \supseteq \cdots \supseteq E_{p_s}
\]
and
\[
  V \supset F_{q_1} \supseteq F_{q_2} \supseteq \cdots \supseteq F_{q_s},
\]
with $E_{p_i}$ and $F_{q_i}$ having rank $n+1-p_i$ and $n+1-q_i$, respectively.  The degeneracy locus associated to a (type C) triple $\triple = (\bk,\bp,\bq)$ is defined as in type A,
\[
  \Omega_\triple = \{ x\in X \,|\, \dim(E_{p_i}\cap F_{q_i})\geq k_i \text{ for } 1\leq i\leq s\},
\]
but the inequalities required on $\triple$ are different: the setup demands $0<k_1<\cdots<k_s$, $p_1\geq \cdots \geq p_s>0$, and $q_1\geq \cdots \geq q_s>0$, and we also require that 
\[
  k_i-k_{i-1} \leq (p_{i-1}-p_i) + (q_{i-1}-q_i)
\]
for $2\leq i\leq s$.  This means the sequence
\[
 \lambda_{k_i} = p_i+q_i-1
\]
is strictly decreasing, and can be extended to a {\em strict} partition $\lambda = \lambda(\triple) = (\lambda_1 > \lambda_2 > \cdots > \lambda_{k_s} >0)$, by setting $\lambda_k = \lambda_{k_i} + (k_i-k)$ for $k_{i-1}<k\leq k_i$.

\begin{thm}\label{t.mainC}
In $CK^*(X)$, we have
\begin{align*}
  [\Omega_\triple] &= \Pf_{\lambda(\triple)}( c(1),\ldots,c(k_s);\beta ) \\
  & := \Pf( M ),
\end{align*}
where $c(k_i) = c(V-E_{p_i}-F_{q_i})$ for $1\leq i\leq s$, $c(k)=c(k_i)$ if $k_{i-1}<k\leq k_i$, and $M = (m_{ij})$ is the skew-symmetric matrix with entries
\begin{align}
  m_{ij} &= \left( \frac{1-R_{ij}}{1+R_{ij}-\beta T_i} \right)\cdot \left( (1-\beta T_i)^{k_s-i-\lambda_i} \cdot c_{\lambda_i}(i) \right) \left( (1-\beta T_j)^{k_s-j-\lambda_j}\cdot c_{\lambda_j}(j) \right) \label{e.mij}
\end{align}
for $1\leq i<j\leq k_s$, with $m_{ji} = -m_{ij}$ and $m_{ii}=0$.  (If $k_s$ is odd, augment the matrix by an initial $0^{\mathrm{th}}$ row and column, with entries $m_{0j}=(1-\beta T_j)^{k_s-j-\lambda_j}\cdot c_{\lambda_j}(j)$.)
\end{thm}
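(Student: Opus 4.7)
The plan mirrors the four-step strategy of the type A proof, now with Pfaffians instead of determinants and an additional ``symplectic kernel'' in the raising operator formalism.

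\textbf{Basic and dominant cases.} I would first treat the basic case $s=1$, $p_1=k_1=1$, where $E_1$ is Lagrangian and the locus $\dim(E_1\cap F_{q_1})\geq 1$ is the Porteous locus for the generically injective map $E_1 \to V/F_{q_1}$; Property~(a) of Appendix~\ref{s.kthy} yields $[\Omega_\triple] = (1-\beta T_1)^{-\lambda_1}c_{\lambda_1}(V-E_1-F_{q_1})$, which matches the augmented $1\times 1$ Pfaffian. Next, a dominant type C case is handled by iterating the basic case on a chain of line bundles inside a Lagrangian subbundle of $V$, with additional relations coming from the symplectic identification $V/E_1 \cong E_1^*$. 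The goal is a product formula of the form
\[
  [\Omega_\triple] = \left(\prod_{i=1}^s (1-\beta T_i)^{-\lambda_i}\right)\left(\prod_{1\leq i<j\leq s}\frac{1-R_{ij}}{1+R_{ij}-\beta T_i}\right) c_{\lambda_1}(1)\cdots c_{\lambda_s}(s),
\]
generalizing the type A product \eqref{e.prod-raise} by the insertion of the ``symplectic kernel'' $(1+R_{ij}-\beta T_i)^{-1}$.

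\textbf{Passage to Pfaffian.} A formal algebraic identity---the $\beta$-deformation of Schur's classical Pfaffian formula $\prod_{i<j}\frac{1-R_{ij}}{1+R_{ij}} = \Pf\left(\frac{1-R_{ij}}{1+R_{ij}}\right)$---rewrites this product as $\Pf(M)$ with the $m_{ij}$ of the theorem statement, parity in $k_s$ being handled by the augmentation with a $0$-th row. This is the Pfaffian analogue of the Vandermonde identity used in \eqref{e.raisingA}, and should follow by a formal induction on the size of the Pfaffian.

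\textbf{Main case and inflation.} For a general triple with $k_i=i$, I would construct the Kempf-Laksov tower $X_s \to \cdots \to X_0 = X$ as in the type A main case, obtaining on $X_s$ a dominant locus $\tilde\Omega$ whose class is given by Steps 1--2. Each $m_{ij}$ factors as a product of single-index pieces times the raising-operator kernel, so the pushforward calculation \eqref{e.push} applies entrywise to $\pi_*$ and reduces each intermediate $\tilde\lambda_i$ to the correct $\lambda_i = p_i+q_i-1$ of the theorem. Finally, an arbitrary triple is reduced to this case by inflation, using a Pfaffian version of the substitution lemma \cite[App.~A.3, Lemma 1]{af2}.

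\textbf{Main obstacle.} The crux is the dominant-case product formula. The cohomological prototype ($\beta=0$) is due to Pragacz and Kazarian and arises from relations among top Chern classes of Lagrangian subbundles; lifting to $CK^*$ requires the correct $\beta$-deformation of these relations, threaded through iterated projective-bundle pushforwards. The asymmetric appearance of $-\beta T_i$ (only with index $i$) in the kernel mirrors the asymmetric exponents $-\lambda_i+1$ versus $-\lambda_j$ in the two factors of $m_{ij}$, and gives a clue about the right normalization to use throughout the product formula.
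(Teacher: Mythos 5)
Your outline has the right overall shape — basic case, dominant case via a raising-operator product, Knuth's Pfaffian identity (which is indeed the $\beta$-deformation of Schur's identity), projective-bundle resolution, and inflation — but two of the steps as written would fail, and a third hides a real geometric input.

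\textbf{Dominant case formula is wrong.}
Your claimed product
\[
  \Bigl(\prod_{i=1}^s (1-\beta T_i)^{-\lambda_i}\Bigr)\Bigl(\prod_{1\leq i<j\leq s}\tfrac{1-R_{ij}}{1+R_{ij}-\beta T_i}\Bigr) c_{\lambda_1}(1)\cdots c_{\lambda_s}(s)
\]
is missing the factor $\prod_{i=1}^s(1-\beta T_i)^{s-i}$, i.e.\ the exponent should be $-\lambda_i + s - i$, not $-\lambda_i$. You cannot just insert the ``symplectic kernel'' into the type~A Vandermonde product; the symplectic identification $D_{i-1}^\perp \cong (V/D_{i-1})^*$ forces the Whitney manipulation to introduce
\[
 c(D_{i-1}-D_{i-1}^*) = \prod_{j=1}^{i-1}\frac{(1-t_j)(1-\beta t_j)}{1+t_j-\beta t_j},
\]
and it is precisely the $(1-\beta t_j)$ numerators that contribute the $(1-\beta T_i)^{s-i}$ you omit. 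Without these, the Pfaffian you obtain from Knuth's identity has the wrong diagonal normalization $S$ (in the decomposition $M = S\,M'\,S^{\mathit t}$) and cannot reproduce the $m_{ij}$ of the theorem: already at $s=2$ the entry $m_{12}$ carries $(1-\beta T_1)^{-\lambda_1+1}$, not $(1-\beta T_1)^{-\lambda_1}$. This discrepancy vanishes at $\beta=0$, which is presumably why the error is invisible if one reasons by analogy with the Chow-ring argument.

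\textbf{Basic case is misidentified, and internally inconsistent with what you iterate.}
You take $s=1$, $p_1 = k_1 = 1$, making $E_1$ Lagrangian; but then you say you would ``iterate the basic case on a chain of line bundles inside a Lagrangian subbundle.'' Those are not the same thing. The case that actually iterates is $p_1 = n$, so $E_n$ is a \emph{line bundle} and $\Omega$ is the zero locus of the map $E_n \to V/F_{q_1}$; only then does Property~(a) of Appendix~A apply directly (it is a statement about $c_e(E\otimes L^*)$ for $L$ a line bundle). For a Lagrangian $E_1$, your locus is a genuine Porteous locus and the formula is a consequence of Theorem~\ref{t.mainA}, not a one-line Chern class computation.

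\textbf{The inflation step is not formal algebra.}
In type~A the invariance of the determinant under inflating $\triple$ to $\triple'$ is a row-operation identity. In type~C this fails: after inflation the rows $k_\ell-1$ and $k_\ell$ of the Pfaffian matrix have the same underlying Chern classes $c(k_\ell-1)=c(k_\ell)$ and shifted degrees $\lambda_{k_\ell-1}=\lambda_{k_\ell}+1$, but the $(k_\ell-1,\,k_\ell)$ entry does \emph{not} vanish by formal manipulation. Its vanishing is a genuine relation among Chern classes of symplectic bundles — precisely Lemma~\ref{l.relation} — which is proved by the geometric $\PP^1$-bundle comparison (the resolution of the ``improper triple'' $(1\,2,\,p\,p,\,q\,q)$, giving $\pi_*[\tilde\Omega]=-\beta[\Omega]$). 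Your ``Pfaffian version of the substitution lemma'' elides this entirely; as written your plan would stall at the general case.

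The Knuth/Schur step, the projective-bundle pushforward computation via \eqref{e.push}, and the overall scaffolding are all fine, but the three points above need to be repaired before this becomes a proof.
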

\noindent
Specializing $\beta=0$ recovers the cohomological Pfaffian formulas of \cite{k,af1}.

The matrix entries can be written out explicitly by expanding the raising operator.  It is not hard to check the identity
\begin{align*}
  \frac{1-R_{ij}}{1+R_{ij}-\beta T_i} 
  &= 1+ \sum_{m> 0} \beta^m\,T_i^m \\ &\qquad + \sum_{\ell>0}(-1)^\ell \sum_{m\geq 0}\left( \binom{\ell+m-1}{m} + \binom{\ell+m}{m} \right)(\beta T_i)^m R_{ij}^\ell.
\end{align*}
So writing
\begin{align} \label{e.mij2}
  m'_{ij} &=  c_{\lambda_i}(i)\,c_{\lambda_j}(j) + \sum_{m>0} \beta^m c_{\lambda_i+m}(i)\,c_{\lambda_j}(j)  \\ 
  &  \quad + \sum_{\ell>0} \sum_{m\geq 0}  (-1)^\ell \left( \binom{\ell+m-1}{m} + \binom{\ell+m}{m} \right) \beta^{m}\, c_{\lambda_i+\ell+m}(i)\, c_{\lambda_j-\ell}(j) , \nonumber
\end{align}
we have $m_{ij} = \left( (1-\beta T_i)^{-\lambda_i+1} \,(1-\beta T_j)^{-\lambda_j}\right)\cdot m'_{ij}$.  Note that the matrix can also be written as $M=S\cdot M'\cdot S^{\mathit{t}}$, where $M'=(m'_{ij})$ and $S$ is a diagonal matrix with $i^{\mathrm{th}}$  entry $(1-\beta T_{i})^{k_s-i-\lambda_{i}}$.

Using Knuth's identity \cite[(4.3)]{knuth},
\[
  \prod_{i<j} \frac{T_i^{-1}-T_j^{-1}}{T_i^{-1}+T_j^{-1}-\beta} = \Pf\left( \frac{T_i^{-1}-T_j^{-1}}{T_i^{-1}+T_j^{-1}-\beta}\right),
\]
together with the general identity $\Pf(S\cdot M'\cdot S^{\mathit{t}}) = \det(S)\cdot \Pf(M')$, the Pfaffian in the theorem can be written more compactly in terms of raising operators:
\begin{align}\label{e.raisingC}
 [\Omega_\triple] &= R^{(k_s)}\cdot \left(\prod_{i=1}^{k_s} (1-\beta T_i)^{-\lambda_i} c_{\lambda_i}(i) \right),
\end{align}
where
\begin{align*}
R^{(k_s)} = \left( \prod_{1\leq i<j\leq k_s}  \frac{1-R_{ij}}{1+R_{ij}-\beta T_i}\right) \cdot \left(\prod_{i=1}^{k_s} (1-\beta T_i)^{k_s-i} \right).
\end{align*}
As in type A, it is the formulation of \eqref{e.raisingC} that we will prove.

\begin{proof}
Again, the proof follows the same structure as in \cite{af2}.  The main difference between type C and type A appears in the ``dominant'' case, where the Vandermonde product is replaced by the Schur Pfaffian product.

\medskip
\noindent
{\bf Basic case.}  This is the same as in type A, with only notational changes.  We consider $s=1$, $p_1=n$, and $k_1=1$, so $\lambda_1=n+q_1-1$, $E_n$ is a line bundle, and the degeneracy locus $\Omega_\triple$ is defined by $E_n \subseteq F_{q_1}$.  Noting that the rank of $V/F_{q_1}$ is equal to $\lambda_1=n+q_1-1$, we have
\begin{align*}
  [\Omega_\triple] &= c_{n+q_1-1}(V/F_{q_1}\otimes E_n^*) \\
           &= (1-\beta T)^{-\lambda_1} c_{\lambda_1}(V-F_{q_1}-E_n).
\end{align*}

\medskip
\noindent
{\bf Dominant case.}  Next is the case where $k_i=i$ for all $i$, and $p_i=n+1-i$, so $\lambda_i=n-i+q_i$.  Let us write $D_i = E_{n+1-i}$, so $D_i$ has rank $i$.  The locus can be defined by requiring $D_i \subseteq F_{q_i}$ for all $i$, or equivalently, $D_i/D_{i-1} \subseteq F_{q_i}/D_{i-1}$.  Using the fact that the bundles are isotropic, this is the same as requiring $D_i/D_{i-1} \to D_{i-1}^\perp/F_{q_i}$ to be zero for all $i$.   Noting that $\lambda_i=\rk(D_{i-1}^\perp/F_{q_i})$, and applying the basic case, we find a product of factors of the form $(1-\beta T_i)^{-\lambda_i} c_{\lambda_i}(D_{i-1}^\perp/D_{i-1} - F_{q_i}/D_{i-1} - D_i/D_{i-1})$.  Here we use the symplectic form again to identify $D_{i-1}^\perp = (V/D_{i-1})^*$, and also $V^* = V$, so
\[
  c(D_{i-1}^\perp/D_{i-1} - F_{q_i}/D_{i-1} - D_i/D_{i-1}) = c(V-D_i-F_{q_i})\,c(D_{i-1}-D_{i-1}^*).
\]
Writing $t_i=-c_1(D_i/D_{i-1})$, one has
\[
  c(D_{i-1}^*) = \prod_{j=1}^{i-1} \frac{1+t_j-\beta t_j}{1-\beta t_j},
\]
so
\[
 c(D_{i-1}-D_{i-1}^*) = \prod_{j=1}^{i-1} \frac{(1-t_j)(1-\beta t_j)}{1+t_j-\beta t_j}.
\]
Putting these together, and using Property (b),
\begin{align*}
  [\Omega_\triple] &= \prod_{i=1}^s (1-\beta T_i)^{-\lambda_i} c_{\lambda_i}(D_{i-1}^\perp/D_{i-1} - F_{q_i}/D_{i-1} - D_i/D_{i-1}) \\
  &= \prod_{i=1}^s (1-\beta T_i)^{-\lambda_i} \prod_{i=1}^s \left[c(V-D_i-F_{q_i})\, \prod_{j=1}^{i-1} \frac{(1-t_j)(1-\beta t_j)}{1+t_j-\beta t_j} \right]_{\lambda_i} \\
  &=  \left( \prod_{i=1}^s (1-\beta T_i)^{-\lambda_i+s-i} \right) \left( \prod_{1\leq i<j\leq s}\frac{1-R_{ij}}{1+R_{ij}-\beta T_i} \right) \cdot c_{\lambda_1}(1) \cdots c_{\lambda_s}(s) \\
   &=  R^{(s)}\cdot \left( \prod_{i=1}^s (1-\beta T_i)^{-\lambda_i} c_{\lambda_i}(i) \right),
\end{align*}
as claimed.

\medskip
\noindent
{\bf Main case.}  The argument for this case, where $k_i=i$ for all $i$, is similar to the corresponding case in type A.  Following Kazarian, we resolve the locus $\Omega_\triple$ via a tower of projective bundles,
\[
  X=X_0 \xleftarrow{\pi^{(1)}} X_1 = \PP(E_{p_1}) \xleftarrow{\pi^{(2)}} X_2 = \PP(E_{p_2}/D_1) \leftarrow \cdots \xleftarrow{\pi^{(s)}} X_s = \PP(E_{p_s}/D_{s-1}),
\]
with $D_i/D_{i-1} \subseteq E_{p_i}/D_{i-1}$ being the tautological line bundle on $X_i$, and $\pi\colon X_s\to X$ the composition.

Consider the locus $\tilde\Omega \subseteq X_s$ where $D_i \subseteq F_{q_i}$ for all $i$, an instance of the dominant case, with $\pi\colon \tilde\Omega \to \Omega$ birational.  Letting $\tilde\lambda_i = n-i+q_i$ and $\tilde{c}(i) = c(V - D_{i} - F_{q_i})$, we have
\begin{align}
  [\tilde\Omega] =R^{(s)}\cdot \left( \prod_{i=1}^s (1-\beta T_i)^{-\tilde\lambda_i} c_{\tilde\lambda_i}(i) \right)
\end{align}
by the dominant case.  Just as in type A (see \eqref{e.push}), the application of Property (c) to the pushforward $\pi^{(i)}$ yields
\[
  \pi^{(i)}_*[ (1-\beta T_i)^{-\tilde\lambda_i+s-i} \cdot \tilde{c}_{\tilde\lambda_i}(i)] = (1-\beta T_i)^{-\lambda_i+s-i} \cdot c_{\lambda_i}(i).
\]
Putting these together, we find the desired formula:
\[
  \pi_*[\tilde\Omega] = R^{(s)}\cdot \left( \prod_{i=1}^s (1-\beta T_i)^{-\lambda_i} c_{\lambda_i}(i) \right).
\]

\medskip
\noindent
{\bf General case.}  As in type A, we inflate $\triple=(\bk,\bp,\bq)$ to a triple $\triple'=(\bk',\bp',\bq')$ with $k'_i=i$, to reduce to the main case.  Suppose, for some $\ell$, that $k_\ell-k_{\ell-1}>1$ and $p_{\ell-1}>p_\ell$.  (The case $q_{\ell-1}>q_\ell$ is similar.)  Inserting $(k_\ell-1,\,p_\ell+1,\,q_\ell)$ between $(k_{\ell-1},\,p_{\ell-1},\,q_{\ell-1})$ and $(k_\ell,\,p_\ell,\,q_\ell)$ produces a triple $\triple'$ with $\lambda(\triple')=\lambda(\triple)$, and note in particular that
\[
  \lambda_{k_\ell-1}=\lambda_{k_\ell}+1.
\]
Since $\dim(E_{p_\ell}\cap F_{q_\ell})\geq k_\ell$ implies $\dim(E_{p_\ell+1}\cap F_{q_\ell}) \geq k_\ell-1$, the degeneracy locus is unchanged.

The Pfaffian formula is also invariant under the substitution of $\triple'$ for $\triple$, although the entries of the matrix are different.  This identity requires a bit more algebra than before.  Let
\[
  c'(k_{\ell}-1) = c(V-E_{p_\ell+1}-F_{q_\ell}) = c(k_\ell-1)\cdot c(E_{p_\ell}/E_{p_\ell+1}),
\]
and $c'(k)=c(k)$ for all $k\neq k_\ell-1$.  (Repeating this argument to change each $k$ from $k_{i-1}+1$ through $k_i-1$ produces the Chern classes appearing in the formula associated to $\triple'$.)  With $a=c_1(E_{p_\ell}/E_{p_\ell+1})$, we have
\begin{align}\label{e.expand}
  c'_m(k_{\ell}-1) = c_m(k_{\ell}-1) + a c_{m-1}(k_{\ell}-1).
\end{align}
We wish to show that
\begin{align}\label{e.general}
  & R^{(k_s)}\cdot \left( \prod_{i=1}^{k_s} (1-\beta T_i)^{-\lambda_i} c_{\lambda_i}(i)\right)  \\
  & \quad =  R^{(k_s)}\cdot \left( \prod_{i=1}^{k_s} (1-\beta T_i)^{-\lambda_i} c'_{\lambda_i}(i)\right), \nonumber
\end{align}
assuming inductively that $[\Omega]$ is given by the RHS.  
In view of \eqref{e.expand}, the RHS of \eqref{e.general} is equal to the LHS, plus
\[
  a\cdot  \prod (1-\beta T_i)^{-\lambda_i} \cdot R^{(k_s)} \cdot (\cdots c_{\lambda_{k_{\ell}-1}-1}(k_{\ell}-1) \,c_{\lambda_{k_\ell}}(k_{\ell})\cdots  ).
\]
Since $\lambda_{k_{\ell}-1}-1=\lambda_{k_\ell}$ and $c(k_{\ell}-1)=c(k_\ell)$, this term is the Pfaffian of a matrix whose $(k_\ell-1)^{\mathrm{st}}$ and $k_\ell^{\mathrm{th}}$ rows (and columns) are identical, and it therefore vanishes.  (The only nontrivial part of this observation is that the $(k_\ell-1,k_\ell)$ entry is zero; this is the content of Lemma~\ref{l.relation}, below.)
\end{proof}

In contrast with the type A situation, there are universal relations among Chern classes for bundles equipped with a symplectic form.  The relations needed to prove the general case of the theorem are stated in the following lemma, whose proof uses special instances of the dominant and main cases of the theorem.

\begin{lemma}\label{l.relation}
Let $c(1)=c(2)=c(V-E_p-F_q)$, and $\lambda=p+q-1$.  Then 
\[
  (1-\beta T_1)^{-m} (1-\beta T_2)^{-m} \frac{1-R_{12}}{1+R_{12}-\beta T_1} c_{\lambda}(1) \, c_{\lambda}(2) = 0
\]
for any integer $m$.
\end{lemma}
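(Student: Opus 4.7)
The plan is to compute the class $[\Omega] \in CK^*(X)$ of $\Omega = \{\dim(E_p \cap F_q) \geq 2\}$ in two different ways using Theorem~\ref{t.mainC}, and then extract the lemma from the comparison.

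First, I would pass to a base $X$ on which there is a generic isotropic subbundle $E_{p+1} \subseteq E_p$ of corank one; this can be arranged by replacing the original base $X_0$ with the projective bundle $X = \PP(E_p^*) \to X_0$ parameterizing such hyperplanes. For the feasible triple $\triple = (k=2,\,p,\,q)$, which has $\lambda(\triple) = (p+q,\,p+q-1)$ and degeneracy locus $\Omega$, Theorem~\ref{t.mainC} applies and produces $[\Omega]$ as the $2 \times 2$ Pfaffian, which reduces to the single entry $m_{12}(\triple)$ built from $c(1) = c(2) = c(V-E_p-F_q)$.

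Next, consider the ``inflated'' triple $\triple' = (k=(1,2),\,p=(p+1,p),\,q=(q,q))$. This has $\lambda(\triple') = \lambda(\triple)$ but fails the strict feasibility inequality by one. Set-theoretically $\Omega_{\triple'} = \Omega$, because $\dim(E_p \cap F_q) \geq 2$ together with $E_{p+1} \subseteq E_p$ of corank one forces $\dim(E_{p+1} \cap F_q) \geq 1$. Form the Kempf--Laksov--Kazarian tower $X_1 = \PP(E_{p+1})$, $X_2 = \PP(E_p/D_1)$ with $\tilde\Omega' = \{D_1 \subseteq F_q,\,D_2 \subseteq F_q\} \subseteq X_2$; at a generic point of $\Omega$ one has $\dim(E_p \cap F_q) = 2$ and $\dim(E_{p+1} \cap F_q) = 1$, forcing $D_1 = E_{p+1} \cap F_q$ and $D_2 = E_p \cap F_q$ to be uniquely determined, so the resolution $\pi \colon \tilde\Omega' \to \Omega$ is birational and $\pi_*[\tilde\Omega'] = [\Omega]$. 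The dominant case of the theorem applies on $X_2$ (with the strict partition $\tilde\lambda = (n+q-1,\,n+q-2)$), and iterating the pushforward identity~\eqref{e.push} produces the same raising-operator expression $m_{12}(\triple')$, now evaluated with $c'(1) = c(V-E_{p+1}-F_q)$ and $c'(2) = c(V-E_p-F_q)$.

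Equating $m_{12}(\triple) = m_{12}(\triple') = [\Omega]$ and substituting the Whitney relation $c(V-E_{p+1}-F_q) = c(V-E_p-F_q)\,(1+a)$ with $a := c_1(E_p/E_{p+1})$, the difference $m_{12}(\triple') - m_{12}(\triple)$ collapses---after observing $(1-\beta T_1)^{-(p+q)+1} = (1-\beta T_1)^{-\lambda}$ with $\lambda = p+q-1$---to exactly $a$ times the left-hand side of the lemma. Hence $a \cdot (\mathrm{LHS}) = 0$ in $CK^*(X)$. The main obstacle will be to cancel $a$: since $X = \PP(E_p^*) \to X_0$ is a projective bundle, by the projective bundle formula $CK^*(X)$ is a free $CK^*(X_0)$-module with $a$ acting as the relative hyperplane class, hence $a$ is a non-zero-divisor on classes pulled back from $X_0$; because the lemma's LHS only involves such pulled-back classes (namely $c(V-E_p-F_q)$), the relation $a \cdot (\mathrm{LHS}) = 0$ forces $\mathrm{LHS} = 0$, as claimed.
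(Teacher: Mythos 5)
Your computation of $[\Omega]$ via the main-case resolution of the inflated triple $\triple' = (1\;2,\; p{+}1\;p,\; q\;q)$ is correct and reproduces \eqref{e.r1} of the paper. The problem is the other half of your comparison: deducing $[\Omega] = m_{12}(\triple)$ from Theorem~\ref{t.mainC} applied to $\triple = (2,\,p,\,q)$ is circular. Since $k_1 = 2 \neq 1$, this triple falls under the ``general case'' of the theorem, and the paper's proof of the general case proceeds precisely by inflating $\triple$ to a main-case triple (such as your $\triple'$) and invoking Lemma~\ref{l.relation} to show the Pfaffian is unchanged---the very statement you are trying to prove. You cannot use the conclusion of Theorem~\ref{t.mainC} for $\triple$ as an input to the proof of the lemma that the theorem's proof depends on.

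What the paper does instead is produce a second \emph{geometric} expression for $[\Omega]$ that bypasses the general case entirely. It applies the Kempf--Laksov--Kazarian construction to the ``improper triple'' $(1\;2,\; p\;p,\; q\;q)$, whose partition $(p{+}q{-}1,\, p{+}q{-}1)$ is not strict. In that tower, $\bar\Omega = \{D_2 \subseteq F_q\} \subseteq \Gr(2, E_p)$ maps birationally onto $\Omega$, while $\tilde\Omega = \{D_1 \subseteq F_q,\, D_2 \subseteq F_q\} \subseteq X_2$ is a $\PP^1$-bundle over $\bar\Omega$. Property~(c) of Appendix~\ref{s.kthy} then gives $\pi_*[\tilde\Omega] = -\beta[\Omega]$, and pushing forward the dominant-case formula yields \eqref{e.r2}. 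Combining $\beta$ times \eqref{e.r1} with \eqref{e.r2} produces
\[
(1 + \beta a)\cdot (1-\beta T_1)^{-\lambda}(1-\beta T_2)^{-\lambda}\,\frac{1-R_{12}}{1+R_{12}-\beta T_1}\, c_{\lambda}(1)\,c_{\lambda}(2) = 0,
\]
and since $a = c_1(E_p/E_{p+1})$ is nilpotent, $1+\beta a$ is a unit and the lemma follows. Your device of base-changing to $\PP(E_p^*)$ so that $a$ becomes the relative hyperplane class, hence a non-zero-divisor on pulled-back classes, is a legitimate way to cancel $a$---but the comparison the paper actually makes yields the unit coefficient $1+\beta a$ rather than $a$, so that extra work is unnecessary once the correct second computation is in hand.
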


\begin{proof}
First we treat the case $m=\lambda$, using the geometric arguments from the ``main'' and ``dominant'' cases of the previous proof.

Let $\Omega$ be the locus defined by $\dim(E_p\cap F_q)\geq 2$.  Without changing the locus, include the redundant condition $\dim(E_{p+1}\cap F_q) \geq 1$.  Set the following notation:
\begin{align*}
\lambda'_1 &= (p+1)+q-1 = \lambda+1,  & c'(1) &= c(V-E_{p+1}-F_q) = c(1)\cdot(1+a), \\
\lambda'_2 &= p+q-1 = \lambda,  & c'(2) &= c(V-E_{p}-F_q) = c(2),
\end{align*}
where $a=c_1(E_p/E_{p+1})$, and also write $R = \frac{1-R_{12}}{1+R_{12}-\beta T_1}$.  By the main case of the previous proof, we have
\[
  [\Omega] = (1-\beta T_1)^{-\lambda'_1+1} (1-\beta T_2)^{-\lambda'_2}\cdot R \cdot c'_{\lambda'_1}(1)\,c'_{\lambda'_2}(2),
\]
which can be rewritten as
\begin{align}\label{e.r1}
[\Omega] = (1-\beta T_1)^{-\lambda} (1-\beta T_2)^{-\lambda}\cdot R \cdot (T_1+a) c_{\lambda}(1)\,c_{\lambda}(2).
\end{align}

On the other hand, applying the main case construction to the ``improper triple'' $(1\;2,\; p\;p,\; q\;q)$, we form a diagram
\[
\begin{tikzcd}
 {} &  X_1 = \PP(E_p) \ar{dl}[swap]{\pi^{(1)}}  \\
X  &  \Gr(2,E_p) \ar{l}{\phi^{(1)}} &   X_2 =\PP(E_p/D_1) = \Fl(1,2;E_p) \ar{l}{\phi^{(2)}} \ar[start anchor={[xshift=-9ex]} ]{ul}[swap]{\pi^{(2)}}  \\
\Omega \ar[hook]{u} & \bar\Omega \ar{l} \ar[hook]{u} & \makebox[\widthof{$X_2 =\PP(E_p/D_1) = \Fl(1,2;E_p)$}][l]{$\tilde{\Omega}$} \ar{l} \ar[hook, start anchor={[xshift=-14ex]} , end anchor={[xshift=-14ex]} ]{u}
\end{tikzcd}
\]
where $\tilde\Omega = \{ D_1\subseteq F_q,\; D_2 \subseteq F_q\}$ (as in the previous proof), and $\bar\Omega = \{D_2\subseteq F_q\}$.  Note that $\phi^{(1)}$ maps $\bar\Omega$ birationally to $\Omega$, so $\phi^{(1)}_*[\bar\Omega] = [\Omega]$.  But $\phi^{(2)}$ is a $\PP^1$ bundle, and so is its restriction to $\tilde\Omega$, since $\tilde{\Omega} = (\phi^{(2)})^{-1}\bar\Omega$.  Therefore $\phi^{(2)}_*[\tilde\Omega] = -\beta[\bar\Omega]$, by property (c) of Appendix~\ref{s.kthy}.  It follows that $\pi_*[\tilde\Omega] = -\beta[\Omega]$, where $\pi\colon X_2\to X$ is the composition.

Applying the same algebra as in the main and dominant cases to compute $\pi_*$, we find
\begin{align}\label{e.r2}
 -\beta[\Omega] &= (1-\beta T_1)^{-\lambda+1} (1-\beta T_2)^{-\lambda} \cdot R \cdot c_{\lambda}(1)\,c_{\lambda}(2).
\end{align}
Adding $\beta$ times the RHS of \eqref{e.r1} to the RHS of \eqref{e.r2} yields the relation
\begin{align*}
 0 &= (1-\beta T_1)^{-\lambda}(1-\beta T_2)^{-\lambda} R \cdot c_{\lambda}(1)\, c_{\lambda}(2) \\
 & \qquad - \beta T_1 (1-\beta T_1)^{-\lambda}(1-\beta T_2)^{-\lambda} R \cdot c_{\lambda_1}(1)\, c_{\lambda_2}(2) \\
 & \qquad + \beta \,a\, (1-\beta T_1)^{-\lambda}(1-\beta T_2)^{-\lambda} R \cdot c_{\lambda}(1)\, c_{\lambda}(2) \\
& \qquad + \beta T_1 (1-\beta T_1)^{-\lambda}(1-\beta T_2)^{-\lambda} R \cdot c_{\lambda}(1)\, c_{\lambda}(2) \\
&= (1+\beta\,a)(1-\beta T_1)^{-\lambda}(1-\beta T_2)^{-\lambda} R\cdot c_{\lambda}(1)\,c_{\lambda}(2).
\end{align*}
Since $a=c_1(E_p/E_{p+1})$ is nilpotent, the factor $(1+\beta\,a)$ is a unit, and the claim for $m=\lambda$ follows.

The relation for general $m$ comes from the formal algebraic identity
\begin{align*}
(1-\beta T_1)^{-m} (1-\beta T_1)^{-m} R \cdot c_k(1) c_k(2) = R \cdot c_k(1) c_k(2),
\end{align*}
valid for any series $c=c(1)=c(2)=1+c_1+c_2+\cdots$, for any $k\geq 0$, and any $m\in \ZZ$.  To prove it, subtract the right side from the left and expand the raising operator $( (1-\beta T_1)^{-m} (1-\beta T_1)^{-m} - 1 )R$ as
\begin{align*}
 &  \left( \left(\sum_{a\geq 0} (-\beta)^a \binom{-m}{a} (T_1+T_2-\beta T_1 T_2)^a \right) - 1 \right) \frac{1-T_1/T_2}{1+T_1/T_2-\beta T_1}\\
 &\quad=   \left(\sum_{a\geq 1} (-\beta)^a \binom{-m}{a} (T_1+T_2-\beta T_1 T_2)^a  \right) \frac{T_2-T_1}{T_2+T_1-\beta T_1 T_2} \\
 &\quad=  \left(\sum_{a\geq 1} (-\beta)^a \binom{-m}{a} (T_1+T_2-\beta T_1 T_2)^{a-1}  \right) (T_2-T_1).
\end{align*}
This is an anti-symmetric series in positive powers of $T_1$ and $T_2$, so when applied to $c_k(1) c_k(2)$ (for $c(1)=c(2)$), the result is zero.
\end{proof}

As a special case of Theorem~\ref{t.mainC}, we obtain a formula extending Giambelli's formula for symmetric matrices.   
Consider a symmetric morphism $\phi\colon W\to W^*$, i.e., one so that the dual $\phi^\vee \colon W^{**} = W \to W^*$ is equal to $\phi$, and suppose $W$ has rank $n$.  Let $c=c(W^*-W)$, and write
\[
  c'_m = (1-\beta T)^{-1}\cdot c_m = \sum_{k\geq 0} \beta^k c_{m+k}
\]
for any $m\in \ZZ$.

\begin{corollary}\label{c.pf}
The locus $\Omega_r$ where $\rk(\phi)\leq r$ has class
\begin{align*}
  [{\Omega_r}] &= \Pf_{(n-r,\ldots,1)}(c,\ldots,c;\beta) \\
        &= \Pf\left(m_{ij} \right)_{1\leq i,j\leq n-r},
\end{align*}
where
\begin{align*} 
  m_{ij} &=  c'_{\lambda_i}\,c'_{\lambda_j} + \sum_{m>0} \beta^m\, c'_{\lambda_i+m}\,c'_{\lambda_j}  \\ 
  &  \qquad + \sum_{\ell>0} \sum_{m\geq 0}  (-1)^\ell \left( \binom{\ell+m-1}{m} + \binom{\ell+m}{m} \right) \beta^m \, c'_{\lambda_i+\ell+m}\, c'_{\lambda_j-\ell} , \nonumber
\end{align*}
and $\lambda_i = n-r+1-i$ for $1\leq i\leq n-r$.
\end{corollary}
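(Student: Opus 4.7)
The plan is to reinterpret $\Omega_r$ as a type C symplectic degeneracy locus and then invoke Theorem~\ref{t.mainC}. Set $V = W \oplus W^*$ with its canonical symplectic form, take $E = W$ (Lagrangian of rank $n$), and let $F = \Gamma_\phi = \{(w, \phi(w))\} \subset V$ be the graph of $\phi$. The symmetry $\phi^\vee = \phi$ is exactly what forces the symplectic form to vanish on $F$, so $F$ is also Lagrangian. Projection to the first factor identifies $E \cap F$ with $\ker \phi$, so $\rk \phi \leq r$ if and only if $\dim(E \cap F) \geq n - r$; this realizes $\Omega_r$ as the degeneracy locus for the type C triple $\triple = (n-r,\,1,\,1)$ (i.e.\ $s=1$, $k_1 = n-r$, $p_1 = q_1 = 1$), whose partition is $\lambda = (n-r,\,n-r-1,\ldots,1)$, matching the corollary.

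The Chern-class input collapses as well: since $F \cong W$ via projection, one computes
$$
  c(V - E - F) = c(W \oplus W^* - W - W) = c(W^* - W) = c,
$$
so all of $c(1),\ldots,c(k_s)$ in Theorem~\ref{t.mainC} coincide with the single class $c$. Applying the theorem gives $[\Omega_r] = \Pf(M)$, where $M$ is the skew-symmetric matrix with entries $m_{ij}$ defined by \eqref{e.mij}.

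It then remains to match $\Pf(M)$ with the Pfaffian written in the corollary. Using the factorization $M = S M' S^t$ noted after \eqref{e.mij2} --- with $M' = (m'_{ij})$ and $S$ diagonal with $S_{ii} = (1-\beta T_1)\cdots(1-\beta T_{i-1})(1-\beta T_i)^{-\lambda_i}$ --- one has $\Pf(M) = \det(S)\,\Pf(M')$. The corollary's matrix $M^{\mathrm{cor}}$ is obtained from $M'$ by the substitution $c_m \mapsto c'_m = (1-\beta T)^{-1} c_m$, hence equals $\tilde S M' \tilde S^t$ with $\tilde S_{ii} = (1-\beta T_i)^{-1}$, and so $\Pf(M^{\mathrm{cor}}) = \det(\tilde S)\,\Pf(M')$. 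The corollary therefore reduces to the single algebraic identity $\det(S) = \det(\tilde S)$.

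This last identity is the main (and essentially only) calculation. Treating the $T_j$ as commuting formal operators, the total exponent of $(1-\beta T_j)$ in $\det(S) = \prod_i S_{ii}$ is $(n-r-j) - \lambda_j$; with $\lambda_j = (n-r) - j + 1$ this equals $-1$, matching the exponent in $\det(\tilde S)$. This is the only step where the special staircase shape of $\lambda$ plays a role. The remaining conceptual point --- that $[\Omega_r]$ really does coincide with the class of the symplectic degeneracy locus $\{\dim(E \cap F) \geq n - r\}$ built from the graph construction --- is handled by pulling back from the universal type C flag bundle as in Remark~\ref{r.interpret}, and this is the place where one would expect the most care, though no genuine obstacle arises.
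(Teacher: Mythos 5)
Your proof is correct and follows essentially the same route as the paper: realize $\Omega_r$ as the type C degeneracy locus for the triple $(n-r,1,1)$ via the graph construction in $W\oplus W^*$, identify $c(V-E-F) = c(W^*-W)$, and invoke the congruence invariance $\Pf(SM'S^t)=\det(S)\Pf(M')$ to convert the matrix of Theorem~\ref{t.mainC} into the corollary's matrix. The only differences are cosmetic (you swap the labels of the two Lagrangians $E$ and $F$, which is harmless), and you make explicit the check $\det(S)=\det(\tilde S)$ — the point where the staircase shape $\lambda_i=n-r+1-i$ is used — which the paper leaves implicit behind the phrase ``some flexibility in the form of the matrix is allowed by the congruence relation.''
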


\noindent
Setting $\beta=-1$ produces a formula for $[\OO_{\Omega_r}]$ in $K^\circ(X)$.

\begin{proof}
Let $V=W\oplus W^*$, equipped with the canonical symplectic form (as in \cite[\S6.2]{fp}): $\langle v\oplus f, \, w\oplus g \rangle = f(w) - g(v)$.  The condition that the morphism $\phi$ be symmetric is equivalent to requiring that its graph $E = \Gamma_\phi \subseteq V$ be isotropic with respect to this form.  Let $F=W\oplus 0 \subseteq V$, another isotropic subbundle.  Then $\rk(\phi)\leq r$ is equivalent to $\dim(E\cap F) \geq n-r$, so the locus $\Omega_r$ is the degeneracy locus associated to $\triple =(n-r,1,1)$.

The formula of the corollary now follows from that of the theorem, with a little explanation.  The Chern classes are $c(V-E-F) = c(V/E - F) = c(W^*-W)$, since $E$ and $F$ are both isomorphic to $W$.
\end{proof}

%\pagebreak

%%%%%%%%%%%%%%%%%%%%%%%%%%%%%%%%%%%%%%%%
\section{Type B degeneracy loci}\label{s.typeB}
%%%%%%%%%%%%%%%%%%%%%%%%%%%%%%%%%%%%%%%%

A similar argument establishes a Pfaffian formula for a odd-orthogonal degeneracy loci.  Here $V$ has rank $2n+1$ and is equipped with a nondegenerate quadratic form, and the isotropic subbundles $E_{p_i}$ and $F_{q_i}$ have rank $n+1-p_i$ and $n+1-q_i$, respectively.  The triple $\triple=(\bk,\bp,\bq)$ and associated strict partition $\lambda(\triple)$ are the same as in type C, as is the definition of the degeneracy locus $\Omega_\triple$.

We will assume there exists a maximal isotropic subbundle $F\supseteq F_{q_1}$, and let $M=F^\perp/F$.  This line bundle is independent of the choice of $F$, since $M\isom \det(V)$, and furthermore, the identification $V\isom V^*$ means that $M^2$ is trivial.

In order to have Chern class formulas for orthogonal degeneracy loci, one needs to invert $2$, so from now on we work with $CK^*(X)[\frac{1}{2}]$.  This makes $c_1(M)=0$, since
\[
  0 = c_1(M^2) = c_1(M)\cdot (2+\beta c_1(M)),
\]
and $(2+\beta c_1(M))$ is now a unit.  However, we will continue to include $M$ in the formulas, as an indication of the underlying geometry.

Let $c(k_i) = c(V-E_{p_i}-F_{q_i}-M)$ for $1\leq i\leq s$, and $c(k)=c(k_i)$ for $k_{i-1}<k\leq k_i$.

\begin{thm}\label{t.mainB}
In $CK^*(X)[\frac{1}{2}]$, we have
\begin{align*}
  [\Omega_\triple] &= \Pf^B_{\lambda(\triple)}( c(1),\ldots,c(k_s);\beta ),
\end{align*}
where $\Pf^B_{\lambda(\triple)}( c(1),\ldots,c(k_s);\beta )$ is the Pfaffian of the skew symmetric matrix $M=(m_{ij})$ where the entries $m_{ij}$ are
\begin{align}
  \left( \frac{1-R_{ij}}{1+R_{ij}-\beta T_i} \right)\cdot \left( \frac{(1-\beta T_i)^{k_s-i-\lambda_i+1}}{2-\beta T_i} \cdot c_{\lambda_i}(i) \right) \left( \frac{(1-\beta T_j)^{k_s-j-\lambda_j+1}}{2-\beta T_j}\cdot c_{\lambda_j}(j) \right), \label{e.mijB}
\end{align}
augmented by a $0^{\mathrm th}$ row and column in case $k_s$ is odd, with entries $m_{0j} = (1-\beta T_j)^{k_s-j-\lambda_j+1}(2-\beta T_j)^{-1}\cdot c_{\lambda_j}(j)$.
\end{thm}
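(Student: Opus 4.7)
The proof mirrors the four-step template (basic, dominant, main, general) of Theorem~\ref{t.mainC}. The new phenomena in type B are the appearance of the line bundle $M = F^\perp/F$ and the factors $(2-\beta T_i)^{-1}$ in the matrix entries $m_{ij}$; both reflect the quadratic form on $V$ and require inverting $2$. After inverting $2$, $c_1(M)=0$, so $c(V-E_{p_i}-F_{q_i}-M)$ differs from $c(V-E_{p_i}-F_{q_i})$ only in the higher-order $\beta$-corrections that are encoded by the geometric-series expansion of $(2-\beta T)^{-1}$.

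\textbf{Basic case.} For $s=1$, $p_1=n$, $k_1=1$, the bundle $E_n$ is an isotropic line bundle and $\lambda_1 = n+q_1-1$. Extend $F_{q_1}$ to a maximal isotropic subbundle $F$ and split the condition $E_n \subseteq F_{q_1}$ into (i) $E_n \subseteq F$ and (ii) the induced map $E_n \to F/F_{q_1}$ is zero. Step (ii) is a type~A Porteous condition contributing $(1-\beta T)^{-(q_1-1)} c_{q_1-1}(F/F_{q_1}-E_n)$. Step (i) is the orthogonal Schubert condition of an isotropic line lying in a maximal isotropic subbundle; its $K$-theory class is computed via the $K$-theoretic Euler-class machinery of Appendix~\ref{s.euler} applied to $(F^\perp/E_n) \otimes E_n^*$ (an even-rank bundle with induced quadratic form after the isotropic identification), and produces the characteristic factor $(2-\beta T)^{-1}$. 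Reassembling the contributions via the short exact sequence $F/F_{q_1} \hookrightarrow F^\perp/F_{q_1} \twoheadrightarrow M$, and invoking $c_1(M)=0$, yields $(1-\beta T)^{-\lambda_1+1}(2-\beta T)^{-1} c_{\lambda_1}(V-E_n-F_{q_1}-M)$, as required for $m_{0,1}$.

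\textbf{Dominant and main cases.} For the dominant case ($k_i = i$, $p_i = n+1-i$), iterate the basic case along the isotropic flag $D_i = E_{n+1-i}$ as in the type C proof. The Whitney-style contribution $c(D_{i-1}-D_{i-1}^*)$ produces the same raising operator $\prod_{i<j}(1-R_{ij})/(1+R_{ij}-\beta T_i)$ as in type C, while the $(2-\beta T_i)^{-1}$ factors from each basic-case step accumulate unchanged; Knuth's Pfaffian identity then converts the resulting product into $\Pf^B$. The main case ($k_i=i$) lifts $\Omega_\triple$ to the Kempf--Laksov tower $X_s = \PP(E_{p_s}/D_{s-1})$ and reduces to the dominant case, then pushes forward using Property~(c). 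The computation of~\eqref{e.push} converts $(1-\beta T_i)^{-\tilde\lambda_i}c_{\tilde\lambda_i}(i)$ into $(1-\beta T_i)^{-\lambda_i}c_{\lambda_i}(i)$ while leaving the $(2-\beta T_i)^{-1}$ factors untouched, since they are pulled back from $X$ and carry no degree in the projective fibers.

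\textbf{General case and main obstacle.} The general case reduces to the main case by inflating $\triple$ to $\triple'$ with $k'_i = i$, exactly as in types A and C. Pfaffian invariance under inflation requires a type B analogue of Lemma~\ref{l.relation}: if $c(1) = c(2) = c(V-E_p-F_q-M)$ and $\lambda = p+q-1$, then the $(1,2)$ entry of the type B matrix vanishes. This is proved by the same two-resolution trick, comparing the Kempf--Laksov resolution of $\{\dim(E_p \cap F_q)\geq 2\}$ with the one factoring through $\Fl(1,2;E_p)$ via a $\PP^1$-bundle, and reading off the vanishing relation from the difference of pushforwards. The principal obstacle is the basic case: producing the denominator $(2-\beta T)$ geometrically forces the use of the $K$-theoretic Euler class of Appendix~\ref{s.euler} and careful bookkeeping of the line bundle $M$. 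Once this factor is pinned down, the remaining steps are routine adaptations of the type C argument.
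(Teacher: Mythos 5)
Your overall structure (basic, dominant, main, general, with the only genuinely new ingredient being the basic case) matches the paper exactly, and you correctly flag the $(2-\beta T)$-denominator as the crux and the need for a type B analogue of Lemma~\ref{l.relation}. But the paper's actual proof of Theorem~\ref{t.mainB} is shorter than you think: it states Lemma~\ref{l.basicB} for the basic case and \emph{cites HIMN for it}, then says the rest proceeds ``exactly as for Theorem~\ref{t.mainC}.'' So the basic case is the one place where the paper defers, and it is also the one place where your proposal, which aims to be self-contained, has a genuine gap.

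The gap is in your ``Step (i).'' You claim the locus $\{E_n \subseteq F\}$ is handled by ``the $K$-theoretic Euler-class machinery of Appendix~\ref{s.euler} applied to $(F^\perp/E_n)\otimes E_n^*$.'' This does not work for two reasons. First, $F^\perp/E_n$ is only defined on the locus where $E_n \subseteq F^\perp$, which is exactly the locus you are trying to cut out, so it is not a bundle on $X$. Second, even where it is defined, $(F^\perp/E_n)\otimes E_n^*$ has rank $n$, which is odd in general; the Euler class of Appendix~\ref{s.euler} is a construction for \emph{even-rank} bundles with quadratic form. More fundamentally, the source of the $\frac{1-\beta T}{2-\beta T}$ factor is different: the locus $\{E_n \subseteq F\}$ is cut out set-theoretically by the vanishing of the rank-$n$ map $E_n \to V/F^\perp \cong F^*$, but the scheme so cut out carries a double structure (the quadric in $\PP(F^\perp)$ defined by the restriction of $q$ is a double hyperplane $2\,\PP(F)$), so the naive Porteous class $c_n(F^*\otimes E_n^*)$ overcounts. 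In $CK^*$ this overcounting is corrected not by a factor $\frac{1}{2}$ but by the full operator $\frac{1-\beta T}{2-\beta T}$, and deriving it requires an explicit pushforward computation over the odd quadric bundle $\cQ(V)\to X$ (the type B analogue of the Proposition at the end of Appendix~\ref{s.euler}, whose even-rank version is used to prove Lemma~\ref{l.basicD}). Nothing in your proposal produces this. Finally, your ``reassembling'' step treats the classes from conditions (i) and (ii) as if they multiply, but (ii) only makes sense on the locus defined by (i), and the raising operator in the target formula acts on $c(V-E_n-F_{q_1}-M)$ jointly rather than on the factor $c(F^*-E_n)$ alone; you would need to justify why the $(2-\beta T)^{-1}$ ends up acting on the total Chern class, which is a nontrivial manipulation you do not carry out.

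In short: right skeleton, correct identification of the key lemma and the key operator, but the proposed geometric derivation of the basic case is not valid. To repair it you should either cite \cite[Corollary~7.4 or Lemma~7.3]{himn} as the paper does, or prove an odd-quadric-bundle pushforward formula and run the Kazarian-style argument as in the proof of Lemma~\ref{l.basicD}.
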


As before, this is equivalent to a raising operator formula:
\begin{align}\label{e.raisingB}
 [\Omega_\triple] &= R^{(k_s)}\cdot \left(\prod_{i=1}^{k_s}\frac{1-\beta T_i}{2-\beta T_i} (1-\beta T_i)^{-\lambda_i} c_{\lambda_i}(i) \right).
\end{align}

The only difference between this and the type C situation arises in the basic case.  Here we need a computation.

\begin{lemma}\label{l.basicB}
Let $\Omega = \Omega_{(1,n,q)}$ be the locus where $E_n=D_1 \subseteq F_q$.  Then
\[
 [\Omega] = \frac{1-\beta T}{2-\beta T}(1-\beta T)^{n+q-1}\cdot c_{n+q-1}(V-D_1-F_q-M),
\]
where $T^k\cdot c_m(V-D_1-F_q-M) = c_{m+k}(V-D_1-F_q-M)$.
\end{lemma}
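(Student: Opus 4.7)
The plan is to exploit the orthogonal filtration $F_q\subseteq F_q^\perp\subseteq V$, decomposing the condition $D_1\subseteq F_q$ into two stages so as to isolate where the odd-rank geometry of $V$ enters. First, consider the intermediate locus $\tilde\Omega=\{D_1\subseteq F_q^\perp\}$, cut out by the vanishing of the pairing $\langle D_1,F_q\rangle$, a regular section of the rank-$(n+1-q)$ bundle $F_q^*\otimes D_1^*$. By Property~(a) of Appendix~\ref{s.kthy}---exactly as in the Type~A basic case---this gives
\[
[\tilde\Omega] \;=\; (1-\beta T)^{-(n+1-q)}\,c_{n+1-q}(F_q^*-D_1).
\]
On $\tilde\Omega$ the composition $D_1\to V/F_q$ lifts to a map $s\colon D_1\to F_q^\perp/F_q$. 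Since $D_1$ is isotropic in $V$ and the form on $V$ descends to a non-degenerate form on the rank-$(2q-1)$ orthogonal quotient $F_q^\perp/F_q$, the lift $s$ is isotropic. Hence $\Omega\subseteq\tilde\Omega$ is the vanishing locus of an isotropic section of an odd-rank orthogonal bundle, of further codimension $2q-2$, for total codimension $n+q-1=\lambda_1$ in $X$.

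The core computation is then to evaluate the class of $\{s=0\}$ inside $\tilde\Omega$ in $CK^*(X)[\tfrac{1}{2}]$. Invoking the $K$-theoretic Euler class for orthogonal bundles constructed in Appendix~\ref{s.euler} (valid after inverting~$2$), together with the Chern-class identities for maximal isotropic subbundles developed there, the contribution of the isotropic section $s$ supplies the ``top Chern class'' $c_{2q-2}$ of the ``non-$M$ part'' $F_q^\perp/F_q-M$ (using that $c(M)=1$ in $CK^*[\tfrac{1}{2}]$), multiplied by an operator factor $\frac{1-\beta T}{2-\beta T}$. This factor specializes at $\beta=0$ to the familiar $\tfrac{1}{2}$ appearing in cohomological Pfaffian formulas in Type~B, and in $CK^*$ it encodes the scheme-theoretic ``doubling'' of the isotropic cone at its vertex in an odd-rank orthogonal bundle.

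Combining the two steps via the Whitney identity
\[
c(V-D_1-F_q-M)\;=\;c(F_q^*-D_1)\cdot c(F_q^\perp/F_q - M),
\]
the two top Chern classes fuse into a single $c_{n+q-1}(V-D_1-F_q-M)$, while the operator prefactors combine to give $\frac{1-\beta T}{2-\beta T}(1-\beta T)^{-(n+q-1)}$, matching the stated formula. The main obstacle is the middle step: extracting the precise operator factor $\frac{1-\beta T}{2-\beta T}$ from the isotropic-section calculation. Because $s$ fails to be regular---its image lies in the singular isotropic cone of $F_q^\perp/F_q$---the naive Porteous formula overcounts, and one must use the full Euler-class apparatus of the appendix together with care in the scheme-theoretic analysis of non-regular isotropic sections in $CK^*[\tfrac{1}{2}]$.
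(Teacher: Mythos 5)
The paper does not supply its own proof of this lemma; it simply points to \cite[Corollary~7.4, Lemma~7.3]{himn}, and the argument there (like the paper's proof of the type D analogue, Lemma~\ref{l.basicD}) works by pushing forward from the quadric bundle $\cQ(V)\to X$ using the localization formulas of Appendix~\ref{s.euler}. Your proposal takes a genuinely different route---cutting $\Omega$ out in two stages via the filtration $F_q\subseteq F_q^\perp\subseteq V$---but as written it contains a real gap and at least one bookkeeping problem, so it does not establish the result.

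The gap is exactly where you flag it, in the middle step, and it is not a matter of ``care in the scheme-theoretic analysis'' but of a missing formula. You assert that the zero locus of the isotropic section $s\colon D_1\to F_q^\perp/F_q$ contributes $\tfrac{1-\beta T}{2-\beta T}\,c_{2q-2}(F_q^\perp/F_q - M)$, but there is no general Euler class for the isotropic cone in an \emph{odd}-rank orthogonal bundle: Appendix~\ref{s.euler} constructs Euler classes only in the even-rank case (via top Chern classes of maximal isotropic subbundles), and an odd orthogonal bundle has no maximal isotropic subbundle of the right rank. Establishing your claimed class for $Z(s)$ is in fact equivalent to the lemma itself (take $F_q=E_n$ trivial, say), so invoking it here is circular. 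Compare with the paper's type D basic case: there the analogous class is \emph{not} a simple Euler factor but involves the cross-term $e(D_1,F_q)$ that only collapses after further manipulation, and extracting it requires the quadric-bundle pushforward machinery. Your first step is also not as innocuous as stated: the section $D_1\to F_q^*$ lands in the isotropic cone, and in the universal case $X=\cQ(V)$ the zero scheme $\PP(F_q^\perp)\cap\cQ(V)$ is a (singular, and for $q=1$ even nonreduced) quadric cone, so ``regular section'' has to be interpreted with some care.

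There is also an operator mismatch. Step 1 produces $(1-\beta T)^{-(n+1-q)}$, and even granting your step-2 factor $\tfrac{1-\beta T}{2-\beta T}$, the product falls short of the target $(1-\beta T)^{-(n+q-1)}$ by $(1-\beta T)^{-(2q-2)}$, which is nowhere accounted for. Moreover, the $T$'s in the two stages act on different Chern classes---$c(F_q^*-D_1)$ in step 1 and $c(F_q^\perp/F_q - M)$ in step 2---whereas $T$ in the statement raises the index of the single class $c(V-D_1-F_q-M)$. Fusing these requires a Cauchy-product--type raising-operator identity analogous to \eqref{e.prod-raise}, which you neither state nor prove. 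Until the isotropic-section Euler class is actually computed (by some argument independent of the lemma) and the raising-operator algebra is carried out, this is a decomposition of the problem rather than a proof of it.
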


\noindent
This is equivalent to \cite[Corollary~7.4]{himn}, or its integral form \cite[Lemma~7.3]{himn}.

The proof of Theorem~\ref{t.mainB} now proceeds exactly as for Theorem~\ref{t.mainC}, using Lemma~\ref{l.basicB} to treat the basic case.

%%%%%%%%%%%%%%%%%%%%%%%%%%%%%%%%%%%%%%%%
\section{Type D degeneracy loci}\label{s.typeD}
%%%%%%%%%%%%%%%%%%%%%%%%%%%%%%%%%%%%%%%%

As usual, the case where $V$ has rank $2n$ and is equipped with a nondegenerate quadratic form presents the most complications.  We will use the notation of \cite{af2}, so the isotropic bundles $E_{p_i}$ and $F_{q_i}$ have ranks $n-p_i$ and $n-q_i$, respectively.  A {\em type D triple} is $\triple = (\bk,\bp,\bq)$ with $0<k_1<\cdots <k_s$, $p_1\geq \cdots \geq p_s\geq 0$, and $q_1\geq \cdots \geq q_s\geq0$, satisfying inequalities which can be expressed simply as follows: if $\triple^+$ is formed by adding $1$ to each $p_i$ and each $q_i$, then $\triple^+$ is a type C triple.

The degeneracy locus is defined similarly, by
\[
\Omega_\triple = \{ x\in X \,|\, \dim(E_{p_i}\cap F_{q_i})\geq k_i \},
\]
but one should take care when interpreting the scheme structure; this should be interpreted as the closure of the locus where equality holds (see \cite[\S6]{fp} or \cite[\S6.3.2]{t2}).  We also have a strict partition $\lambda=\lambda(\triple)$ defined by
\[
  \lambda_{k_i} = p_i+q_i,
\]
with other parts filled in minimally.  In contrast to the previous cases, it may happen that $\lambda_{k_s}=0$, and this is included in the data.  In this case, the definition of $\Omega_\triple$ includes the condition that $\dim(E_0\cap F_0)\geq k_s$; being the closure of the locus where equality holds means that $\dim(E_0\cap F_0)\equiv k_s \pmod 2$ on all of $\Omega_\triple$.

The formulas require a bit more setup, both algebraically and geometrically.  On the algebraic side, we will use elements $c_\alpha(i)$, $e_\alpha(i)$, and $d_\alpha(i)=c_\alpha(i)+\sigma(i)e_\alpha(i)$, where $\sigma(i)\in \{1,0,-1\}$, and operators $\delta_i$ will act by sending $\sigma(i)$ to $0$.  This action extends to monomials $d_{\alpha_1}(1)\cdots d_{\alpha_s}(s)$, with the effect of replacing $d_{\alpha_i}(i)$ by $c_{\alpha_i}(i)$.  Further details about this algebra can be found in \cite[Appendix A]{af2} (with slightly different notation).

The geometry involves {\em Euler classes} $e(E_p,F_q)$ for isotropic subbundles $E_p$ and $F_q$.  Choosing maximal isotropic bundles $E\supseteq E_p$ and $F\supseteq F_q$, these can be written as
\begin{align*}
  e_m(E_p,F_q) = \begin{cases} (-1)^{\dim(E\cap F)} \gamma(E,F)\, c_{p+q}(E/E_p+F/F_q) & \text{ if }m=p+q; \\ 0 & \text{ if } m\neq p+q, \end{cases}
\end{align*}
where $\gamma(E,F)\in CK^0(X)$ is the canonical square root of $c(V-E-F;\beta)$ defined in Appendix~\ref{s.euler}.  Results from the appendix also show that $e(E_p,F_q)$ is independent of the choice of $E$ and $F$.

For $1\leq i\leq s$, let $c(k_i) = c(V-E_{p_i}-F_{q_i})$ and $e(k_i) = e(E_{p_i},F_{q_i})$; for $k_{i-1}<k\leq k_i$, set $c(k)=c(k_i)$ and $e(k)=e(k_i)$.  We will write $d(i) = c(i) + (-1)^i e(i)$.  

\begin{thm}\label{t.mainD}
In $CK^*(X)[\frac{1}{2}]$, we have
\begin{align*}
  [\Omega_\triple] &= \Pf^D_{\lambda(\triple)}( d(1),\ldots,d(k_s);\beta ) \\
  &:= \Pf(M),
\end{align*}
where $M=(m_{ij})$ is the skew-symmetric matrix with entries
\begin{align}
  m_{ij} &=  \frac{1-\delta_i\delta_j R_{ij}}{1+\delta_i\delta_j(R_{ij}-\beta T_i)} \cdot \frac{(1-\beta \tilde T_i)^{k_s-i-\lambda_i+1}}{2-\beta \tilde T_i} \cdot \frac{(1-\beta \tilde T_j)^{k_s-j-\lambda_j+1}}{2-\beta \tilde T_j} \nonumber \\
  & \qquad \cdot \big(  c_{\lambda_i}(i)-(-1)^{k_s} e_{\lambda_i}(i) \big) \big( c_{\lambda_j}(j)+(-1)^{k_s} e_{\lambda_j}(j) \big) ,\label{e.mijD}
\end{align}
where $\tilde T_i = \delta_i T_i$.  (When $k_s$ is odd, the matrix is augmented by a $0^{\mathrm th}$ row and column, setting $m_{0j} = (1-\beta\tilde T_j)^{k_s-j-\lambda_j+1}(2-\beta\tilde T_j)^{-1}\cdot ( c_{\lambda_j}(j) + e_{\lambda_j}(j) )$.)
\end{thm}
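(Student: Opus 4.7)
The plan is to mirror the four-step strategy of Theorems~\ref{t.mainA}, \ref{t.mainC}, and \ref{t.mainB}: a basic case on a line subbundle, a dominant case whose product expression is converted into a Pfaffian by a raising-operator identity, a main case obtained by pushing forward along a Kempf--Laksov--Kazarian tower, and finally an inflation argument reducing the general case to $k_i=i$. The novelty is that $K$-theoretic Euler classes $e(E_p,F_q)$ from Appendix~\ref{s.euler} enter already in the basic case, and the parity of $\dim(E\cap F)$ on the resolution propagates through the construction as the sign $(-1)^{k_s}$ and the $\sigma(i)\in\{\pm1,0\}$ data underlying the operators $\delta_i$.

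First I would handle the basic case $\triple=(1,p,q)$, where a line bundle $D_1\subseteq E_p$ is required to lie in $F_q$. Choosing maximal isotropic extensions $E\supseteq E_p$ and $F\supseteq F_q$, this locus is cut out scheme-theoretically by the composition $D_1\hookrightarrow E\to V/F\cong F^*$, but the relevant class must be adjusted by the sign $(-1)^{\dim(E\cap F)}\gamma(E,F)$ appearing in the Appendix's Euler-class construction, because $E$ and $F$ may lie in the same or different connected components of the isotropic Grassmannian fibrewise. This gives the class as a combination of $c_{p+q}(V-D_1-F_q)$ and $e_{p+q}(E_p,F_q)$ with the type B denominator $(2-\beta\tilde T)^{-1}$; the analogue of Lemma~\ref{l.basicB} here will produce the factor $d(1)=c(1)+(-1)\,e(1)$ (with the appropriate sign adjustment), using the square-root identity $\gamma(E,F)^2=c(V-E-F;\beta)$.

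Next, for the dominant case I take $k_i=i$ and $p_i=s-i$ (parallel to type C, but with all ranks shifted by one as dictated by $\triple\mapsto\triple^+$), iterate the basic case on the successive quotients $D_i/D_{i-1}$, and use the same formal manipulation as in type C to trade the product
\[
\prod_{i=1}^s \frac{1-\beta \tilde T_i}{2-\beta \tilde T_i}(1-\beta \tilde T_i)^{-\lambda_i}\cdot\bigl(c_{\lambda_i}(i)+(-1)^{i}e_{\lambda_i}(i)\bigr)
\]
for its Pfaffian, via Knuth's identity. The Euler contributions from successive isotropic quotients assemble into the $d(i)$'s, and the parity dependence is tracked by the $\sigma(i)$ signs, which is precisely what the operator $\delta_i$ kills when the corresponding slot is resolved away in the main case. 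For the main case with general $p_i$ but $k_i=i$, I build the tower $X_i=\PP(E_{p_i}/D_{i-1})$, observe that $\pi\colon\tilde\Omega\to\Omega$ is birational, and apply the projection formula fibrewise, exactly as in \eqref{e.push}, to lower the exponents $-\tilde\lambda_i=-(p_i+q_i+\cdots)$ to $-\lambda_i$; the Euler-class terms are compatible with this pushforward by the functoriality established in the appendix.

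The hardest step will be the general case. As in type C, I inflate $\triple$ to $\triple'$ with $k'_i=i$, and the claim is that the Pfaffian is invariant under this substitution. Expanding $c'(k_\ell-1)=c(k_\ell-1)+a\,c(k_\ell-1)$ (with $a=c_1(E_{p_\ell}/E_{p_\ell+1})$) as in \eqref{e.expand} produces an extra term which must be shown to vanish; this vanishing reduces to a type D analogue of Lemma~\ref{l.relation}, asserting that the $(k_\ell-1,k_\ell)$ entry of the Pfaffian matrix---where the two rows share the same Chern and Euler data---is zero. I expect to prove this exactly as in type C, by comparing the expressions \eqref{e.r1} and \eqref{e.r2} obtained from two geometric resolutions of the same locus $\dim(E_p\cap F_q)\geq 2$, one birational and one via a $\PP^1$-bundle. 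The new wrinkle in type D is that I must simultaneously verify the $e(i)$-contribution vanishes; this should follow from the relation among top Chern classes of maximal isotropic subbundles established in the appendix, which is the essential input that was absent in type C. Once the identity $(1+\beta a)\cdot(\text{matrix entry})=0$ is proved in the $d(i)$-variables and $(1+\beta a)$ is recognized as a unit, the general case follows by induction on $k_s-s$ exactly as before.
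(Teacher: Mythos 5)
Your four-step skeleton (basic $\to$ dominant $\to$ main $\to$ general) matches the paper exactly, and you correctly locate the new ingredients: Euler classes $e(E_p,F_q)$, the $\sigma(i)/\delta_i$ bookkeeping, the parity sign $(-1)^{k_s}$, and a type D analogue of Lemma~\ref{l.relation}.  But two of the crucial details are described in a way that would not actually work.

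In the basic case, you describe the locus $D_1\subseteq F_q$ as ``cut out scheme-theoretically by the composition $D_1\hookrightarrow E\to V/F\cong F^*$'' with the class ``adjusted by the sign $(-1)^{\dim(E\cap F)}\gamma(E,F)$.''  This is not what happens.  That naive Chern class would sit in degree $n+q$, but $\codim\Omega = n-1+q$, one less; the Euler class does not enter as a multiplicative unit correcting a zero-section computation.  The paper's Lemma~\ref{l.basicD} is an \emph{additive} decomposition
\[
[\Omega] = \tfrac{1-\beta T}{2-\beta T}(1-\beta T)^{-n+1-q}\,c_{n-1+q}(V-D_1-F_q) \;-\; \tfrac12\,e(D_1,F_q),
\]
and it is proved by a genuinely different mechanism: pushing forward through the quadric bundle $\pi\colon\cQ(V)\to X$ using the classes $e=[\PP(E)]$, $f=[\PP(F)]$ and the two pushforward formulas from Appendix~\ref{s.euler}.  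Your proposal does not contain this idea, and without it the formula with the $-\tfrac12 e$ term cannot be reached.

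You also understate what makes the dominant case work.  The paper must show, before any raising-operator manipulation is possible, that the variable $t_i=-c_1(D_i/D_{i-1})$ acts on the basic-case factor $\zeta_i$ as $\delta_i T_i$---i.e.\ that $t_i^a\cdot\zeta_i$ for $a\geq 1$ annihilates the Euler term and raises only the Chern index.  This is a concrete computation whose punchline is exactly the relation $c_n(F^*)=(-1)^{\dim(E\cap F)}\gamma(E,F)\,c_n(E)$ of Theorem~\ref{t.sqrt}(iii).  You invoke that appendix relation only in the general case (and your $p_i=s-i$ should be $p_i=n-i$), but it is in the dominant case that it carries the whole argument.  Separately, for the type D analogue of Lemma~\ref{l.relation} you will also need to treat $\lambda=0$, which is permitted in type D but not in type C; the paper handles this by a direct algebraic computation in $\gamma=\gamma(E,F)$ before appealing to the type C geometric argument for $\lambda>0$.
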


\noindent
As before, specializing $\beta=0$ recovers the cohomology formulas from \cite{k,af2}.

Let us unpack the definition of $m_{ij}$ by writing 
\begin{align} \label{e.mij2D}
  m'_{ij} &=  \big( c_{\lambda_i}(i) - (-1)^{k_s} e_{\lambda_i}(i) \big)\,\big( c_{\lambda_j}(j) + (-1)^{k_s} e_{\lambda_j}(j) \big)+ \sum_{m>0} \beta^m c_{\lambda_i+m}(i)\,c_{\lambda_j}(j)  \\ 
  &  \quad + \sum_{\ell>0} \sum_{m\geq 0}  (-1)^\ell \left( \binom{\ell+m-1}{m} + \binom{\ell+m}{m} \right) \beta^{m}\, c_{\lambda_i+\ell+m}(i)\, c_{\lambda_j-\ell}(j) . \nonumber
\end{align}
Then
\[
  m_{ij} = \frac{(1-\beta\tilde T_i)^{k_s-i-\lambda_i+1}}{2-\beta\tilde T_i}\frac{(1-\beta\tilde T_j)^{k_s-j-\lambda_j+1}}{2-\beta\tilde T_j}\cdot m'_{ij}.
\]
As in type C, we can write $M = S\cdot M' \cdot S^{\mathit{t}}$, where $M'=(m'_{ij})$ and $S$ is diagonal with $i^{\mathrm{th}}$ entry $(1-\beta \tilde T_i)^{k_s-i-\lambda_i+1}(2-\beta\tilde T_i)^{-1}$.

As usual, we will prove an equivalent raising operator formula:
\begin{align}\label{e.raisingD}
 [\Omega_\triple] &=\tilde R^{(k_s)}\cdot \left(\prod_{i=1}^{k_s} \frac{1-\beta\tilde T_i}{2-\beta\tilde T_i} (1-\beta \tilde T_i)^{-\lambda_i} d_{\lambda_i}(i) \right),
\end{align}
where
\begin{align*}
 \tilde R^{(k_s)} = \left( \prod_{1\leq i<j\leq k_s}  \frac{1-\delta_i\delta_j R_{ij}}{1+\delta_i \delta_j (R_{ij}-\beta T_i)}\right) \left(\prod_{i=1}^{k_s} (1-\beta\tilde T_i)^{k_s-i} \right),
\end{align*}
To see that this is equivalent to Theorem~\ref{t.mainD}, one uses results of \cite[Appendix A]{af2} to show that the raising operator $\tilde R^{(k_s)}$ can be expressed as a Pfaffian.  For this, it suffices to check that the operators
\[
  H_{xy} = \frac{T_y - \delta_x \delta_y T_x}{T_y + \delta_x \delta_y T_x( 1-\beta T_y)}
\] 
satisfy the identity
\[
  H_{xy} H_{xz} H_{yz} = H_{yz} - (2\delta_x-1)(2\delta_y-1)H_{xz} + (2\delta_x-1)(2\delta_y-1)H_{xy}
\]
modulo relations $\delta_i^2=\delta_i$, which is easily done using computer algebra.

The proof of Theorem~\ref{t.mainD} follows the same pattern as in other types.  We will need a lemma for the basic case.

\begin{lemma}\label{l.basicD}
Let $\Omega=\Omega_{(1,n-1,q)}$ be the locus where $E_{n-1}=D_1\subseteq F_q$.  Then
\[
  [\Omega] = \frac{1-\beta T}{2-\beta T}(1-\beta T)^{-n+1-q}\cdot c_{n-1+q}(V-D_1-F_q) - \frac{1}{2} e(D_1,F_q),
\]
where
\[
  e(D_1,F_q) = (-1)^{\dim(E\cap F)} \gamma(E,F) c_{n-1+q}(E/D_1+F/F_q)
\]
for maximal isotropic bundles $E\supseteq D_1$ and $F\supseteq F_q$.
\end{lemma}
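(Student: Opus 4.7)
The plan is to imitate the type B basic case (Lemma~\ref{l.basicB}), with the Euler class entering as a correction term specific to even orthogonal geometry. The argument proceeds in three main steps.

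First, I would factor the condition $D_1 \subseteq F_q$ through the intermediate $F_q^\perp$. The locus $\Omega$ is cut out of $\Omega' := \{D_1 \subseteq F_q^\perp\}$ by the further vanishing of the induced map $D_1 \to G$, where $G := F_q^\perp/F_q$ is a rank-$2q$ bundle equipped with a nondegenerate quadratic form. Using the identification $V/F_q^\perp \cong F_q^*$ from the quadratic form, Property (a) of Appendix~\ref{s.kthy} gives $[\Omega'] = c_{n-q}(F_q^* \otimes D_1^*)$, a Chern-class contribution of degree $n-q$.

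Next, on $\Omega'$ the image of $D_1$ in $G$ is automatically isotropic, so the further vanishing locus has codimension $2q-1$ rather than $2q$, and its class cannot be computed as a naive top Chern class of $G \otimes D_1^*$. Here I would invoke the Euler class machinery of Appendix~\ref{s.euler}: this is the basic even-orthogonal vanishing problem for the quadratic bundle $G$ with its isotropic line image. Choosing locally maximal isotropic extensions $E \supseteq D_1$ and $F \supseteq F_q$, the class should decompose into (i) a Chern-class term of degree $2q-1$ arising from pushforward along the quadric bundle of isotropic lines in $G$, carrying the prefactor $(1-\beta T)/(2-\beta T)$ familiar from type B, and (ii) a correction of degree $2q-1$ involving $\gamma(E, F)$ and appropriate Chern classes of $E$ and $F$, with coefficient $-\frac{1}{2}$ reflecting the two connected components of the isotropic Grassmannian $\mathbf{OG}(n, V)$.

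Finally, I would multiply the two contributions. Applying the Whitney formula to the virtual decomposition $V - F_q \sim F_q^* + G$ combines the Chern-class portions into the first term $\frac{1-\beta T}{2-\beta T}(1-\beta T)^{-(n+q-1)}c_{n+q-1}(V-D_1-F_q)$ of the lemma, using $c(D_1 \otimes D_1^*) = 1$ to absorb the excess line-bundle twist. The correction term, when multiplied by $[\Omega']$, has total degree $n+q-1$ and should identify with $-\tfrac{1}{2}e(D_1, F_q)$ via the appendix's formula $e(D_1, F_q) = (-1)^{\dim(E\cap F)}\gamma(E, F)\,c_{n-1+q}(E/D_1 + F/F_q)$, using that $e$ is well-defined independently of the maximal isotropic extensions.

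The main obstacle is the second step, specifically pinning down the $-\frac{1}{2}$ coefficient in front of the Euler term. This factor is precisely why the formula requires working in $CK^*(X)[\frac{1}{2}]$, and its appearance reflects the two connected components of $\mathbf{OG}(n, V)$ distinguished by the parity of $\dim(E \cap F)$. The essential input will be the appendix's identification of $\gamma(E, F)$ as the canonical square root of $c(V-E-F;\beta)$, from which both the sign $(-1)^{\dim(E\cap F)}$ and the correct normalization of the Euler class should emerge naturally once the quadric-bundle pushforward is split according to the two rulings.
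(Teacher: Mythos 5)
Your route (factoring the condition $D_1\subseteq F_q$ through $F_q^\perp$ and reducing to an isotropic vanishing problem for $G=F_q^\perp/F_q$) is genuinely different from the paper's. The paper works directly on the quadric bundle $\pi\colon\cQ(V)\to X$ with tautological isotropic line $S\subseteq V$: it writes $[\Omega]=\pi_*\bigl(c_{n-1}(E/D_1\otimes S^*)\,c_q(F/F_q\otimes S^*)\,ef\bigr)$ with $e=[\PP(E)]$, $f=[\PP(F)]$, expands the integrand in powers of $h=c_1(S^*)$, and applies the two pushforward formulas of Appendix~\ref{s.euler}; the $k>0$ terms produce the $(1-\beta T)/(2-\beta T)$ Chern-class summand, and the $k=0$ term $\pi_*(ef)$ produces the Euler-class correction via Theorem~\ref{t.push}.

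The gap is in your step 2, which you flag as the main obstacle but leave unresolved. You never compute the class of the isotropic vanishing locus of the induced section $D_1\to G$, and there is no statement in Appendix~\ref{s.euler} that covers it. Concretely: (i) the bundle $G\otimes D_1^*$ carries only a $(D_1^*)^{\otimes 2}$-valued quadratic form, so Theorem~\ref{t.euler} does not apply to it directly; (ii) the correction in the lemma is $(-1)^{\dim(E\cap F)}\gamma(E,F)\,c_{n-1+q}(E/D_1+F/F_q)$, built from maximal isotropic subbundles $E,F$ of $V$---while $F/F_q$ is maximal isotropic in $G$, the bundle $E$ does not give a maximal isotropic subbundle of $G$ (over $\Omega'$ the rank of $E\cap F_q^\perp$ jumps), so it is unclear how $\gamma(E,F)$ or the sign $(-1)^{\dim(E\cap F)}$ could be extracted from data intrinsic to $G$; (iii) the intended decomposition of $[\Omega]$ into $[\Omega']$ times a class supported on $\Omega'$ needs care, since $\Omega'$ is singular, and your sketch does not say how to make the multiplication rigorous; (iv) in step 3, the reconstitution $c(V-D_1-F_q)=c(G)\,c(F_q^*-D_1)$ requires the degree-$(2q-1)$ piece to be of the form $c(G-\,\cdot\,)$, which does not obviously come out of the (unstated) Euler-class computation on $G$. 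The paper's proof avoids all of these difficulties by never leaving the quadric bundle of $V$; the Euler class enters exactly once, through the single pushforward $\pi_*(ef)$.
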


\begin{proof}
Consider the quadric bundle $\pi\colon \cQ(V) \to X$, and let $S\subseteq V$ be the tautological line bundle.  As in \cite[Appendix B]{af2}, we have
\[
  [\Omega] = \pi_*\left( c_{n-1}(E/D_1\otimes S^*) c_q(F/F_q\otimes S^*) ef \right),
\]
where $e=[\PP(E)]$ and $f=[\PP(F)]$ in $CK^*(\cQ(V))$.  Letting $h=c_1(S^*)$ and $c=c(V-E-F)$, we will use the formulas
\begin{align}
  \pi_*\left( \frac{h^k}{(1+\beta h)^{k-a}} ef \right) &= \frac{1-\beta T}{2-\beta T}(1-\beta T)^{-a} c_k \label{basicD1}
\end{align}
for $k>0$ and $a\geq 0$, and
\begin{align}
  \pi_*(ef) &= 1-\frac{1}{2}(1+(-1)^{\dim(E\cap F)}) \gamma(E,F) \nonumber \\ 
    &= \frac{1-\beta T}{2-\beta T}\cdot c_0 -\frac{1}{2}(-1)^{\dim(E\cap F)}\gamma(E,F), \label{basicD2}
\end{align}
both formulas taking place in $CK^*(X)[\frac{1}{2}]$.

The general case of \eqref{basicD1} is easily deduced from the case $a=k$.  To establish this case, let $\iota\colon \cQ(V) \hookrightarrow \PP(V)$ be the inclusion.  We have two expressions for $\iota^*\iota_*(f)$:
\begin{align*}
  \iota^*\iota_*(f) &= c_1({S^*}^2)\cdot f = h (2+\beta h) f
\intertext{and}
  \iota^*\iota_*(f) &= \iota^*[\PP(F)] = c_n(V/F\otimes S^*) = (1-\beta T)^{-n} c_n(V-S-F).
\end{align*}
Using $2+\beta h = (2+\beta t)(1+\beta t)^{-1}$, where $t=c_1(S)$, and noting that $t$ acts as $-T$ on $c(V-S-F)$, we obtain
\[
  h\cdot f = \frac{1-\beta T}{2-\beta T} (1-\beta T)^{-n} c_n(V-S-F).
\]
Multiplying this by $h^{k-1} = (-t)^k/(1+\beta t)^k$ gives
\[
  h^k\cdot f = \frac{1-\beta T}{2-\beta T} (1-\beta T)^{-n+1-k} c_{n-1+k}(V-S-F).
\]
The formula for $\pi_*(h^k e f)$ follows, by factoring the projection $\rho\colon \PP(E) \to X$ into the inclusion $\PP(E) \hookrightarrow \cQ(V)$ and the projection $\pi\colon \cQ(V) \to X$.

Formula \eqref{basicD2} follows from Theorem~\ref{t.push} of Appendix~\ref{s.euler}, using the identity $\gamma(E,F)=c(V-E-F;\beta/2)$ (Equation~\eqref{e.half} of Appendix~\ref{s.euler}) and writing
\[
  \frac{1-\beta T}{2-\beta T} = 1 - \frac{1}{2}\left(\frac{1}{1-(\beta/2)T}\right).
\]

To finish proving the lemma, we compute:
\begin{align*}
  [\Omega] &= \pi_*\left( c_{n-1}(E/D_1\otimes S^*) c_q(F/F_q\otimes S^*) ef \right)\\
   &= \pi_*\left(  \sum_{k=0}^{n-1+q} c_{n-1+q-k}(E/D_1+F/F_q)\left(\frac{h^k}{(1+\beta h)^{k-n+1-q}}\right)  ef \right) \\
  &= \sum_{k=0}^{n-1+q} c_{n-1+q-k}(E/D_1+F/F_q)\, \pi_*\left( \left(\frac{h^k}{(1+\beta h)^{k-n+1-q}}\right) ef \right) \\
  &= c_{n-1+q}(E/D_1+F/F_q)\,\pi_*( (1+\beta h)^{n-1+q} ef ) \\
  &\qquad + \sum_{k=1}^{n-1+q} c_{n-1+q-k}(E/D_1+F/F_q)\,\left(\frac{1-\beta T}{2-\beta T}\right) (1-\beta T)^{-n+1-q} c_k.
\end{align*}
The claim is then proved by expanding the factor
\begin{align*}
  \pi_*( (1+\beta h)^{n-1+q} ef ) &= \pi_*(ef) + \sum_{k=1}^{n-1+q} \binom{n-1+q}{k}\beta^k \frac{1-\beta T}{2-\beta T}(1-\beta T)^{-k} c_k \\
  &= \sum_{k=0}^{n-1+q} \binom{n-1+q}{k}\beta^k \frac{1-\beta T}{2-\beta T}(1-\beta T)^{-k} c_k \\
  &\qquad - \frac{1}{2}(-1)^{\dim(E\cap F)}\gamma(E,F) \\
  &= \frac{1-\beta T}{2-\beta T}(1-\beta T)^{-n+1-q} c_0 - \frac{1}{2}(-1)^{\dim(E\cap F)}\gamma(E,F)
\end{align*}
and recombining terms.
\end{proof}

Now we prove the theorem, focusing on the points where the argument differs from other types.

\begin{proof}[Proof of Theorem~\ref{t.mainD}]
The basic case is precisely Lemma~\ref{l.basicD}, so we start with the dominant case, where the main new features arise.   
Here $k_i=i$ for all $i$, and $p_i=n-i$, so $\lambda_i=n-i+q_i$.  Let $D_i=E_{n-i}/E_{n-1-i}$.  As in type C, we obtain a product of factors coming from iterations of the basic case: $[\Omega] = \zeta_1\cdots \zeta_s$, where the $i^{\mathrm{th}}$ factor is
\begin{align}
\zeta_i &= \frac{1-\beta T}{2-\beta T}(1-\beta T)^{-n+i-q_i} c_{n-i+q_i}(D_{i-1}^\perp/D_{i-1} - D_i/D_{i-1} - F_{q_i}/D_{i-1}) \\
& \quad - \frac{1}{2}(-1)^{\dim(E/D_{i-1}\cap F/D_{i-1})}\gamma(E/D_{i-1},F/D_{i-1}) \,c_{n-i+q_i}(E/D_i+F/F_{q_i}).  \nonumber
\end{align}

First, we claim that for $a\geq 1$,
\[
  t_i^a \cdot \zeta_i =  \frac{1-\beta T}{2-\beta T}(1-\beta T)^{-n+i-q_i}c_{n-i+q_i+a}(D_{i-1}^\perp/D_{i-1} - D_i/D_{i-1} - F_{q_i}/D_{i-1}),
\]
where $t_i = -c_1(D_i/D_{i-1})$.  This will show that $t_i$ acts as $\delta_i T_i$ on the factor $\zeta_i$.

It suffices to treat the case $i=1$, since one can replace $V$ by $D_{i-1}^\perp/D_{i-1}$.  We may also assume $a=1$, since for $a>1$ we have $t_1^{a-1}\cdot c_{n+q_1}(V - D_1 - F_{q_1}) = c_{n+q_1+a-1}(V - D_1 - F_{q_1})$, by property (b) of Appendix~\ref{s.kthy}.  Now we compute:
\begin{align*}
  t_1\cdot \zeta_1 &= \frac{1-\beta T}{2-\beta T}(1-\beta T)^{-n+1-q_1}\sum_{k=1}^{n+q_1} t_1^k c_{n+q_1-k}(V/F_{q_1})  \\
& \quad + \frac{1}{2}(-1)^{\dim(E\cap F)}\gamma(E,F) \,c_{n+q_1}(E+F/F_{q_1}) \\
&= \frac{1-\beta T}{2-\beta T}(1-\beta T)^{-n+1-q_1}c_{n+q_1}(V-D_1-F_{q_1}) - \frac{1}{2}c_{n+q_1}(V/F_{q_1})  \\
& \quad + \frac{1}{2}(-1)^{\dim(E\cap F)}\gamma(E,F) \,c_n(E)\, c_{q_1}(F/F_{q_1}) \\
&= \frac{1-\beta T}{2-\beta T}(1-\beta T)^{-n+1-q_1}c_{n+q_1}(V-D_1-F_{q_1}) - \frac{1}{2}c_{n+q_1}(V/F_{q_1})  \\
& \quad + \frac{1}{2}c_n(F^*)\, c_{q_1}(F/F_{q_1}) \\
&= \frac{1-\beta T}{2-\beta T}(1-\beta T)^{-n+1-q_1}c_{n+q_1}(V-D_1-F_{q_1}) ,
\end{align*}
using the relation
\[
  c_n(F^*) = (-1)^{\dim(E\cap F)}\gamma(E,F)\, c_n(E)
\]
in the penultimate line.  This establishes the claim.

Next, using the identity
\begin{align*}
  c(D_{i-1}^\perp/D_{i-1} - D_i/D_{i-1} - F_{q_i}/D_{i-1}) &= c(V-D_i-F_{q_i})\cdot c(D_{i-1}-D_{i-1}^*) \\
  &= c(i) \prod_{j=1}^{i-1} \frac{(1-t_j)(1-\beta t_j)}{1+t_j-\beta t_j},
\end{align*}
as in type C, together with the fact that $\dim(E/D_{i-1}\cap F/D_{i-1}) = \dim(E\cap F) -i+1$ and the identity
\begin{align*}
 \gamma(E/D_{i-1},F/D_{i-1}) &= \gamma(E,F)\cdot c(D_{i-1};\beta) \\
    &= \gamma(E,F) \prod_{j=1}^{i-1} (1-\beta t_j),
\end{align*}
the factor $\zeta_i$ can be written
\begin{align*}
 \prod_{j=1}^{i-1}(1-\beta t_j) \left( \frac{1-\beta T}{2-\beta T}(1-\beta T)^{-\lambda_i}\left[ c(i)\prod_{j=1}^{i-1}\frac{1-t_j}{1+t_j-\beta t_j} \right]_{\lambda_i}  + \frac{1}{2}(-1)^i e(i) \right).
\end{align*}
Combined with the calculation of $t_i^a\cdot \zeta_i$, this proves the formula in the dominant case.

\medskip
The main case, where $k_i=i$ for all $i$, is deduced from the dominant case exactly as in type C.  Passing from this case to the general case is similar, using the relation provided by Lemma~\ref{l.drel}.  
\end{proof}

\begin{lemma}\label{l.drel}
Let $c(1)=c(2)=c(V-E_p-F_q)$ and $e(1)=e(2)=e(E_p,F_q)$, so $d(1)=c(1)-e(1)$ and $d(2)=c(2)+e(2)$.  Then for any $m$,
\[
  \frac{1-\delta_1\delta_2 R_{12}}{1+\delta_1\delta_2(R_{12}-\beta T_1)}\frac{(1-\beta\tilde T_1)^{-m+1}}{2-\beta\tilde T_1} \frac{(1-\beta\tilde T_2)^{-m+1}}{2-\beta\tilde T_2}\cdot d_\lambda(1)\, d_\lambda(2) = 0,
\]
where $\lambda=p+q$.  The same holds with $\tilde{d}(1)=c(1)+e(1)$ and $\tilde{d}(2)=c(2)-e(2)$ in place of $d(1)$ and $d(2)$.
\end{lemma}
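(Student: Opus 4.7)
The strategy is to mimic the proof of Lemma~\ref{l.relation}: compute the class of a single locus in two different ways using the main and dominant cases of Theorem~\ref{t.mainD} (both already established at this point of the proof), then combine them to obtain the asserted identity modulo a unit. The extra work compared to type C consists of carefully tracking the Euler-class contributions.

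Let $\Omega = \{x \in X : \dim(E_p \cap F_q) \geq 2\}$. Since this forces $\dim(E_{p+1} \cap F_q) \geq 1$ for any rank-$(n-p-1)$ isotropic sub-bundle $E_{p+1} \subseteq E_p$, the defining triple $(2, p, q)$ can be inflated to $(1\;2,\,(p+1)\;p,\,q\;q)$ without changing $\Omega$. This inflated triple is a main-case triple with $\lambda'_1 = \lambda+1$ and $\lambda'_2 = \lambda$. Writing $a = c_1(E_p/E_{p+1})$, one has $c'(1) = c(1)(1+a)$; using the dimension vanishing $c_{\lambda+1}(E/E_p + F/F_q) = 0$, the Euler class computes as $e'_{\lambda+1}(1) = a\cdot e_\lambda(1)$, so $d'_{\lambda+1}(1) = (\tilde T_1 + a)\,d_\lambda(1)$ and $d'_\lambda(2) = d_\lambda(2)$. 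Substituting into the raising-operator formula~\eqref{e.raisingD} for the main case yields
\begin{equation*}
  [\Omega] = \frac{1-\delta_1\delta_2 R_{12}}{1+\delta_1\delta_2(R_{12}-\beta T_1)}\cdot \frac{(1-\beta\tilde T_1)^{-\lambda+1}}{2-\beta\tilde T_1}\cdot \frac{(1-\beta\tilde T_2)^{-\lambda+1}}{2-\beta\tilde T_2}\,(\tilde T_1 + a)\,d_\lambda(1)\,d_\lambda(2).
\end{equation*}

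For the second computation, I apply the main-case resolution to the ``improper triple'' $(1\;2,\,p\;p,\,q\;q)$ just as in the proof of Lemma~\ref{l.relation}. Form $X_1 = \PP(E_p)$ and $X_2 = \Fl(1,2;E_p)$, with tautological sub-bundles $D_1 \subset D_2 \subset E_p$ (automatically isotropic, since $E_p$ is). The locus $\tilde\Omega = \{D_1 \subseteq F_q,\,D_2 \subseteq F_q\} \subseteq X_2$ is an instance of the dominant case of Theorem~\ref{t.mainD} with $\tilde\lambda_i = n-i+q$. The composition $\pi\colon X_2 \to X$ factors through $\bar\Omega = \{D_2 \subseteq F_q\} \subseteq \Gr(2, E_p)$ via a $\PP^1$-bundle followed by a birational map, so $\pi_*[\tilde\Omega] = -\beta\,[\Omega]$ by Property~(c) of Appendix~\ref{s.kthy}. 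Pushing the dominant-case formula forward using the $d$-class analog of~\eqref{e.push} (the same pushforward computation that underlies the main case of the proof of Theorem~\ref{t.mainD}) gives
\begin{equation*}
  -\beta\,[\Omega] = \frac{1-\delta_1\delta_2 R_{12}}{1+\delta_1\delta_2(R_{12}-\beta T_1)}\cdot \frac{(1-\beta\tilde T_1)^{-\lambda+2}}{2-\beta\tilde T_1}\cdot \frac{(1-\beta\tilde T_2)^{-\lambda+1}}{2-\beta\tilde T_2}\,d_\lambda(1)\,d_\lambda(2).
\end{equation*}

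Adding $\beta$ times the first equation to the second, the bracketed coefficient of $d_\lambda(1)\,d_\lambda(2)$ collapses as
\[
 (1-\beta\tilde T_1)^{-\lambda+1}\bigl[\beta(\tilde T_1 + a) + (1-\beta\tilde T_1)\bigr] = (1-\beta\tilde T_1)^{-\lambda+1}(1+\beta a).
\]
Since $a$ is nilpotent, $(1+\beta a)$ is a unit and can be cleared, producing the first identity of the lemma. The $\tilde d$ version arises in the $k_s$-odd case of Theorem~\ref{t.mainD} and follows by the same argument with the signs of the Euler-class contributions flipped throughout (equivalently, by exchanging the roles of the two factors). The main obstacle is the verification that the dominant-case expression pushes forward with the correct Euler-class contribution, i.e., establishing the $d$-class analog of~\eqref{e.push}; but this bookkeeping has already been carried out in the dominant and main cases of the proof of Theorem~\ref{t.mainD}, so no genuinely new geometric input is required.
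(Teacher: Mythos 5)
Your argument for $\lambda>0$ is exactly the paper's intended route: the paper compresses it to "the argument for the case where $\lambda>0$ proceeds just as in type C," and you have correctly filled in the one genuinely new piece, the tracking of the Euler-class term $e'_{\lambda+1}(1) = a\,e_\lambda(1)$ (valid because $c_{\lambda+1}(E/E_p + F/F_q)=0$ in top degree) so that $d'_{\lambda+1}(1) = (\tilde T_1 + a)\,d_\lambda(1)$. The combination step and the extraction of the unit $(1+\beta a)$ match the type C argument. However, you should state explicitly that you are assuming $\dim(E\cap F)$ is even (so $(-1)^{\dim(E\cap F)}=1$) throughout the geometric computation; this is needed for the birationality of $\bar\Omega\to\Omega$ and for the Euler classes to match the $d(1)=c(1)-e(1)$, $d(2)=c(2)+e(2)$ convention.

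Two points diverge from the paper. First, the paper treats $\lambda=0$ as a separate base case with a purely algebraic computation using $\gamma(E,F)=c(\beta/2)$ and $(c_0-\gamma)(c_0+\gamma)=0$, while you present the geometric argument as covering all $\lambda\ge 0$. Your computations do appear to go through at $\lambda=0$ (e.g.\ $e'_1(1)=a\,e_0(1)=a\gamma(E,F)$ and the $\PP^1$-bundle fact still hold), but since this is the new type D phenomenon you should say so explicitly rather than leave it implicit. Second, your justification for the $\tilde d$ version is imprecise: the parenthetical "equivalently, by exchanging the roles of the two factors" is incorrect, because the raising operator $\dfrac{1-\delta_1\delta_2 R_{12}}{1+\delta_1\delta_2(R_{12}-\beta T_1)}$ is not symmetric under $1\leftrightarrow 2$ (it uses $R_{12}$ and $\beta T_1$, not $R_{21}$ and $\beta T_2$). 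The clean argument, which the paper uses, is to multiply the proved $d$-identity by the unit $(2\delta_1-1)(2\delta_2-1)$, which commutes with the raising operator, squares to $1$, and sends $d_\lambda(i)$ to $\tilde d_\lambda(i)$. You should replace your handwaving here with that observation; note also that "the $k_s$-odd case of Theorem~\ref{t.mainD}" is not quite the right characterization of when the $\tilde d$ relation is needed—it depends on the parity of the duplicated index $k_\ell$, not of $k_s$.
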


\begin{proof}
To pass between $d$ and $\tilde{d}$, multiply the formula by the unit $(2\delta_1-1)(2\delta_2-1)$, which has the effect of switching the signs on $e(1)$ and $e(2)$.  Geometrically, this toggles between the cases where $\dim(E\cap F)$ is even or odd.  Let us assume the former, so $(-1)^{\dim(E\cap F)}=1$.

In the case $\lambda=m=0$, so $p=q=0$, then writing $\gamma=\gamma(E,F)$, the relation becomes
\begin{align*}
 & \frac{1-\delta_1\delta_2 R_{12}}{1+\delta_1\delta_2(R_{12}-\beta T_1)}\frac{1-\beta\tilde T_1}{2-\beta\tilde T_1} \frac{1-\beta\tilde T_2}{2-\beta\tilde T_2}\cdot (c_0 - \gamma(E,F) )\, (c_0+\gamma(E,F)) \\
  &\quad = \frac{1}{2-\beta\tilde T_1} \frac{1-\beta\tilde T_2}{2-\beta\tilde T_2} (c_0 - \gamma) (c_0 + \gamma ) \\
  &\quad = \left( \frac{1}{2}c(\beta/2) - \frac{1}{2}\gamma \right) \left( 1 - \frac{1}{2}\gamma(\beta/2) + \frac{1}{2}\gamma \right) \\
  &\quad = 0\cdot 1 = 0,
\end{align*}
as claimed.  The argument for the case where $\lambda>0$ and general $m$ proceeds just as in type C.
\end{proof}

We obtain a formula for skew-symmetric morphisms similar to the one for symmetric morphisms.  
Consider a morphism $\phi\colon W\to W^*$ so that the dual $\phi^\vee \colon W^{**} = W \to W^*$ is equal to $-\phi$, and suppose $W$ has rank $n$.  Let $c=c(W^*-W)$, and write
\begin{align*}
  c'_m &= c_m - \frac{1}{2}\beta c_{m+1} - \frac{1}{4}\beta^2 c_{m+2} - \frac{1}{8}\beta^3 c_{m+3} - \cdots\\
     &= c_m - \sum_{k > 0}\frac{1}{2^{k}}\beta^k\, c_{m+k}
\end{align*}
for any $m\in \ZZ$.  Since $c(W^*-W;\beta)=c(W^*;\beta)^2$, its canonical square root is $c(W^*-W;\beta/2)=c(W^*;\beta)$.  This means $c'_0 = 2-c(W^*;\beta)$, and for $m<0$, $c'_{m}=-(\beta/2)^{-m}\,c(W^*;\beta)$.

\begin{corollary}\label{c.pfD}
The locus $\Omega_r$ where $\rk(\phi)\leq r$ has class
\begin{align*}
  [{\Omega_r}] 
        &= \frac{1}{2^{n-r}}\Pf\left(m_{ij} \right)_{1\leq i<j\leq n-r},
\end{align*}
where $\lambda_i = n-r-i$ for $1\leq i\leq n-r$;
\begin{align*} 
  m_{ij} &=  c'_{\lambda_i} \, c'_{\lambda_j} + \sum_{m>0} \beta^m\, c'_{\lambda_i+m}\,c'_{\lambda_j}  \\ 
  &  \qquad + \sum_{\ell>0} \sum_{m\geq 0}  (-1)^\ell \left( \binom{\ell+m-1}{m} + \binom{\ell+m}{m} \right) \beta^m \, c'_{\lambda_i+\ell+m}\, c'_{\lambda_j-\ell}  \nonumber
\end{align*}
for $j<n-r$; and
\begin{align*} 
  m_{i,n-r} &=  \big(2-c(W^*;\beta)\big)\sum_{m\geq 0}\left(\frac{\beta}{2}\right)^m c_{\lambda_i+m} + c(W^*;\beta)\,c'_{\lambda_i} \\
  & \quad -c(W^*;\beta) \sum_{\ell>0}\sum_{m\geq 0} \left(-\frac{1}{2}\right)^\ell \left( \binom{\ell+m-1}{m} + \binom{\ell+m}{m} \right) \beta^{\ell+m} \, c'_{\lambda_i+\ell+m} . \nonumber
\end{align*}
\end{corollary}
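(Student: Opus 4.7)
The plan is to realize $\Omega_r$ as a type D degeneracy locus and apply Theorem~\ref{t.mainD}, mirroring the proof of Corollary~\ref{c.pf}. Equip $V = W\oplus W^*$ with the canonical symmetric bilinear form $\langle v\oplus f,\,w\oplus g\rangle = f(w) + g(v)$, turning $V$ into a rank-$2n$ orthogonal bundle. The graph $E = \Gamma_\phi \subseteq V$ is a maximal isotropic subbundle precisely when $\phi^\vee = -\phi$, and $F = W\oplus 0$ is another maximal isotropic subbundle. Since $E\cap F = \ker\phi$, the condition $\rk\phi\leq r$ is equivalent to $\dim(E\cap F)\geq n-r$, and hence $\Omega_r = \Omega_\triple$ in type D notation for the triple $\triple = (n-r,\,0,\,0)$, whose associated strict partition is $\lambda = (n-r-1,\ldots,1,0)$, matching the indices $\lambda_i = n-r-i$ of the statement.

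To feed Theorem~\ref{t.mainD}, note that both $E$ and $F$ project isomorphically onto $W$, so $c = c(V-E-F) = c(W^*-W)$. Because $E$ and $F$ are maximal isotropic, $e_m(E,F) = 0$ for $m>0$ and $e_0(E,F) = (-1)^{\dim(E\cap F)}\gamma(E,F)$; on the closure locus $\dim(E\cap F)\equiv n-r\pmod 2$, and the identity $c(W^*-W;\beta) = c(W^*;\beta)^2$ (a consequence of $c(W;\beta)\cdot c(W^*;\beta) = 1$ in the multiplicative formal group law) together with its canonical square root $\gamma(E,F) = c(W^*-W;\beta/2) = c(W^*;\beta)$ gives $e_0(E,F) = (-1)^{n-r}c(W^*;\beta)$. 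Factoring the Pfaffian matrix of Theorem~\ref{t.mainD} as $M = S\cdot M'\cdot S^t$ as in that proof pulls out the overall $1/2^{n-r}$ from the $n-r$ leading-order contributions of the factors $(2-\beta\tilde T_i)^{-1}$ in $\det(S)$.

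For entries $m_{ij}$ with $j<n-r$, one has $\lambda_j>0$ and $e_{\lambda_j}(j) = 0$, so the Euler class disappears from the product $(c_{\lambda_i}(i) - (-1)^{n-r}e_{\lambda_i}(i))(c_{\lambda_j}(j) + (-1)^{n-r}e_{\lambda_j}(j))$. The operator identity $2(1-\beta T)/(2-\beta T)\cdot c_m = c'_m$ (extended consistently to negative $m$ by the stated formula $c'_m = -(\beta/2)^{-m}c(W^*;\beta)$, which is forced by the same operator equation combined with $c_m = 0$ for $m<0$) then converts the type D raising operator into the displayed $c'_m$-expression, exactly as in Corollary~\ref{c.pf}. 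The expected main obstacle is the last column $j = n-r$, where $\lambda_{n-r} = 0$ and the Euler class contributes a distinct term: the factor $c_0(n-r) + (-1)^{n-r}e_0(n-r)$ becomes $1 + c(W^*;\beta)$. Expanding the raising operator against the unit summand $1$ produces the ``$(2-c(W^*;\beta))\sum_m(\beta/2)^m c_{\lambda_i+m}$'' piece via the identity $(2-\beta T)^{-1}\cdot 1 = \tfrac12\sum_m(\beta/2)^m c_m$ together with $c'_0 = 2-c(W^*;\beta)$, while expanding against the $c(W^*;\beta)$ summand passes through as an overall factor and yields the $c(W^*;\beta)\cdot c'_{\lambda_i}$ term and its alternating tail. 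The bulk of the remaining work is the binomial-coefficient bookkeeping in the raising-operator expansion, which is identical to that in type C.
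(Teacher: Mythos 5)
Your plan follows the same route as the paper: realize $\Omega_r$ as the type D locus $\Omega_{(n-r,0,0)}$ for $V=W\oplus W^*$ with the symmetric form, apply the raising-operator form \eqref{e.raisingD} of Theorem~\ref{t.mainD}, observe that since $p=q=0$ the Euler class $e(E,F)$ is concentrated in degree $0$ and therefore only enters the $j=n-r$ column (where $\lambda_{n-r}=0$), use $\gamma(E,F)=c(W^*-W;\beta/2)=c(W^*;\beta)$, convert via $\tfrac12 c'_m = \frac{1-\beta T}{2-\beta T}c_m$, and peel off $1/2^{n-r}$ from the scalar factor $(1/4)^{(n-r)/2}$. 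All of that matches the paper, and your observation that $c'_m=-(\beta/2)^{-m}c(W^*;\beta)$ for $m<0$ is forced by the operator identity and $c_{<0}=0$ is exactly the right mechanism.

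One detail is backwards, though. In describing the last column you attribute the first term of $m_{i,n-r}$ to the $c_0(n-r)=1$ summand of $d_0(n-r)$ and the second term together with the ``alternating tail'' to the $e_0$-summand $c(W^*;\beta)$. In fact every nonconstant term of the raising operator carries a factor $\delta_j$, which annihilates $e_0(n-r)$; so the Euler-class summand contributes \emph{only} the single term $\tfrac14 c(W^*;\beta)\,c'_{\lambda_i}$. The entire alternating tail (the $\sum_{\ell>0}\sum_{m\geq 0}$ part) comes from the $c_0(n-r)$ summand, via $R^\ell$ lowering $c'_0(n-r)$ to $c'_{-\ell}(n-r)=-(\beta/2)^\ell c(W^*;\beta)$; the $c(W^*;\beta)$ prefactor you see on that tail is introduced by this substitution, not by the Euler class. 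Tracking this correctly is necessary to see why the constant term of the $c_0$-part carries the $c'_0 = 2-c(W^*;\beta)$ factor while the later terms do not, which is precisely what produces the particular mix of $(2-c(W^*;\beta))$ and $c(W^*;\beta)$ coefficients in the stated $m_{i,n-r}$. Fixing this bookkeeping, the plan is sound and matches the paper's argument.
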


\noindent
The proof is very similar to that of Corollary~\ref{c.pf}.

\begin{proof}
Let $V=W\oplus W^*$, equipped with the canonical symmetric form, $\langle v\oplus f, \, w\oplus g \rangle = f(w) + g(v)$, so the morphism $\phi$ is skew-symmetric if and only if its graph $E = \Gamma_\phi \subseteq V$ is isotropic.  Letting $F=W\oplus 0 \subseteq V$, we see $\rk(\phi)\leq r$ is equivalent to $\dim(E\cap F) \geq n-r$, so the locus $\Omega_r$ is the degeneracy locus associated to $\triple =(n-r,0,0)$.  The partition is $\lambda = (n-r-1,\ldots,1,0)$, i.e., $\lambda_i = n-r-i$, for $1\leq i\leq n-r$.

It is easiest to deduce the corollary from the raising operator version \eqref{e.raisingD} of Theorem~\ref{t.mainD}.  This formula reduces to
\begin{align*}
  \left( \prod_{1\leq i<j\leq n-r}  \frac{1-\delta_i\delta_j R_{ij}}{1+\delta_i \delta_j (R_{ij}-\beta T_i)}\right) \left(\prod_{i=1}^{n-r} \frac{1-\beta\tilde T_i}{2-\beta\tilde T_i}  d_{\lambda_i}(i) \right),
\end{align*}
which is the Pfaffian of a matrix whose $(i,j)$ entry is
\begin{equation}\label{e.dcorij}
  \frac{1-\delta_i\delta_j R_{ij}}{1+\delta_i \delta_j (R_{ij}-\beta T_i)} \cdot \frac{1-\beta\tilde T_i}{2-\beta\tilde T_i} \cdot \frac{1-\beta\tilde T_j}{2-\beta\tilde T_j}\cdot d_{\lambda_i}(i) \,  d_{\lambda_j}(j) .
\end{equation}
The Chern classes are $c(V-E-F) = c(V/E - F) = c(W^*-W)$.  The Euler class is $e=(-1)^{n-r}\gamma(E,F) = (-1)^{n-r}c(W^*;\beta)$, and it only appears in $m_{ij}$ when $j=n-r$.  Using $c'_0=2-c(W^*;\beta)$, the factor $(c'_{0}+(-1)^{n-r}e_0)$ reduces to $2$.  
From the definition,
\[
\frac{1}{2}c'_m = \frac{1-\beta T}{2-\beta T} c_m,
\]
so for $j<n-r$, the entry \eqref{e.dcorij} is
\[
 \frac{1-R_{ij}}{1+R_{ij}-\beta T_i} \cdot \frac{1}{4}\, c'_{\lambda_i}(i) \,  c'_{\lambda_j}(j) ,
\]
which equals $(1/4)m_{ij}$.  For $j=n-r$, one obtains $(1/4)m_{i,n-r}$ using
\begin{align*}
  \frac{1-\beta \tilde T}{2-\beta \tilde T}\cdot d_0 &= \frac{1}{2}\big( c'_0 + (-1)^{n-r}e_0 \big) \\
  &= 1
\end{align*}
and
\[
 \sum_{m\geq 0} \beta^m\,c'_{\lambda_i+m} = \sum_{m\geq 0} \left(\frac{\beta}{2}\right)^m c_{\lambda_i+m}. \qedhere
\]
\end{proof}
%

%%%%%%%%%%%%%%%%%%%%%%%%%%%%%%%%%%%%%%%%
\section{$\beta$-theta and $\beta$-eta polynomials}\label{s.theta}
%%%%%%%%%%%%%%%%%%%%%%%%%%%%%%%%%%%%%%%%

The above Pfaffian formulas for vexillary degeneracy loci of types B, C, and D do not include all loci coming from isotropic Grassmannians.  In general, in addition to conditions on intersections between two isotropic subspaces, one must impose conditions of the form
\[
  \dim(E_p \cap F_{q}) \geq k,
\]
where $E_p$ is isotropic and $F_{q}$ is coisotropic---that is, $F_{q}\supseteq F_{q}^\perp$.  

Conditions involving coisotropic subspaces introduce another layer of complication.  
The main difficulty lies in deciding which conditions are allowed, and which are redundant.  In the isotropic Grassmannian case, where $p$ is fixed, the loci can be indexed by {\em $p$-strict partitions}, following \cite{bkt1,bkt2}.  The more general loci we consider here could be called ``theta-vexillary'', because their classes (in type C) are flagged theta-polynomials.  They are described by certain triples $\triple=(\bk,\bp,\bq)$, where $\bk$ and $\bp$ are as before, but $\bq$ is allowed to include negative entries---corresponding to the coisotropic bundles.  The requirements on $\triple$ and the definition of the loci $\Omega_\triple$ are the same as in \cite{af2}.  We will quickly review the setup below, referring to [{\it op.~cit.}] for further explanation.

The Chern class formulas we obtain for theta-vexillary loci naturally extend the cohomology formulas from \cite{af2}, which are recovered by taking $\beta=0$.  In types B and C, the ``main cases'' of the Grassmannian formulas (where $k_i=i$ and $p_i=p$ for all $i$) again recover the results of \cite{himn}.  The ``general case'' is new, as are the type D formulas.  

Once one knows the formulas, the proofs are straightforward combinations of the arguments from \cite{af2} and the methods used in proving Theorems~\ref{t.mainA}--\ref{t.mainD} above, so here I will only give a very quick indication of the main point.

\subsection{Type C}

First we will describe the algebra, and then the geometry.

Given a sequence $\rho=(\rho_1,\ldots,\rho_\ell)$ of nonnegative integers, let $\rho'=(\rho'_1,\ldots,\rho'_{\ell'})$ be the sequence defined by
\[
  \rho'_i = \#\{j \,|\, \rho_j\geq i\}.
\]
(This is borrowed from notation for conjugate partitions.)  We will assume there is a distinguished index $k$ such that $\rho$ satisfies
\[
\rho_j=j-1  \qquad \text{ for } j\leq k
\]
and
\[
 \rho_k \geq \rho_{k+1}\geq \cdots \geq \rho_\ell \geq 0,
\]
and then define a raising operator
\begin{align*}
  R^{(\rho)} &= \prod_{1\leq i\leq \rho_j < j\leq \ell}\frac{1-\beta T_i}{1+R_{ij}-\beta T_i} \cdot \prod_{1\leq i<j\leq \ell} (1-R_{ij}) \\
 &= \prod_{j=1}^{\ell} (1-\beta T_j)^{\rho'_j} \cdot \prod_{j=k+1}^{\ell} \frac{\prod_{i=1}^{k-1} (1-R_{ij})}{\prod_{i=1}^{\rho_j} (1+R_{ij}-\beta T_i)} \\
 & \qquad  \times \prod_{1\leq i<j\leq k} \frac{ 1-R_{ij} }{1+R_{ij}-\beta T_i} \cdot \prod_{k\leq i<j\leq \ell} (1-R_{ij}).
\end{align*}
(The second formulation appears longer, but it is useful in exhibiting the Pfaffian and determinantal factors, as the two products on the last line.)

A \define{$\rho$-strict partition} is a partition $\lambda = (\lambda_1,\ldots,\lambda_\ell)$ such that $\lambda +\rho = (\lambda_1+\rho_1,\ldots,\lambda_\ell+\rho_\ell)$ is also a partition.  The \define{$\beta$-theta} polynomial for a $\rho$-strict partition is defined as
\[
  \Theta^{(\rho)}_\lambda(c(1),\ldots,c(\ell);\beta) := R^{(\rho)}\cdot \prod_{i=1}^\ell (1-\beta T_i)^{-\lambda_i} c_{\lambda_i}(i),
\]
where $c(i)=1+c_1(i)+c_2(i)+\cdots$ are indexed variables on which the operator $T_i$ acts.

For the geometry, we need to say what triples are allowed.  As defined in \cite{af2}, a \define{triple} is $\triple=(\bk,\bp,\bq)$, where the sequences
\begin{align*}
 0 < &k_1 < \cdots < k_s, \\
  & p_1 \geq \cdots \geq p_s > 0, \; \text{ and }\\
  & q_1 \geq \cdots \geq q_s
\end{align*}
satisfy several conditions.  The $q_i$ may not be zero, so there is a distinguished index $a$ so that $q_a>0>q_{a+1}$.  If $p_s=1$, then all $q$'s must be positive.

For each $i\leq a$, let $m(i) = \min\{ m\,|\, q_i+(k_i-k_{i-1}-1)\geq q_m\}$.  Then the negative values
\[
 -q_i, -q_i-1, \ldots, -q_i-(k_i-k_{m(i)}-1)
\]
may not occur among the $q_j$, for $j>a$.

For each $i>a$, define $\rho_{k_i} := k_j$, where $j$ is the index so that $q_j>-q_i>q_{j+1}$.  This is filled out to a sequence $\rho=\rho(\triple)$ by setting $\rho_k=k-1$ for $k\leq k_a$, and $\rho_k=\rho_{k_i}$ for $k>k_a$ and $k_{i-1}<k\leq k_i$.

Finally, we define a partition $\lambda=\lambda(\triple)$ by
\[
  \lambda_{k_i} = \begin{cases} p_i+q_i-1 & \text{ if }i\leq a; \\ p_i+q_i+k_i-1-\rho_{k_i} & \text{ if } i>a, \end{cases}
\]
and filling in other parts minimally, subject to the requirement
\[
  \lambda_1>\cdots>\lambda_{k_a}>\lambda_{k_a+1}\geq \cdots \geq \lambda_{k_s}>0.
\]
The final condition on $\triple$ requires that $\lambda$ be a $\rho$-strict partition.

Now let $V$ be a vector bundle of rank $2n$, with a symplectic form.  Associated to a triple $\triple$, we have flags of subbundles
\[
  E_{p_1}\subseteq \cdots \subseteq E_{p_s} \qquad \text{ and } \qquad F_{q_1} \subseteq \cdots \subseteq F_{q_s},
\]
where $E_{p_i}$ is isotropic and has rank $n+1-p_i$ for all $i$; $F_{q_i}$ is isotropic when $q_i>0$, and has rank $n+1-q_i$ in this case; and $F_{q_i}$ is coisotropic when $q_i<0$, and has rank $n-q_i$ in this case.  The degeneracy locus is defined as usual:
\[
  \Omega_\triple = \{ x\in X\,|\, \dim(E_{p_i}\cap F_{q_i}) \geq k_i \text{ for }1\leq i\leq s\}.
\]

Here is the formula.
\renewcommand{\thethm}{5C}
\begin{thm}\label{t.thetaC}
Given a triple $\triple$, let $\rho=\rho(\triple)$ and let $\lambda=\lambda(\triple)$ be the corresponding $\rho$-strict partition.  In $CK^*(X)$, we have
\begin{align*}
   [\Omega_\triple] &= \Theta^{(\rho)}_{\lambda}(c(1),\ldots,c(k_s);\beta) ,
\end{align*}
where $c(k_i) = c(V-E_{p_i}-F_{q_i})$, and $c(k)=c(k_i)$ when $k_{i-1}<k\leq k_i$.
\end{thm}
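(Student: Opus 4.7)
The plan is to adapt the four-step scheme used in the proofs of Theorems~\ref{t.mainA}--\ref{t.mainD}: a basic case, a dominant product formula assembled by iteration, a main case obtained by pushing forward a Kempf-Laksov-style resolution, and a general case obtained by inflation of the triple. The essentially new ingredient, relative to Theorem~\ref{t.mainC}, is a second flavor of basic case handling a coisotropic subbundle, whose contribution to the dominant product gives rise to the determinantal factors in $R^{(\rho)}$.

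For the basic case, when $F_q$ is isotropic ($q>0$) the computation of the class of $\{D_1 \subseteq F_q\}$ is exactly as in the proof of Theorem~\ref{t.mainC}, producing $(1-\beta T)^{-\lambda}c_\lambda(V-D_1-F_q)$ with $\lambda = n+q-1$. When $F_q$ is coisotropic ($q<0$), the same locus is defined by the vanishing of the map $D_1 \to V/F_q$, and since $V/F_q$ has rank $n+q$ with no isotropy pairing to invoke, Property~(a) of Appendix~\ref{s.kthy} yields a pure factor $(1-\beta T)^{-(n+q)}c_{n+q}(V-D_1-F_q)$. Crucially, no $c(D_{i-1}-D_{i-1}^*)$-style correction appears, so no Pfaffian-type denominator $(1+R_{ij}-\beta T_i)^{-1}$ is produced when one iterates in this range.

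For the dominant case ($k_i=i$, $p_i=n+1-i$), set $D_i = E_{n+1-i}$ and iterate the basic case at each level. The isotropic rows ($i\leq a$) produce both $(1-\beta T_i)^{-\lambda_i}$ factors and, via the Knuth identity used in the proof of Theorem~\ref{t.mainC}, the Pfaffian factor $\prod_{1\leq i<j\leq k_a}(1-R_{ij})/(1+R_{ij}-\beta T_i)$. The coisotropic rows ($i>a$) produce plain Vandermonde factors $(1-R_{ij})$ in the range where $\rho_j<i<j$, and combine with the isotropic rows to give mixed factors $(1-R_{ij})/(1+R_{ij}-\beta T_i)$ exactly in the pattern $i\leq \rho_j<j$. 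Reassembling these products reproduces $R^{(\rho)}$ applied to $\prod_i (1-\beta T_i)^{-\lambda_i} c_{\lambda_i}(i)$, as required. The main case ($k_i=i$ but general $p_i$) is then obtained by pushing forward along the tower $X_i = \PP(E_{p_i}/D_{i-1})$ via Property~(c), which converts each $(1-\beta T_i)^{-\tilde\lambda_i}$ into $(1-\beta T_i)^{-\lambda_i}$ with the correct codimension shift, just as in the main case of Theorem~\ref{t.mainC}. Finally, the general case follows by inflation of $\triple$ as in \cite[\S1.4]{af2}, using Lemma~\ref{l.relation} in the Pfaffian indices, the determinantal vanishing identity already used in the proof of Theorem~\ref{t.mainA} in the purely determinantal indices, and hybrid analogues of these at the isotropic-coisotropic boundary.

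I expect the main obstacle to be combinatorial bookkeeping: tracking when an iterated basic case produces a Pfaffian factor, a determinantal factor, or a cross factor, and matching the exponents of $(1-\beta T_i)$ against the definition of $R^{(\rho)}$, especially near the boundary index $k_a$. The underlying algebraic identities were essentially worked out in \cite{af2} for the $\beta=0$ specialization, and the $\beta$-deformation only adds powers of $(1-\beta T_i)$ that can be tracked systematically using Property~(b) of Appendix~\ref{s.kthy}; the hybrid boundary identities needed for inflation are straightforward adaptations of Lemma~\ref{l.relation} combined with the column-expansion argument from the general case of Theorem~\ref{t.mainA}.
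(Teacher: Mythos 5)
Your overall strategy matches the paper's proof (the paper itself only sketches this, referring to the proof of Theorem~\ref{t.mainC} modified along the lines of \cite[Theorem~2]{af2}), and your identification of the basic case, the inflation step, and the resolution tower is right.  However, your treatment of the dominant case contains a genuine gap in the coisotropic rows, and it is exactly the step the paper flags as the ``key step.''

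The paper's dominant-case formula is
\[
 [\Omega] = \prod_{j=1}^s (1-\beta T_j)^{-\lambda_j}\, c_{\lambda_j}\!\bigl( D_{\rho_j}^\perp/D_{j-1} - D_j/D_{j-1} - F_{q_j}/D_{j-1} \bigr),
\]
with $\lambda_j = n+q_j-\rho_j$ in the coisotropic range.  Your proposal instead uses the target $V/F_{q_j}$ and a Chern class of degree $n+q_j$.  This degree is too large: on the locus where $D_1\subseteq F_{q_1},\dots,D_{j-1}\subseteq F_{q_{j-1}}$, one already has $D_{\rho_j}\subseteq F_{q_{\rho_j}}\subseteq F_{q_j}^\perp$, hence $F_{q_j}\subseteq D_{\rho_j}^\perp$; and $D_j$ isotropic with $D_{\rho_j}\subseteq D_j$ forces $D_j\subseteq D_{\rho_j}^\perp$ as well.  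So the map $D_j/D_{j-1}\to V/F_{q_j}$ factors automatically through the smaller bundle $D_{\rho_j}^\perp/F_{q_j}$, whose rank is $\lambda_j=n+q_j-\rho_j$.  Using $V/F_{q_j}$ therefore gives a nonregular section and the wrong (too-high) expected codimension.  You need this ``automatic vanishing'' observation to get the correct $\lambda_j$, and it is not in your proposal.

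Once you use the correct target, the claim ``no $c(D_{i-1}-D_{i-1}^*)$-style correction appears, so no Pfaffian-type denominator $(1+R_{ij}-\beta T_i)^{-1}$ is produced when one iterates in this range'' is seen to be false and in fact contradicts your next sentence.  Expanding the correct Chern class gives
\[
  c\bigl(D_{\rho_j}^\perp/D_{j-1} - D_j/D_{j-1} - F_{q_j}/D_{j-1}\bigr)
   = c(V-D_j-F_{q_j})\cdot c(D_{j-1}/D_{\rho_j})\cdot c(D_{\rho_j}-D_{\rho_j}^*),
\]
so a $D-D^*$ correction \emph{does} appear in the coisotropic row $j$---it just runs only over $i\le\rho_j$.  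The $\prod_{\rho_j<i<j}(1-t_i)$ piece gives the determinantal factors and the $\prod_{i\le\rho_j}\tfrac{(1-t_i)(1-\beta t_i)}{1+t_i-\beta t_i}$ piece gives the Pfaffian-type factors with the $(1-\beta T_i)$ shift; these come entirely from row $j$, not from ``combining with the isotropic rows'' as your phrasing suggests.  With this correction your reassembly into $R^{(\rho)}$ goes through, and the main and general cases then follow as you outline.
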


The proof is a modification of the proof of Theorem~\ref{t.mainC} along the lines of the argument for \cite[Theorem~2]{af2}.  The key step is the dominant case, where one finds
\[
 [\Omega] = \prod_{j=1}^s (1-\beta T_j)^{-\lambda_j} c_{\lambda_j}( D_{\rho_j}^\perp/D_{j-1} - D_j/D_{j-1} - F_{q_j}/D_{j-1} ).
\]
Manipulating the Chern classes $c( D_{\rho_j}^\perp/D_{j-1} - D_j/D_{j-1} - F_{q_j}/D_{j-1} )$ as in the proof of Theorem~\ref{t.mainC} leads to the raising operator $R^{(\rho)}$.

\subsection{Type B}

Here $V$ has rank $2n+1$ and is equipped with a nondegenerate quadratic form.  The conditions on subbundles $E_{p_i}$ and $F_{q_i}$ are the same as in type C, as is the definition of a triple $\triple$, partition $\lambda=\lambda(\triple)$, and associated degeneracy locus $\Omega_\triple$.  The formula is similar to the type C formula, when $2$ is inverted.

\renewcommand{\thethm}{5B}
\begin{thm}\label{t.thetaB}
With notation as above, in $CK^*(X)[\frac{1}{2}]$ we have
\begin{align*}
  [\Omega_\triple] &= \left(\prod_{i=1}^{k_a}\frac{1-\beta T_i}{2-\beta T_i}\right)\cdot \Theta^{(\rho)}_{\lambda}(c(1),\ldots,c(k_s);\beta) ,
\end{align*}
where $c(k_i) = c(V-E_{p_i}-F_{q_i}-M)$ if $i\leq a$, $c(k_i)=c(V-E_{p_i}-F_{q_i})$ if $i>a$, and $c(k)=c(k_i)$ when $k_{i-1}<k\leq k_i$.
\end{thm}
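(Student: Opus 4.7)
The plan is to adapt the proof of Theorem~\ref{t.thetaC} by substituting the type B basic case from Lemma~\ref{l.basicB} for every isotropic condition, while handling coisotropic conditions exactly as in type A. The argument proceeds through the four standard steps: basic, dominant, main, and general. The basic case comes in two flavors. When $F_q$ is isotropic (so $q>0$), Lemma~\ref{l.basicB} gives the locus $\{D_1\subseteq F_q\}$ class as $\frac{1-\beta T}{2-\beta T}(1-\beta T)^{-\lambda_1}\cdot c_{\lambda_1}(V-D_1-F_q-M)$, with $\lambda_1=n-1+q$. When $F_q$ is coisotropic ($q<0$), the locus is cut out by the vanishing of $D_1 \to V/F_q$, a quotient of rank $n+q$; property (a) of Appendix~\ref{s.kthy} yields the class $(1-\beta T)^{-\lambda_1}\cdot c_{\lambda_1}(V-D_1-F_q)$ with no $M$ and no $\frac{1-\beta T}{2-\beta T}$ prefactor.

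In the dominant case ($k_i=i$ and $p_i=n+1-i$ for all $i$), I would iterate both basic formulas along the chain of conditions $D_i\subseteq F_{q_i}$, relative to the flag $D_i=E_{n+1-i}$, treating each step as in the ``dominant case'' of Theorem~\ref{t.mainC}. The first $a$ steps are isotropic and each contributes a factor $\frac{1-\beta T_i}{2-\beta T_i}$, which collect into the prefactor $\prod_{i=1}^{k_a}\frac{1-\beta T_i}{2-\beta T_i}$ displayed in the statement (since $k_a=a$ when $k_i=i$). The remaining Chern-class algebra---expanding $c(D_{\rho_j}^\perp/D_{j-1} - D_j/D_{j-1} - F_{q_j}/D_{j-1})$ via Whitney and the variables $t_j=-c_1(D_j/D_{j-1})$ exactly as in the proofs of Theorems~\ref{t.mainC} and \ref{t.thetaC}---produces the raising operator $R^{(\rho)}$. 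The bundle $M$ that appears in $c(k_i)$ for $i\leq a$ contributes nothing since $c_1(M)=0$ in $CK^*(X)[\frac{1}{2}]$, but it is kept for geometric clarity.

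The main case ($k_i=i$ for all $i$) follows from the dominant case by pushing forward along the Kempf-Laksov tower $X_i=\PP(E_{p_i}/D_{i-1})$, with the pushforward identity~\eqref{e.push} applied factor by factor to convert $\tilde\lambda_i$ to $\lambda_i$, exactly as in Theorems~\ref{t.mainC} and \ref{t.thetaC}. The general case comes from inflating $\triple$ to a triple $\triple'$ with $k_i'=i$, as in \cite[\S1.4]{af2}. The main obstacle will be verifying that this inflation is compatible with the prefactor: one needs the type B analog of Lemma~\ref{l.relation}, but this follows from the same geometric argument applied to the tower of projective bundles used there, with Lemma~\ref{l.basicB} in place of the type C basic case. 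The prefactor $\frac{1-\beta T_i}{2-\beta T_i}$ appears symmetrically in rows $k_\ell-1$ and $k_\ell$ when the affected indices lie in the isotropic range ($i\leq k_a$), so the two-row vanishing argument carries through unchanged, while inflation within the coisotropic range needs no interaction with the prefactor at all.
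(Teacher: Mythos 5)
Your proposal correctly implements the strategy the paper only sketches: since the paper gives no explicit proof of Theorem~\ref{t.thetaB} (it merely remarks that the proofs of Theorems~\ref{t.thetaC}--\ref{t.etaD} ``are straightforward combinations of the arguments from \cite{af2} and the methods used in proving Theorems~\ref{t.mainA}--\ref{t.mainD}''), your reconstruction---two flavors of basic case, dominant case producing $R^{(\rho)}$ with the $(1-\beta T_i)/(2-\beta T_i)$ factors collecting only from the isotropic steps $i\le k_a$, pushforward via the Kempf--Laksov tower, and inflation for the general case---is exactly the intended argument. One small point worth verifying against the conventions in \cite{af2}: in the coisotropic basic case you state that $V/F_q$ has rank $n+q$; since $V$ now has odd rank $2n+1$, this hinges on the type~B coisotropic rank convention (the consistent choice is $\rk F_q = n+1-q$ for $q<0$, matching the isotropic formula and making $\lambda_1 = n+q$ equal the codimension), which is different from the type~C value $\rk F_q = n-q$ even though the paper's phrase ``same as in type C'' might suggest otherwise.
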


(As noted in \S\ref{s.typeB}, the line bundle $M=\det(V)$ is included mainly to indicate its role in the geometry: its Chern class is $2$-torsion, so it vanishes in $CK^*(X)[\frac{1}{2}]$.)

\subsection{Type D}

As defined in \cite{af2}, a \define{triple} of type D is $\triple=(\bk,\bp,\bq)$, with the sequences
\begin{align*}
 0 < &k_1 < \cdots < k_s, \\
  & p_1 \geq \cdots \geq p_s \geq 0, \; \text{ and }\\
  & q_1 \geq \cdots \geq q_s
\end{align*}
satisfying conditions similar to the type C case.  The requirements can be quickly described as follows: the value $-1$ is prohibited for the $q$'s, and if one forms a new triple $\triple^+$ by replacing each $p_i$ by $p_i+1$ and each nonnegative $q_i$ by $q_i+1$, then $\triple^+$ is a type C triple.

As before, there is an index $a$ such that $q_a>-1>q_{a+1}$.  The sequence $\rho=\rho(\triple)$ is defined similarly: For $i>a$, $\rho_{k_i} = k_j$, where $j$ is the index such that $q_j\geq -q_i > q_{j+1}+1$.  For $k\leq k_a$, $\rho_k=k-1$, and the rest of the entries are filled out by setting $\rho_k=\rho_{k_i}$ for $k_{i-1}<k\leq k_i$.

The partition $\lambda=\lambda(\triple)$ is defined by
\[
  \lambda_{k_i} = \begin{cases} p_i+q_i & \text{ if }i\leq a; \\ p_i+q_i+k_i-\rho_{k_i} & \text{ if } i>a, \end{cases}
\]
and then filling in other parts minimally so that $\lambda_1>\cdots>\lambda_{k_a}\geq \lambda_{k_a+1} \geq \cdots \geq \lambda_{k_s} \geq 0$.

The formulas will be in terms of \define{$\beta$-eta polynomials}.  Given $\rho$, with a distinguished index $k$ so that $\rho_j=j-1$ for all $j\leq k$, we define the raising operator
\begin{align*}
  \tilde{R}^{(\rho)}  &= \prod_{j=1}^{\ell} (1-\beta\tilde T_j)^{\rho'_j} \cdot \prod_{j=k+1}^{\ell} \frac{\prod_{i=1}^{k-1} 1-\delta_i\delta_j R_{ij}}{\prod_{i=1}^{\rho_j} 1+\delta_i\delta_j (R_{ij}-\beta T_i)} \\
 & \qquad  \times \prod_{1\leq i<j\leq k} \frac{ 1-\delta_i \delta_j R_{ij} }{1+\delta_i \delta_j(R_{ij}-\beta T_i)} \cdot \prod_{k\leq i<j\leq \ell} (1-\delta_i\delta_j R_{ij}).
\end{align*}

Given $\rho$ with distinguished index $k$, and a $\rho$-strict partition $\lambda$, the $\beta$-eta polynomial is
\begin{align*}
  \Eta_\lambda^{(\rho)}(d(1),\ldots,d(\ell);\beta) := \left(\prod_{i=1}^{k} \frac{1-\beta\tilde T_i}{2-\beta\tilde T_i}\right) \cdot \tilde{R}^{(\rho)}\cdot \prod_{i=1}^{\ell} (1-\beta\tilde T_i)^{-\lambda_i}\,d_{\lambda_i}(i),
\end{align*}
where the variables $d(i)=c(i)\pm e(i)$ are as in \S\ref{s.typeD}, and for $i>k$, we set $e(i)=0$.

The vector bundle $V$ has rank $2n$ and a nondegenerate quadratic form.  The isotropic bundles $E_{p_i}$ and $F_{q_i}$ (with $p_i,q_i\geq 0$) are as in \S\ref{s.typeD}, and for $q_i<0$, the bundle $F_{q_i}$ is coisotropic of rank $n-q_i$.  The degeneracy locus is
\[
  \Omega_\triple = \{ x\in X\,|\, \dim(E_{p_i}\cap F_{q_i}) \geq k_i \text{ for } 1\leq i\leq s \},
\]
as usual, with the same caveat about interpreting this as in \S\ref{s.typeD}.

\renewcommand{\thethm}{5D}
\begin{thm}\label{t.etaD}
Given a type D triple $\triple$, with $\rho=\rho(\triple)$ and $\lambda=\lambda(\triple)$, we have
\[
  [\Omega_\triple] = \Eta^{(\rho)}_\lambda( d(1),\ldots,d(k_s);\beta)
\]

\medskip
\noindent
in $CK^*(X)[\frac{1}{2}]$.  Here $d(k)=c(k)+(-1)^k e(k)$ for $k\leq k_a$ and $d(k)=c(k)$ for $k>k_a$, where the Chern classes are $c(k_i)=c(V-E_{p_i}-F_{q_i})$ and $e(k_i)=e(E_{p_i},F_{q_i})$, setting $c(k)=c(k_i)$ and $e(k)=e(k_i)$ for $k_{i-1}<k\leq k_i$.
\end{thm}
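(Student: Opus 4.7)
The plan is to follow the four-step structure (basic, dominant, main, general) used in the proof of Theorem~\ref{t.mainD}, incorporating the $\rho$-dependent modifications that appeared in the proof of Theorem~\ref{t.thetaC}. The key observation is that the Euler-class-carrying geometry lives only in the isotropic range $i\leq a$, while for $i>a$ the bundle $F_{q_i}$ is coisotropic and contributes a purely Chern-theoretic factor; since $e(i)=0$ for $i>k_a$, the raising operator $\tilde R^{(\rho)}$ degenerates to the $\beta$-theta operator on those indices, so the two regimes piece together naturally via the index $\rho$.

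First I would dispense with the new basic case: the locus where a line subbundle $D_1$ sits inside a coisotropic bundle $F_q$ (with $q<0$, so $\rk F_q = n-q$). No square root is needed, and Property~(a) of Appendix~\ref{s.kthy} gives directly
\[
 [\Omega] = c_{\rk(V/F_q)}(V/F_q\otimes D_1^*) = (1-\beta T)^{-\lambda} c_\lambda(V-D_1-F_q),
\]
with $\lambda=n+q$. Together with Lemma~\ref{l.basicD} (the isotropic basic case), this covers all building blocks. For the dominant case where $k_i=i$ and $p_i=n-i$, I would iterate the two basic cases to obtain $[\Omega_\triple]=\prod_{i=1}^{k_s}\zeta_i$: the isotropic factors ($i\leq a$) carry the Euler correction $\tfrac{1}{2}(-1)^i e(i)$ from Lemma~\ref{l.basicD}, while the coisotropic factors ($i>a$) are pure Chern classes of $D_{\rho_i}^\perp/D_{i-1}-D_i/D_{i-1}-F_{q_i}/D_{i-1}$, where the shift from $D_{i-1}^\perp$ to $D_{\rho_i}^\perp$ encodes the fact that $F_{q_i}$ already contains the perpendicular to the first $\rho_i$ steps of the flag. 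Expanding by Whitney, using $c(D_{i-1}-D_{i-1}^*)=\prod_{j<i}(1-t_j)(1-\beta t_j)/(1+t_j-\beta t_j)$ and $\gamma(E/D_{i-1},F/D_{i-1})=\gamma(E,F)\prod_{j<i}(1-\beta t_j)$, reassembles the product into $\tilde R^{(\rho)}$ applied to $\prod_i(1-\beta\tilde T_i)^{-\lambda_i}d_{\lambda_i}(i)$.

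The main case (where $k_i=i$ but $p_i$ is arbitrary) would then follow from the dominant case by the Kempf-Laksov-Kazarian resolution $\pi\colon X_s\to X$ via the tower of projective bundles $X_i=\PP(E_{p_i}/D_{i-1})$ used throughout the paper; the preimage $\tilde\Omega\subseteq X_s$ is an instance of the dominant case, and the pushforward via the projection formula together with Property~(c) lowers each exponent $(1-\beta\tilde T_i)^{-\tilde\lambda_i}$ to $(1-\beta\tilde T_i)^{-\lambda_i}$, exactly as in Theorem~\ref{t.mainD}. Finally, the general case would be reduced to the main case by inflating $\triple$ to $\triple'$ with $k'_i=i$: redundant isotropic insertions leave the formula invariant by Lemma~\ref{l.drel}, while redundant coisotropic insertions do so by the row-operation identity used at the end of the proof of Theorem~\ref{t.mainA}; insertions at the boundary $i=a$ can be split into one of each type.

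The main obstacle will be the algebraic bookkeeping at the end of the dominant case: one must verify that the product $\prod_i\zeta_i$---with Euler corrections attached only to the first $k_a$ factors, with coisotropic factors indexed by the shifted flag $D_{\rho_i}^\perp$, and with the $\delta_i$ operators switched off for $i>k_a$---collapses under Whitney expansion to exactly $\tilde R^{(\rho)}\cdot\prod_i(1-\beta\tilde T_i)^{-\lambda_i}d_{\lambda_i}(i)$. Each ingredient has appeared in isolation (the $\rho$-bookkeeping in the proof of Theorem~\ref{t.thetaC}, the $\delta$-bookkeeping in the proof of Theorem~\ref{t.mainD}), so the real work is to check that they do not interfere: that the denominators $1+\delta_i\delta_j(R_{ij}-\beta T_i)$ behave correctly whenever $j>k_a$ (so that $\delta_j$ acts as the identity and the relevant factor matches the coisotropic side of $\tilde R^{(\rho)}$), and that the Euler products $e(i)e(j)$ appear only for $i,j\leq k_a$, matching the truncation in the definition of $d(i)$.
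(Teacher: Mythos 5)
Your plan matches the paper's own (very brief) indication of the argument: Theorem~\ref{t.etaD} is to be proved by superposing the $\delta$-bookkeeping of the Theorem~\ref{t.mainD} proof onto the $\rho$-bookkeeping sketched for Theorem~\ref{t.thetaC}, running through the same basic/dominant/main/general stages with the Euler corrections confined to the isotropic indices $i\leq k_a$ and the coisotropic basic case handled by Property~(a) alone. Since the paper leaves these details to the reader, your outline is essentially as complete as the paper's, and you have correctly identified the dominant-case Whitney/raising-operator reassembly into $\tilde R^{(\rho)}\cdot\prod_i(1-\beta\tilde T_i)^{-\lambda_i}d_{\lambda_i}(i)$ as the main computation to be checked.
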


\newpage

%%%%%%%%%%%%%%%%%%%%%%%%%%%%%%%%%%%%%%%%
\appendix
%%%%%%%%%%%%%%%%%%%%%%%%%%%%%%%%%%%%%%%%
\section{Connective $K$-theory}\label{s.kthy}
%%%%%%%%%%%%%%%%%%%%%%%%%%%%%%%%%%%%%%%%

Connective $K$-theory for schemes was introduced by Cai \cite{cai}, and an alternative version adapted to motivic homotopy theory was studied by Dai and Levine \cite{dl}.  These authors construct functors which extend Quillen's higher algebraic $K$-theory.  A simpler development is possible if one is concerned only with Grothendieck and Chow groups, which suffices for many applications to degeneracy loci.  The direct approach has the added benefit of requiring neither quasi-projective hypotheses (imposed in \cite{cai}) nor an appeal to resolution of singularities and moving lemmas (used in \cite{dl}).  

This appendix begins with a summary of the basic properties, especially those needed for degeneracy loci.  Precise definitions and an outline of the construction follow.  This material seems hard to find in the literature, so I have included a fair amount of detail.

\subsection{Basic properties}

All schemes are assumed to be separated and of finite type over a field.  Flat morphisms are assumed to have constant relative dimension, as in \cite{f-it}.  We will follow the sign conventions of \cite{himn} (which differ from those of \cite{cai,dl}).

\medskip
\noindent
{\it Homology.}  
For a scheme $X$, $CK_*X$ is a graded module over $\ZZ[\beta]$, with $\beta$ in degree $1$, and with $CK_i X=0$ for $i<0$.  This is covariant for proper maps: if $f\colon X \to Y$ is proper, then there are homomorphisms $f_*\colon CK_iX \to CK_iY$, compatible with the $\ZZ[\beta]$-module structure.

For $X=\pt$, there is a natural identification $CK_*(\pt) = \ZZ[\beta]$, the standard module.

In general, there is a natural identification $CK_{\dim X} X = K_\circ X$, and multiplication by $\beta$ determines isomorphisms $CK_i X \isom CK_{i+1} X \isom K_\circ X$ for all $i\geq \dim X$.

There is a {\it fundamental class} $[X]\in CK_{\dim X} X$, defined to be the class corresponding to $[\OO_X]\in K_\circ X$ under the identification $CK_{\dim X}X = K_\circ X$.  More generally, if $V\subseteq X$ is a closed subscheme of dimension $d$, then there is a class $[V]\in CK_d X$ obtained by pushforward from $CK_d V$.  The classes $[V]$ of subvarieties generate $CK_*X$ as a $\ZZ[\beta]$-module.  If $f\colon X' \to X$ is a proper map, restricting to a birational map of subvarieties $V' \to V$, and if both $V'$ and $V$ have rational singularities, then $f_*[V']=[V]$ in $CK_d X$.

The functor $CK_*$ interpolates between Chow groups and Grothendieck groups, in that there are functorial isomorphisms
\[
  (CK_*X)/(\beta) \xrightarrow{\sim} A_*X \qquad \text{ and } \qquad (CK_*X)[\beta^{-1}] \xrightarrow{\sim} (K_\circ X)[\beta,\beta^{-1}].
\]
That is, setting $\beta=0$ recovers Chow, and setting $\beta=-1$ recovers $K$-theory.

\medskip
\noindent
{\it Cohomology.}  
The connective $K$-cohomology $CK^*X$ is a graded algebra over $\ZZ[\beta]$, with $\beta$ in degree $-1$, and with $CK^iX=0$ for $i>\dim X$.  This is contravariant for arbitrary maps.  It acts on homology, making $CK_*X$ a graded $CK^*$-module, via the cap product
\[
  CK^iX \otimes CK_j X \to CK_{j-i} X.
\]
The cap product is functorial via the projection formula: if $f\colon X \to Y$ is proper, then $f_*(f^*c \cap \alpha) = c\cap f_*\alpha$, for $c\in CK^*Y$ and $\alpha\in CK_*X$.

When $X$ is smooth, there are Poincar\'e isomorphisms
\[
  CK^i X \xrightarrow{\sim} CK_{\dim X-i} X
\]
given by $c\mapsto c\cap [X]$.

The functor $CK^*$ interpolates between (operational) Chow and $K$-cohomology: there are functorial isomorphisms
\[
  (CK^*X)/(\beta) \xrightarrow{\sim} A^*X \qquad \text{ and } \qquad (CK^*X)[\beta^{-1}] \xrightarrow{\sim} (\opK^\circ X)[\beta,\beta^{-1}],
\]
where $A^*X$ is the Fulton-MacPherson Chow ring, and $\opK^\circ X$ is the operational $K$-theory defined in \cite{ap}.  (When $X$ is smooth, one has $\opK^\circ X\isom K^\circ X$, and in this case, our version of $CK^*X$ agrees with the one defined in \cite{dl}.)

\medskip
\noindent
{\it Chern classes.}  
Connective $K$-theory is equipped with a theory of Chern classes.  The only properties we need are the following.

\begin{itemize}
\item For a vector bundle $E$ of rank $e$ on $X$, there are Chern classes $c_i(E) \in CK^i(X)$, with $c_0(E)=1$, and with $c_i(E)=0$ unless $0\leq i\leq e$.  

\medskip
\item Chern classes satisfy the Whitney formula: given a short exact sequence $0 \to E' \to E \to E'' \to 0$, one has $c(E) = c(E')\cdot c(E'')$.  Chern classes of virtual bundles are defined by extending the Whitney formula: $c(E-F)$ is defined as $c(E)/c(F)$.

\medskip
\item First Chern classes of line bundles satisfy
\[
  c_1(L\otimes M) = c_1(L) + c_1(M) + \beta\,c_1(L)\,c_1(M).
\]
Together with the fact that the first Chern class of the trivial line bundle is zero, means that
\[
  c_1(L^*) = \frac{-c_1(L)}{1+\beta\,c_1(L)}.
\]

\medskip
\item If $Z$ is the zero locus of a regular section of $E$, then $[Z]=c_e(E)\cap [X]$ in $CK_{\dim X-e}X$.
\end{itemize}

For a vector bundle $E$ of rank $e$, and a line bundle $L$, three main properties are particularly useful for degeneracy locus formulas:
\begin{enumerate}[(a)]
\item We have
\begin{align*}
  c_e(E\otimes L^*) &= \sum_{k\geq 0} \binom{-e}{k}(-\beta)^k c_{e+k}(E-L) \\
               &= (1-\beta T)^{-e} c_e(E-L),
\end{align*}
where $T$ is the operator which adds $1$ to the index of the Chern class---i.e., $T\cdot c_i(E-L) = c_{i+1}(E-L)$.  (The generalized binomial coefficient is defined by $(1+x)^m = \sum_{k\geq 0} \binom{m}{k} x^k$ for any $m\in \ZZ$.  The identity is easy to prove by using the splitting principle for $E$ together with the formula relating the first Chern class of a line bundle with that of its dual.  Alternatively, see \cite[Lemma~2.1]{himn}.)

\medskip
\item For any $a\geq 0$ and $b\geq e$,
\[
  (-c_1(L))^a\cdot c_b(E-L) = c_{a+b}(E-L).
\]
(This is a formal consequence of the Whitney sum formula.)

\medskip
\item For the projective bundle $\PP(E) \xrightarrow{\pi} X$, with $Q=\pi^*E/\OO(-1)$ the tautological quotient of rank $e-1$, we have
\[
  \pi_*\big( c(Q)\cap [\PP(E)] \big) = (1-\beta)^{e-1}[X],
\]
and in particular $\pi_*[\PP(E)] = (-\beta)^{e-1}[X]$.  (To prove this, it suffices to consider the specializations at $\beta=0$ and $\beta=-1$, i.e., Chow and $K$-theory.  The Chow case is classical.  For $K$-theory, one can express $c_i(Q)$ in terms of exterior powers of $Q^*$ as in Karoubi's text\footnote{Karoubi's convention for $c_i(V)$ corresponds to ours for $c_i(V^*)$.} \cite[IV, 2.18]{karoubi} and use the fact that $\pi_*(\exterior^kQ^*)=0$ for all $k$.) 
\end{enumerate}

When $X$ is smooth, one often tacitly invokes the Poincar\'e isomorphism identifying $CK^iX = CK_{\dim X-i} X$.  This allows one to speak of the fundamental class $[V]\in CK^d X$ for a subvariety $V$ of codimension $d$.

%%%
\subsection{Definitions and constructions}
%%%

Let $\cM_i(X)$ be the category of coherent sheaves on $X$ whose support has dimension at most $i$, so there are natural inclusions $\cM_i(X)\hookrightarrow \cM_{i+1}(X)$.  Following Cai, we define
\[
  CK_i(X) := \im\big( K_\circ(\cM_i(X)) \to K_\circ(\cM_{i+1}(X)) \big).
\]
Two observations are immediate from the definition.  First, $CK_i(X)=0$ if $i<0$.  Second, there are natural homomorphisms $CK_i(X) \to CK_{i+1}(X)$, which are isomorphisms whenever $i\geq \dim X$, and $CK_i(X)\isom K_\circ(X)$ in this case.  A third fact comes easily from d\'evissage: $CK_i(X)$ is generated by classes $[\OO_V]$, where $V\subseteq X$ is a subvariety of dimension at most $i$.

We will define the $\ZZ[\beta]$-algebra structure on $CK_*$ by letting $-\beta$ act as the homomorphism $CK_i \to CK_{i+1}$.  (Soon this will be given a geometric interpretation.  The sign of $\beta$ agrees with \cite{himn}, and is chosen to eliminate signs in other formulas.)  
The evident isomorphism of graded groups $CK_*(\pt)\isom\ZZ[\beta]$ is therefore compatible with the standard structure of $\ZZ[\beta]$ as a $\ZZ[\beta]$-module.

The key fact about $CK_*$ is that it is a {\it refined oriented Borel-Moore} functor, or {ROBM}, in the terminology of \cite{gk}.  This means it has certain pushforwards and pullbacks which satisfy natural compatibility axioms, and it guarantees that the associated operational cohomology theory has good properties.

\medskip
\noindent
{\it Pushforwards for proper maps.}  
If $f\colon X \to Y$ is proper, then $f_*\colon CK_iX \to CK_iY$ is defined by
\[
  f_*[\shfF] := \sum_{p\geq 0} (-1)^p [R^pf_*\shfF],
\]
for $\shfF$ in $\cM_iX$.  (Note that the support of $R^pf_*\shfF$ is contained in the image of the support of $\shfF$.)  These are functorial for the same reason as for the usual pushforwards in $K_\circ$.  Furthermore, $f_*$ is a homomorphism of graded $\ZZ[\beta]$-modules: multiplication by $-\beta$ simply means regarding $\shfF$ as determining a class in $K_\circ(\cM_{i+1}X)$, and similarly $R^pf_*\shfF$ determines a class in $K_\circ(\cM_{i+1}Y)$.

The isomorphism $CK_*(\pt)=\ZZ[\beta]$ identifies $-\beta$ with the class in $CK_1(\pt)$ represented by $\OO_{\pt}$, which is equal to $(-\beta)\cdot [\pt] = -\beta$.  This provides a geometric characterization of $\beta$: writing $\pi\colon \PP^1\to \pt$, we have $-\beta = \pi_*[\PP^1]$ (and more generally, $(-\beta)^n=\pi_*[\PP^n]$).

\medskip
\noindent
{\it Refined Gysin pullbacks.}  
The most delicate part of the theory is the construction of pullbacks.  One situation is easy: if $f\colon X \to Y$ is flat of relative dimension $d$, then there is a pullback $f^*\colon CK_iY \to CK_{i+d}X$, defined simply by $f^*[\shfF]=[f^*\shfF]$.

Before turning to the more general lci pullbacks, a few particular instances of flat pullback are worth noting.  First, fundamental classes of pure-dimensional schemes can be characterized by flat pullback: if $\pi\colon X \to \pt$ is the projection, then $\pi^*[\pt]=[X]$ in $CK_{\dim X}X$.  Next, the localization sequence for Grothendieck groups implies a corresponding sequence in $CK_*$: if $Z\subseteq X$ is closed, with open complement $U=X\setminus Z$, then the proper pushforward and flat pullback fit together in exact sequences
\[
  CK_i Z \to CK_i X \to CK_i U \to 0.
\]
Finally, there is a homotopy-invariance property: if $\pi\colon E \to X$ is a vector bundle of rank $e$, then $\pi^*\colon CK_iX \to CK_{i+e}E$ is an isomorphism for all $i$.

Now suppose we have a fiber square
\[
\begin{tikzcd}
  X' \ar{r}\ar{d} & Y' \ar{d}{g} \\
  X  \ar{r}{f} & Y
\end{tikzcd}
\]
with $f$ lci of relative codimension $d$.  Then there is a Gysin homomorphism $f^!\colon CK_i(Y') \to CK_{i-d}(X')$.

Since smooth pullback has already been defined, it will suffice to consider the case when $f$ is a regular embedding.  One may hope to define $f^![\shfF] = \sum_{p\geq 0}(-1)^p [\tor^Y_p(\OO_{X},\shfF)]$, just as the Gysin pullback is defined for $K_\circ$ (see \cite{ap})---but in general these Tor sheaves may have support of dimension larger than $i-d$.  To work around this, we use the Fulton-MacPherson deformation to the normal cone.  
We first need to define the pullback for divisors in two very special cases.  Let $\iota \colon D\hookrightarrow Y$ be the inclusion of a Cartier divisor, defined by a section of a line bundle $L$ on $Y$.

The first case is where the normal bundle $L|_D$ is trivial.  Let $V\subseteq Y$ be a subvariety with $\dim V \leq i$; it suffices to define $\iota^*\colon CK_i Y \to CK_{i-1}D$ for the classes $[\OO_V]$ determined by such $V$.  If $V\not\subseteq D$, define $\iota^*[\OO_V] = [\OO_{V\cap D}]=[\OO_V\otimes \OO_D]$.  (This sheaf has support of dimension at most $i-1$, and since $\tor^Y_1(\OO_V,\OO_D)=0$ for $V\not\subseteq D$, the definition agrees with the naive one.)  If $V\subseteq D$, then $\iota^*[\OO_V]=0$.  (This also agrees with the naive definition, since one can write $\OO_D=1-[L^*]$ in $K_\circ Y$, and then $\iota^*[\OO_V]=(1-[L^*])\cdot[\OO_V]=[\OO_V]-[L^*|_V]=0$.)  It follows that the composition $\iota^*\iota_*\colon CK_i D \to CK_{i-1} D$ is zero in this case.

The second case is where $Y$ is a line bundle with projection $\pi\colon Y \to D$, and $D$ is embedded by the zero section.  Here $\iota^*$ is defined to be $(\pi^*)^{-1}\colon CK_i Y \to CK_{i-1} D$, using the homotopy isomorphism.  

Returning to the general situation where $f\colon X \hookrightarrow Y$ is a regular embedding of codimension $d$, we will use the notation of \cite[\S\S5--6]{f-it}.  Let $C'=C_{X'/Y'}$ be the normal cone, and $M^\circ=M^\circ_{X'/Y'}$ the deformation space.  As remarked in \cite[\S5]{f-it}, the normal bundle to the divisor $C'$ in $M^\circ$ is trivial, so we have a pullback for this inclusion.  Deformation to the normal cone produces homomorphisms
\[
\begin{tikzcd}
 CK_{i+1} C' \ar{r}{\iota_*} & CK_{i+1} M^\circ  \ar{d}{\iota^*} \ar{r} & CK_{i+1}(Y' \times\AA^1) \\
  & CK_i C' & CK_i Y' \ar{u}{p^*} \ar[dashed]{l}{\sigma},
\end{tikzcd}
\]
where the top row is the localization exact sequence.  The flat pullback $p^*$ lifts to a well-defined specialization homomorphism $\sigma$ for the  reasons given in \cite[\S5]{f-it}; see \cite[\S18.2]{f-it}.

Finally, let $N=g^*N_{X/Y}$, so this is a vector bundle of rank $d$ on $X'$, and $j\colon C' \hookrightarrow N$ embeds as a closed subscheme.  The Gysin homomorphism $f^!$ is defined as the composition
\[
  CK_i Y' \xrightarrow{\sigma} CK_i C' \xrightarrow{j_*} CK_i N \xrightarrow{s^*} CK_{i-d}X',
\]
where $s$ is the zero section and $s^*=(\pi^*)^{-1}$ is the pullback defined via homotopy invariance.

\medskip
\noindent
{\it Specializing to $A_*X$ and $K_\circ X$.}  
As explained in \cite[Theorem~7.1]{cai}, the Brown-Gersten-Quillen spectral sequence induces functorial isomorphisms
\[
  (CK_iX)/(\beta CK_{i-1}X) \to A_iX .
\]
(See also \cite[\S V.9]{weibel}, where $X$ is only required to be a noetherian scheme.)  This isomorphism is compatible with refined Gysin homomorphisms.  Indeed, the above construction for $CK_*$ specializes to the one for $A_*$ given in \cite{f-it}.

On the other hand, inverting $\beta$ produces isomorphisms
\[
  (CK_*X)[\beta^{-1}] \to (K_\circ X)[\beta,\beta^{-1}],
\]
as also shown in \cite[Theorem~7.1]{cai}.  This is compatible with the refined Gysin homomorphisms for $K_\circ$ defined via alternating Tor sheaves, as in \cite{ap}.  In fact, the Gysin homomorphism $f^!\colon CK_i Y' \to CK_{i-d} X'$ can be computed as
\[
  f^![\shfF] = \sum_{p\geq 0} (-1)^p [\tor^Y_p(\OO_X,\shfF)]
\]
whenever $i-d\geq \dim X'$.  Indeed, the right-hand side defines a class in $CK_{i-d}X'$ in this case.  It agrees with the general construction because the Koszul resolution of $\OO_X$ as an $\OO_Y$-module, pulled back to $Y'$ and deformed to $N$, also computes the homomorphism $CK_i N \to CK_{i-d} X'$.

Since the refined Gysin homomorphisms for $CK_*$ agree with those for $A_*$ and $K_\circ$, they automatically satisfy various functoriality and commutativity properties proved in \cite[\S6]{f-it} and \cite[\S3]{ap}.

As a $\ZZ[\beta]$-module, $CK_*X$ is flat if and only if the natural surjection $A_*X\to Gr_*K_\circ X$ is an isomorphism.  (Here $Gr_*K_\circ X$ is the associated graded for the dimension filtration; see \cite[Ex.~15.1.5]{f-it}.)

\medskip
\noindent
{\it Operational cohomology.}  
To define $CK^*X$, we apply the bivariant construction introduced by Fulton and MacPherson to the homology theory $CK_*$.  Given a morphism $f\colon X \to Y$, a bivariant class $c\in CK^i(X\xrightarrow{f} Y)$ is a collection of homomorphisms $c_g\colon CK_j Y' \to CK_{j-i}X'$, one for each morphism $g\colon Y' \to Y$, with $X'=X\times_Y Y'$ the fiber product.  These are required to satisfy various compatibility axioms; a good general reference for the construction in this context is \cite{gk}.  The cohomology theory is defined to be $CK^*X = CK^*(X \xrightarrow{\id} X)$.  This is contravariant in $X$ (for arbitrary morphisms of schemes).

The properties of the refined Gysin pullbacks for $CK_*$ amount to the statement that this forms a ROBM homology theory.  As shown in \cite{gk}, it follows that there are Poincar\'e isomorphisms $CK^i X \xrightarrow{\sim} CK_{\dim X-i} X$ when $X$ is smooth.  In particular, $CK^*(\pt)$ is isomorphic to $\ZZ[\beta]$, but with the opposite grading, so $\beta$ has degree $-1$.

There are {\it Kimura exact sequences} for $CK^*$, as in \cite{ap}.  If $p\colon X' \to X$ is an {\it envelope}, meaning that every subvariety of $X$ is the birational image of a subvariety of $X'$, then the sequence
\[
  0 \to CK^*X \xrightarrow{p^*} CK^*(X') \xrightarrow{p_1^*-p_2^*} CK^*(X'\times_X X')
\]
is exact, where $p_1$ and $p_2$ are the two projections.  Similarly, if the fiber square
\[
\begin{tikzcd}
  E \ar{d} \ar[hook]{r} & X' \ar{d} \\
  S \ar[hook]{r} & X
\end{tikzcd}
\]
is an {\it abstract blowup square}, meaning that the induced map $X'\setminus E \to X\setminus S$ is an isomorphism, then the sequence
\[
  0 \to CK^*X \to CK^*S \oplus CK^*X' \to CK^*E
\]
is exact.

In a category that admits resolution of singularities, the Kimura sequences imply that $CK^*$ is the (right) Kan extension of the same functor defined on smooth schemes.  Concretely, this means that if $L^*$ is any cohomology theory which agrees with $CK^*$ on smooth schemes, then there are natural homomorphisms $L^*X \to CK^*X$.  In particular, there are natural homomorphisms
\[
  CK^*_{\mathrm{DL}}X \to CK^*X,
\]
where $CK^*_{\mathrm{DL}}$ is the connective $K$-cohomology constructed by Dai and Levine, and these are isomorphisms when $X$ is smooth.

\medskip
\noindent
{\it Chern classes.}  
If $L$ is a line bundle on $X$, with zero section $s\colon X \to L$, the homomorphism $s^*s_*\colon CK_i X \to CK_{i-1} X$ defines the first Chern class operator $c_1(L)\in CK^1X$.  Via the usual Grothendieck construction, this extends canonically to a theory of Chern classes for vector bundles.  The argument given in \cite[Theorem~6.4]{cai} shows that the group law for $CK^*$ is as claimed above: $c_1(L\otimes M) = c_1(L)+c_1(M)+\beta\, c_1(L)\, c_1(M)$.

More generally, for any rank $e$ vector bundle $E$ and any section $s$, one has
\[
c_e(E) = s^*s_*\colon CK_i X \to CK_{i-e} X.
\]
If the section is regular, with zero locus $Z$, then $[Z]=c_e(E)\cap [X]$.  (These facts are proved by the same arguments as for Chow groups, given in \S6 and \S14 of \cite{f-it}.)

%\newpage

%%%%%%%%%%%%%%%%%%%%%%%%%%%%%%%%%%%%%%%%
\section{Euler classes and quadric bundles}\label{s.euler}
%%%%%%%%%%%%%%%%%%%%%%%%%%%%%%%%%%%%%%%%

\setcounter{equation}{0}
\renewcommand{\theequation}{B.\arabic{equation}}
\setcounter{thm}{0}
\renewcommand{\thethm}{\arabic{thm}}

In this appendix we develop a $K$-theoretic extension of some results of Edidin and Graham about characteristic classes for quadratic forms \cite{eg}.  Specifically, we construct an {\em Euler class} for an even rank vector bundle $V$ with a nondegenerate quadratic form $q$, in the case $(V,q)$ is locally trivial in the Zariski topology.  

The key insight is that while the Chow Euler classes of \cite{eg} are defined up to sign, Euler classes in $CK^*$ are defined up to {\em unit} in $CK^0$.  Since classes in $CK^{>0}$ are nilpotent, the units in $CK^0$ are $\pm 1 + \beta CK^1$.  (The units in $A^0$ are just $\pm 1$, so this is a direct generalization from \cite{eg}.)  
On the other hand, we will see that it is necessary to invert $2$ to construct Euler classes in general---even when the pair $(V,q)$ is locally trivial in the Zariski topology.

Throughout this appendix, let $V$ be a vector bundle of rank $2n$ over a variety $X$, equipped with a nondegenerate quadratic form $q$, so that the pair $(V,q)$ is locally trivial in the Zariski topology.

\begin{def*}
An \define{Euler class} for $(V,q)$ is a class $\eu
$ in $CK^n(X)$ such that for any $f\colon X' \to X$ and any maximal isotropic subbundle $E\subseteq f^*V$, we have $f^*\eu= a\cdot c_n(E)$, for some unit $a\in CK^0(X')$.
\end{def*}

To construct the Euler class, we need a formula comparing top Chern classes of maximal isotropic subbundles.  This extends a conjecture of Fulton for Chow classes, which was proved in \cite{eg}.  Let us write $c(u)=c(V-E-F;u)$ for the total Chern class in $CK^*(X)$, where $E$ and $F$ are maximal isotropic.  We will use various specializations of $c(u)$ throughout this appendix.

\begin{thm}\label{t.sqrt}
Let $E$ and $F$ be two maximal isotropic subbundles of $V$.

\begin{enumerate}
\item The class $c(\beta)$ has a unique functorial square root $\gamma(E,F)$ in $CK^0(X)$ which evaluates to $1$ at $\beta=0$.  (Functoriality means that $f^*\gamma(E,F) = \gamma(f^*E,f^*F)$ for any $f\colon X' \to X$.)

\medskip

\item If $D$ is a vector bundle contained in $E\cap F$, then $\gamma(E/D,F/D)=\gamma(E,F)\cdot c(D;\beta)$ is the canonical square root of $c(D^\perp/D - E/D-F/D;\beta)$.

\medskip

\item We have $c_n(F^*) = (-1)^{\dim(E\cap F)}\gamma(E,F)\,c_n(E)$.
\end{enumerate}
\end{thm}

\begin{proof}
The class $\gamma(E,F)$ will be constructed below, using Theorem~\ref{t.push}.  We will prove the other properties, assuming $\gamma(E,F)$ exists.

First, functoriality guarantees that $\gamma(E,F)$ is unique if it exists.  Suppose $2$ is not a zerodivisor in $CK^*(X)$.  If $x,y\in CK^0(X)$ are such that $x^2=y^2$ and $x\equiv y\equiv 1$ modulo $\beta$, then $x+y\equiv 2$ modulo $\beta$, so $x+y$ is not a zerodivisor, and it follows that $x-y=0$.  In general, one can find a scheme $X'$ with morphisms $f\colon X' \to X$ and $g\colon X' \to Y$, so that $f^*$ is injective on $CK^*$, $2$ is not a zerodivisor in $CK^*(Y)$, and $f^*E=g^*E'$ and $f^*F=g^*F'$ for some bundles $E'$ and $F'$ on $Y$.  (See \cite[Lemma~1.6]{totaro}, or the final paragraph of \cite{graham}.)

To see (ii), use $D^\perp\isom (V/D)^*$ and $V\isom V^*$ to write $c(D^\perp/D - E/D - F/D;\beta) = c(V-E-F;\beta)\cdot c(2D;\beta) = c(V-E-F;\beta)\cdot c(D;\beta)^2$.

To prove (iii), one can assume that $F=L_1\oplus \cdots \oplus L_n$ splits as a direct sum of line bundles.  (Indeed, there is a morphism $f\colon X' \to X$ so that $f^*$ is injective on $CK^*$ and so that $f^*F$ splits; now replace $X$ by $X'$.)  Furthermore, the maximal isotropic subbundle $E\subseteq V$ determines a section of the Grassmann bundle $OG(n,V)$ parametrizing isotropic subbundles $E'\subseteq V$ such that $\dim(E'\cap F)\equiv\dim(E\cap F) \pmod 2$.  Replacing $X$ by $OG(n,V)$, we may assume $E$ is the tautological subbundle on $OG(n,V)$.

The claim now follows by an equivariant localization argument.  With $F$ splitting as above, let $z_i=c_1(L_i)$.  For each $I\subseteq \{1,\ldots,n\}$ of cardinality congruent to $\dim(E\cap F)$ mod $2$, there is a point $p_I$ of $OG(n,V)$, namely, the section corresponding to the isotropic subbundle $\bigoplus_{i\in I} L_i \oplus \bigoplus_{i\not\in I} L_i^*$.  Each restriction $F_I := E|_{p_I}$ satisfies (iii), because
\begin{align*}
  (-1)^{\dim(F_I\cap F)} \gamma(F_I,F)\, c_n(F_I) &= (-1)^{|I|} \sqrt{\frac{c(F^*;\beta)}{c(F_I;\beta)}} \left( \prod_{i\in I} z_i \right)\left( \prod_{i\not\in I} \frac{-z_i}{1+\beta z_i} \right) \\
    &=  (-1)^n\,c_n(F) \left( \prod_{i\not\in I} \frac{1}{1+\beta z_i} \right)   \\
    &\qquad \times \sqrt{\prod_{i=1}^n \frac{1}{1+\beta z_i}  \prod_{i\in I}\frac{1}{1+\beta z_i} \prod_{i\not\in I}(1+\beta z_i) } \\
    &=   (-1)^n\,c_n(F) \left( \prod_{i\not\in I} \frac{1}{1+\beta z_i} \right) \sqrt{\prod_{i\in I} \frac{1}{(1+\beta z_i)^2}  } \\
    &= c_n(F^*) .
\end{align*} 
It follows that $E$ satisfies (iii).
\end{proof}

It remains to construct the class $\gamma(E,F)$.  For this, we use a pushforward formula.

\begin{thm}\label{t.push}
Let $E$ and $F$ be maximal isotropic subbundles of $V$, and let $\pi\colon \cQ(V) \to X$ be the quadric bundle.  Let $e=[\PP(E)]$ and $f=[\PP(F)]$ be the classes in $CK^{n-1}(\cQ(V))$.  Then
\begin{align*}
 \pi_*(ef) &= \begin{cases} 1 & \text{ if } \dim(E\cap F) \text{ is odd; } \\ 1-\gamma(E,F) & \text{ if } \dim(E\cap F) \text{ is even,}\end{cases}
\end{align*}
where $\gamma(E,F)$ is the canonical square root of $c(\beta)$.

That is, when $\dim(E\cap F)$ is even, $1-\pi_*(ef)$ squares to $c(V-E-F;\beta)$ and reduces to $1$ mod $\beta$.
\end{thm}

This constructs $\gamma(E,F)$ when $E\cap F$ is even-dimensional.  For the case where $\dim(E\cap F)$ is odd, take a line bundle $L$ on $X$, endow $V\oplus L\oplus L^*$ with the standard quadratic form, and consider $E'=E\oplus L$ and $F'=F\oplus L$.  Then $\dim(E'\cap F')$ is even, and by (ii) of Theorem~\ref{t.sqrt}, we have $\gamma(E,F)=\gamma(E',F')\cdot(1+\beta c_1(L))$.

\begin{proof}[Proof of Theorem~\ref{t.push}]
This is another localization argument.  As in the proof of Theorem~\ref{t.sqrt}(iii), we may assume $F=L_1\oplus \cdots \oplus L_n$, and replace $X$ by the Grassmann bundle $OG(n,V)$ and $E$ by the tautological subbundle.  The pushforward $\pi_*\colon CK^*(\cQ(V)) \to CK^*(X)$ can be computed by an Atiyah-Bott-Berline-Vergne summation.

Fix a set $I\subseteq \{1,\ldots,n\}$ with $|I|$ having the same parity as $\dim(E\cap F)$.  As before, it suffices to show that the formula holds for each restriction $F_I = E|_{p_I}$.  That is, we compute $\pi_*(f_I\cdot f)$, where $f_I = [\PP(F_I)]$.

For each $i\in \{1,\ldots,n\}$, there are points $p_i$ and $p_{-i}$ of $\cQ$, namely, the sections corresponding to the isotropic line bundles $L_i$ and $L_i^*$, respectively.  One has $p_i\in \PP(F_I)$ iff $i\in I$, and $p_{-i}\in \PP(F_I)$ iff $i\not\in I$.  The localization formula computes $\pi_*(f_I\cdot f)$ as
\[
  \pi_*(f_I\cdot f) = \sum_{i=1}^n \left( \frac{f_I|_i\cdot f|_i}{c_{2n-2}(T_i\cQ)} + \frac{f_I|_{-i}\cdot f|_{-i}}{c_{2n-2}(T_{-i}\cQ)}\right).
\]
Since $p_{-i}\not\in \PP(F)$, we have $f|_{-i}=0$ for all $i$, and similarly $f_I|_i=0$ unless $i\in I$.  The sum can therefore be written as
\[
  \pi_*(f_I\cdot f) = \sum_{i\in I} \frac{f_I|_i\cdot f|_i}{c_{2n-2}(T_i\cQ)}.
\]

The tangent bundle to $\cQ$ at $p_i$ is
\[
  T_i\cQ = \Hom(L_i,V/(L_i\oplus L_i^*)),
\]
and for $i\in I$, the normal bundle to $\PP(F_I)$ at $p_i$ is
\[
  N_i = \Hom(L_i,V/(F_I\oplus L_i^*)),
\]
It follows that
\[
   \frac{f_I|_i\cdot f|_i}{c_{2n-2}(T_i\cQ)} = \prod_{j\in I, j\neq i}\frac{-z_i-z_j-\beta z_i z_j}{(1+\beta z_j)(z_j-z_i)}.
\]
Summing over $i$, one has the formal identity
\begin{align*}
 \sum_{i\in I} \prod_{j\in I, j\neq i}\frac{-z_i-z_j-\beta z_i z_j}{(1+\beta z_j)(z_j-z_i)} &= \begin{cases} 1 &\text{ if } |I| \text{ is odd;} \\ \displaystyle{1 -\prod_{i\in I} \frac{1}{1+\beta z_i}} &\text{ if } |I| \text{ is even.} \end{cases}
\end{align*}
Since $c(V-F_I-F) = \left(\prod_{i\in I} \frac{1}{1+\beta z_i}\right)^2$, the claim is proved.
\end{proof}

We will need a relation among Chern classes of $V$, in the presence of isotropic subbundles.  Let $E$ and $F$ be maximal isotropic, and $c(u)=c(V-E-F;u)$.

\begin{lem}%\label{l.relD}
We have $c(u)\cdot c(\beta-u) = c(\beta)$.
\end{lem}

\begin{proof}
The quadratic form identifies $V/E \isom E^*$, so
\[
  c(u) = c(V-E-F;u) = \frac{c(E^*;u)}{c(F;u)} = \frac{c(F^*;u)}{c(E;u)}.
\]
Using the general formula
\[
  c(E^*;u) = \frac{c(E;\beta-u)}{c(E;\beta)},
\]
we can write
\begin{align*}
  c(u) \cdot c(\beta-u) &= \frac{c(E^*;u)}{c(F;u)} \cdot \frac{c(F^*;\beta-u)}{c(E;\beta-u)} \\
  &= \frac{c(E^*;u)}{c(F;u)}\cdot \frac{c(F;u)\,c(F^*;\beta)}{c(E^*;u)\,c(E;\beta)} \\
  &= \frac{c(F^*;\beta)}{c(E;\beta)} \\
  &= c(\beta),
\end{align*}
as claimed.
\end{proof}

\renewcommand{\theequation}{\fnsymbol{equation}}
Specializing $u=\beta/2$ gives the formula
\begin{equation}\label{e.half}
\gamma(E,F)=c(\beta/2)
\end{equation}
in $CK^*(X)[\frac{1}{2}]$.  Indeed, the lemma shows $c(\beta/2)^2=c(\beta)$, and since $c(\beta/2)$ evaluates to $1$ at $\beta=0$, it must equal $\gamma(E,F)$ by Theorem~\ref{t.sqrt}.  
(The proof of the Lemma shows that these relations hold when $V$ has a symplectic form, or indeed any nondegenerate bilinear form.  What is special about even orthogonal bundles is that the class $\gamma(E,F)$ exists {\em integrally}.)

Now we can prove the existence of Euler classes.

\begin{thm}\label{t.euler}
For any even-rank bundle with quadratic form $(V,q)$ which is trivial in the Zariski topology, there is an Euler class $\eu\in CK^n(X)[\frac{1}{2}]$.
\end{thm}

\begin{proof}
If $V$ has a maximal isotropic subbundle $E$, then $\eu=c_n(E)$ is an Euler class.  Indeed, this satisfies the defining condition: if $F$ is another maximal isotropic bundle, then $c_n(F) = \pm \gamma(E,F^*)\, c_n(E)$, so $c_n(E)$ and $c_n(F)$ agree up to units.

In general, let $p\colon OG(n,V) \to X$ be the Grassmann bundle parametrizing one family of maximal isotropic subspaces.  Since $V$ is locally trivial, this is an envelope, and there is a {\em Kimura exact sequence}
\[
  0 \to CK^*(X) \xrightarrow{p^*} CK^*(OG) \xrightarrow{p_1^*-p_2^*} CK^*(OG\times_X OG),
\]
where $p_1$ and $p_2$ are the two projections.    
Let $E\subseteq V$ be the tautological bundle on $OG$, and let $E_1=p_1^*E_1$ and $E_2=p_2^*E_2$ be the two tautological bundles on the product.  Since $c_n(E_1) = \gamma(E_1^*,E_2)\cdot c_n(E_2)$ by Theorem~\ref{t.sqrt}(iii), and
\begin{align*}
  \gamma(E_1^*,E_2) &= c(V-E_1^*-E_2;\beta/2) \\
    &= \frac{c(E_2^*;\beta/2)}{c(E_1^*;\beta/2)},
\end{align*}
the class $c(E^*;\beta/2)\cdot c_n(E)$ lies in the kernel of $p_1^*-p_2^*$, and is therefore the pullback of a class in $CK^*(X)[\frac{1}{2}]$.
\end{proof}

It is in fact necessary to invert $2$ to construct $\eu$.  For bundles $(V,q)$ which are not Zariski locally trivial, Totaro gave an example showing that there is no integral Euler class in the Chow ring \cite[Proof of Theorem 5.1]{to2}.  In $K$-theory, integrality already fails for bundles which are Zariski-locally trivial, and the reason is simpler.  The first nontrivial example shows why this is so.

\begin{ex*}[{cf.~\cite[Example, p.~293]{eg}}]
First, we observe that in the above construction, the choice of $c(E^*;\beta/2)$ as the coefficient of $c_n(E)$ on $OG$ is essentially unique, up to a unit which is pulled back from $X$.  Indeed, suppose $\alpha\in CK^0(OG)$ is a unit so that $\alpha\cdot c_n(E)$ is pulled back from $X$.  Writing $\alpha_i=p_i^*\alpha$ for $i=1,2$, after inverting $2$ we must have $\alpha_1/\alpha_2 = c(E_1^*;\beta/2)/c(E_2^*;\beta/2)$.  Equivalently, $\alpha_1/c(E_1^*;\beta/2)=\alpha_2/c(E_2^*;\beta/2)$, so by the Kimura sequence, the ratio $\alpha/c(E^*;\beta/2)$ is pulled back from $X$.

To show that inverting $2$ is necessary, it will suffice to produce an example where the Chern classes of the dual tautological bundle $E^*$ are not divisible by $2$, and cannot be made divisible after multiplying by a unit pulled back from $X$.  

Now let $X\subseteq \PP^5$ be a smooth quadric hypersurface, with tautological line bundle $U\subset \CC^6$.  We will show that the rank $4$ bundle $V=U^\perp/U$ has no integral Euler class.  As shown in \cite{eg}, $V=U^\perp/U$ has no isotropic subbundle of rank $2$.  This will also follow from our computation, since a bundle which admits a maximal isotropic subbundle does have an integral Euler class, the top Chern class of the maximal isotropic subbundle.

Choose one of the maximal orthogonal Grassmann bundles $p\colon OG(V) \to X$, and let $E\subseteq p^*V$ be the tautological rank $2$ bundle.  Observe that $X\isom Gr(2,\CC^4)$, and under this isomorphism $U\isom \exterior^2 S_2$ and $\CC^6 \isom \exterior^2\CC^4$, where $S_2\subset \CC^4$ is the tautological bundle.  The bundle $OG(V) \to X$ is isomorphic to $Fl(2,3;\CC^4) \to Gr(2,\CC^4)$, and $E$ is identified with $\exterior^2 S_3/\exterior^2 S_2$, where $S_3\supset S_2$ is the tautological rank $3$ bundle.

One has presentations
\begin{align*}
  CK^*(Gr(2,\CC^4)) &= \ZZ[\beta][c_1,c_2,d_1,d_2]/I \quad \text{ and } \\
  CK^*(Fl(2,3,\CC^4)) &= \ZZ[\beta][c_1,c_2,x_3,x_4]/J ,
\end{align*}
where $c_i=c_i(S_2^*)$, $d_i=c_i( (\CC^4/S_2)^*)$, $x_3=c_1((S_3/S_2)^*)$, and $x_4=c_1((\CC^4/S_3)^*)$, and the relations are
\begin{align*}
I &= \big((1+c_1+c_2)(1+d_1+d_2)-1 \big) \quad \text{ and } \\
J &= \big( (1+c_1+c_2)(1+x_3)(1+x_4) - 1 \big).
\end{align*}
(These are the same as the Chow or cohomology presentations, for the same reasons.  See \cite[\S IV.3]{karoubi}.)  Since $\beta$ does not appear in the relations, we can temporarily give these rings alternative gradings by setting its $\deg(\beta)=0$.  One sees that $c_1$ and $x_3$ form a $\ZZ[\beta]$-basis for $CK^1(Fl)$ (in this grading), and $c_2$, $c_1^2$, and $c_1 x_3$ form a $\ZZ[\beta]$-basis for $CK^2(Fl)$.  
Using the relations, one computes the Chern polynomial of $E^*$ as
\begin{align*}
  c(E^*;u) 
      &= 1 + (c_1+2x_3)u + (c_2 + c_1 x_3 + x_3^2)u^2 \\
      &= 1 + (c_1+2x_3)u + (2c_2-c_1^2)u^2.
\end{align*}

Returning to the usual grading on $CK^*$, let $\alpha = 1+a_1\beta + a_2\beta^2 + O(\beta^3)$ be a unit in $CK^0(Gr)[\frac{1}{2}]$.  Using the fact that $c_1$ and $x_3$ are an integral basis, if $p^*\alpha\cdot c(E^*;\beta/2)$ is to be integral, we must have $a_1=-c_1/2$.  But then
\[
  p^*\alpha\cdot c(E^*;\beta/2) = 1 + x_3\,\beta + \left(\frac{1}{2}c_2-\frac{1}{2}c_1^2-\frac{1}{2}c_1\,x_3 + p^*a_2 \right) \beta^2 + O(\beta^3),
\]
which cannot be integral for any choice of $a_2\in CK^2(Gr)$, since the classes $c_2$, $c_1^2$, and $c_1 x_3$ are a basis for $CK^2(Fl)$ and $p^*a_2$ lies in the span of $c_2$ and $c_1^2$.
\end{ex*}

%\newpage

%%%%%%%%%%%%%%%%%%%%%%%%%%%%%%%%%%%%%%%%

%%%%%%%%%%%%%%%%%%%%%%%%%%%%%%%%%%%%%%%%

%%%%%%%%%%%%%%%%%%%%%%%%%%%%%%%%%%%%%%%%
\end{document}